\DeclareMathAlphabet{\mathscrbf}{OMS}{mdugm}{b}{n}
\DeclareMathAlphabet{\mathpzc}{LS1}{stixscr}{m}{n}
\newcommand{\leqnomode}{\tagsleft@true\let\veqno\@@leqno}
\newcommand{\reqnomode}{\tagsleft@false\let\veqno\@@eqno}
\tikzset{anchorbase/.style={baseline={([yshift=-0.5ex]current bounding box.center)}}}
\tikzstyle directed=[postaction={decorate,decoration={markings,
mark=at position #1 with {\arrow{>}}}}]
\tikzstyle rdirected=[postaction={decorate,decoration={markings,
mark=at position #1 with {\arrow{<}}}}]
\tikzset{
partial ellipse/.style args={#1:#2:#3}{
insert path={+ (#1:#3) arc (#1:#2:#3)}
}
}
\newcommand{\setword}[2]{%
\phantomsection
#1\def\@currentlabel{\unexpanded{#1}}\label{#2}%
}
\newcommand{\neatfrac}[2]{{}^{#1}\! / \!{}_{#2}}
\newcommand{\bigslant}[2]{{\raisebox{.2em}{$#1$}\!\left/\raisebox{-.2em}{$#2$}\right.}}
\newcommand{\overunder}[3]{\raisebox{-.01cm}{$
\xy
(0,1.9)*{{\scriptstyle#1}};
(0,0)*{#3};
(0,-1.2)*{{\scriptstyle#2}};
\endxy$}
}
\providecommand{\leftsquigarrow}{%
\mathrel{\mathpalette\reflect@squig\relax}%
}
\newcommand{\reflect@squig}[2]{%
\reflectbox{$\m@th#1\rightsquigarrow$}%
}
\def\mystrut(#1,#2){\vrule height #1 depth #2 width 0pt}
\newcolumntype{C}[1]{%
>{\mystrut(1ex,1ex)\centering}%
p{#1}%
<{}}
\newcolumntype{L}[1]{%
>{\mystrut(1ex,1ex)}%
p{#1}%
<{}}
\definecolor{mycolor}{rgb}{0.9,0,0}
\definecolor{colormy}{rgb}{0.75,0,0}
\definecolor{nonecolor}{rgb}{0.9,0,0.9}
\definecolor{cupgray}{gray}{0.5}
\definecolor{paragray}{gray}{0.3}
\definecolor{myblue}{rgb}{0,0,0.8}
\definecolor{mygreen}{rgb}{0,0.7,0}
\definecolor{myred}{rgb}{0.9,0,0}
\definecolor{myyellow}{rgb}{0.9,0.9,0}
\definecolor{CMWcolor}{rgb}{0.8,0.2,0}
\definecolor{orchid}{RGB}{143,40,194}
\definecolor{lava}{RGB}{207,16,32}
\definecolor{mydarkblue}{RGB}{10,10,150}
\newcommand{\neatspec}[2]{\raisebox{.1cm}{$\underset{{\color{paragray}(#1)}}{#2}$}}
\newcommand{\C}{\mathbb{C}}
\newcommand{\Z}{\mathbb{Z}}
\newcommand{\R}{\mathbb{R}}
\newcommand{\osymbol}{\mathrm{o}}
\newcommand{\psymbol}{\mathrm{p}}
\newcommand{\parepi}{\boldsymbol{\tau}_{\osymbol}}
\newcommand{\parep}{\boldsymbol{\tau}_{\psymbol}}
\newcommand{\parom}{\boldsymbol{\omega}_+}
\newcommand{\parome}{\boldsymbol{\omega}_-}
\newcommand{\parto}{\boldsymbol{\alpha}}
\newcommand{\parha}{\boldsymbol{\beta}}
\newcommand{\parsign}{\boldsymbol{\varepsilon}}
\newcommand{\paro}{\boldsymbol{\omega}}
\newcommand{\parameter}{\mathtt{P}}
\newcommand{\parameterS}{\mathtt{Q}}
\newcommand{\specs}{\mathtt{p}}
\newcommand{\specS}{\mathtt{q}}
\newcommand{\F}{\boldsymbol{\mathfrak{F}}}
\newcommand{\Ffno}{{\boldsymbol{\mathcal{F}}}}
\newcommand{\Fker}{{\Ffno^{\mathrm{ker}}}}
\newcommand{\Ff}{\tilde{\boldsymbol{\mathcal{F}}}}
\newcommand{\field}{P}
\newcommand{\ring}{Q}
\newcommand{\sTQFT}{\mathcal{Z}}
\newcommand{\Mo}{\mathcal{A}}
\newcommand{\M}{\mathcal{W}}
\newcommand{\fmod}{\field\text{-}\boldsymbol{\mathcal{M}\mathrm{od}}^{\mathrm{fin}}_{\mathrm{free}}}
\newcommand{\newmod}{\field\text{-}\boldsymbol{\mathcal{M}\mathrm{od}}}
\newcommand{\oneinsert}[1]{\boldsymbol{1}_{#1}}
\DeclareRobustCommand{\sltwo}[1][2]{\mathfrak{sl}_{#1}}
\DeclareRobustCommand{\gltwo}[1][2]{\mathfrak{gl}_{#1}}
\newcommand{\pmandnot}{\star}
\newcommand{\webalg}{\mathfrak{W}}
\newcommand{\webalgpm}{\mathfrak{W}^\pm}
\newcommand{\webalgboth}{\mathfrak{W}^\pmandnot}
\newcommand{\webcatpm}{\mathfrak{W}^\pm[\parameter]\text{-}\boldsymbol{\mathcal{B}\mathrm{im}}^p_{\mathrm{gr}}}
\newcommand{\webcatS}{\mathfrak{W}[\parameterS]\text{-}\boldsymbol{\mathcal{B}\mathrm{im}}^p_{\mathrm{gr}}}
\newcommand{\based}{\flat}
\newcommand{\webcatcirc}{\mathfrak{W}_{\based}[\parameter]\text{-}\boldsymbol{\mathcal{B}\mathrm{im}}^p_{\mathrm{gr}}}
\newcommand{\webcatScirc}{\mathfrak{W}_{\based}[\parameterS]\text{-}\boldsymbol{\mathcal{B}\mathrm{im}}^p_{\mathrm{gr}}}
\newcommand{\webcatcircpm}{\mathfrak{W}_{\based}^\pm[\parameter]\text{-}\boldsymbol{\mathcal{B}\mathrm{im}}^p_{\mathrm{gr}}}
\newcommand{\webcatScircpm}{\mathfrak{W}_{\based}^\pm[\parameterS]\text{-}\boldsymbol{\mathcal{B}\mathrm{im}}^p_{\mathrm{gr}}}
\newcommand{\webcatcircboth}{\mathfrak{W}_{\based}^\pmandnot[\parameter]\text{-}\boldsymbol{\mathcal{B}\mathrm{im}}^p_{\mathrm{gr}}}
\newcommand{\webcatcircgl}{\mathfrak{W}_{\based}^\pm[\gltwo]\text{-}\boldsymbol{\mathcal{B}\mathrm{im}}^p_{\mathrm{gr}}}
\newcommand{\webcatcircglpm}{\mathfrak{W}_{\based}^\pm[\gltwo]\text{-}\boldsymbol{\mathcal{B}\mathrm{im}}^p_{\mathrm{gr}}}
\newcommand{\somealg}{\mathrm{A}}
\newcommand{\someotheralg}{\mathrm{B}}
\newcommand{\web}{\mathrm{w}}
\newcommand{\cc}[1]{\mathrm{a}(#1)}
\newcommand{\CUP}{\mathrm{CUP}}
\newcommand{\CUPB}{\mathrm{Cup}}
\newcommand{\basedd}{\circ}
\newcommand{\CUPbasis}[3]{{}_{#1}\mathbb{B}^{\basedd}(#3)_{#2}}
\newcommand{\CUPBasis}[1]{\mathbb{B}^{\basedd}(#1)}
\newcommand{\Cupbasis}[3]{{}_{#1}\mathbb{B}(#3)_{#2}}
\newcommand{\CupBasis}[1]{\mathbb{B}(#1)}
\newcommand{\stacked}{D}
\newcommand{\stackedd}{\tilde D}
\newcommand{\coeff}{\mathrm{coeff}}
\newcommand{\coeffb}{\overline{\mathrm{coeff}}}
\newcommand{\clapping}{\mathrm{cl}}
\newcommand{\Modpgr}[1]{#1\text{-}\boldsymbol{\mathcal{B}\mathrm{im}}^p_{\mathrm{gr}}}
\newcommand{\Y}{\mathbbm{bl}}
\newcommand{\bY}{\mathbbm{bl}_{\diamond}}
\newcommand{\Mod}[1]{#1\text{-}\boldsymbol{\mathcal{M}\mathrm{od}}}
\newcommand{\biMod}[1]{#1\text{-}\boldsymbol{\mathcal{B}\mathrm{im}}}
\newcommand{\Iso}[2]{\Phi_{#1}^{#2}}
\newcommand{\Isonew}[2]{\Psi_{#1}^{#2}}
\newcommand{\Hom}{\mathrm{Hom}}
\newcommand{\twoHom}{\mathrm{H}\mathrm{om}}
\newcommand{\End}{\mathrm{End}}
\newcommand{\twoEnd}{\mathcal{E}\mathrm{nd}}
\newcommand{\Kar}{\mathcal{K}\mathrm{ar}}
\newcommand{\down}{{\scriptstyle\vee}}
\newcommand{\up}{{\scriptstyle\wedge}}
\newcommand{\block}{\mathtt{bl}}
\newcommand{\bblock}{\mathtt{bl}_{\diamond}}
\newcommand{\length}{\mathrm{d}}
\newcommand{\Arcalg}{\mathfrak{A}[\parameterS]}
\newcommand{\ArcalgS}{\mathfrak{A}}
\newcommand{\compmatch}{\vec{\Lambda}}
\newcommand{\compmatcht}{\vec{\mathrm{t}}}
\newcommand{\op}{\operatorname}
\newcommand{\pos}{\mathrm{p}}
\newcommand{\dummy}{{\scriptstyle\bigstar}}
\newcommand{\factor}{\chi}
\newcommand{\KBN}{\boldsymbol{\mathrm{KBN}}}
\newcommand{\CMW}{\boldsymbol{\mathrm{CMW}}}
\newcommand{\Ca}{\boldsymbol{\mathrm{Ca}}}
\newcommand{\Bl}{\boldsymbol{\mathrm{Bl}}}
\newcommand{\Hgensl}[1]{\left\llbracket #1 \right\rrbracket_{\sltwo}}
\newcommand{\Hgenslo}[1]{\overline{\left\llbracket #1 \right\rrbracket}_{\sltwo}}
\newcommand{\Hgengl}[1]{\left\llbracket #1 \right\rrbracket_{\gltwo}}
\newcommand{\Hgenslpm}[1]{\left\llbracket #1 \right\rrbracket^{\pm}_{\sltwo}}
\newcommand{\Hgenglpm}[1]{\left\llbracket #1 \right\rrbracket^{\pm}_{\gltwo}}
\newcommand{\Hgenpm}[1]{\left\llbracket #1 \right\rrbracket^{\pm}_{\parameter}}
\newcommand{\HKBNpm}[1]{\left\llbracket #1 \right\rrbracket^{\pm}_{\KBN}}
\newcommand{\HCMWpm}[1]{\left\llbracket #1 \right\rrbracket^{\pm}_{\CMW}}
\newcommand{\HCapm}[1]{\left\llbracket #1 \right\rrbracket^{\pm}_{\Ca}}
\newcommand{\HBlpm}[1]{\left\llbracket #1 \right\rrbracket^{\pm}_{\Bl}}
\newcommand{\Hgenboth}[1]{\left\llbracket #1 \right\rrbracket^{\pmandnot}_{\parameter}}
\newcommand{\HKBNboth}[1]{\left\llbracket #1 \right\rrbracket^{\pmandnot}_{\KBN}}
\newcommand{\HCMWboth}[1]{\left\llbracket #1 \right\rrbracket^{\pmandnot}_{\CMW}}
\newcommand{\HCaboth}[1]{\left\llbracket #1 \right\rrbracket^{\pmandnot}_{\Ca}}
\newcommand{\HBlboth}[1]{\left\llbracket #1 \right\rrbracket^{\pmandnot}_{\Bl}}
\newcommand{\HOboth}[1]{\left\llbracket #1 \right\rrbracket^{\pmandnot}_{\mathcal{O}}}
\newcommand{\Hgen}[1]{\left\llbracket #1 \right\rrbracket_{\parameter}}
\newcommand{\HGen}[1]{\left\llbracket #1 \right\rrbracket_{\parameterS}}
\newcommand{\HKBN}[1]{\left\llbracket #1 \right\rrbracket_{\KBN}}
\newcommand{\Kom}{\boldsymbol{\mathcal{C}}^{b}}
\newcommand{\Komh}{\boldsymbol{\mathcal{K}}^{b}}
\newcommand{\twoKomh}{\boldsymbol{\mathfrak{K}}^{b}}
\newcommand{\Tan}{\boldsymbol{\mathcal{T}\mathrm{an}}^\pm}
\newcommand{\twoTan}{\boldsymbol{\mathfrak{T}\mathrm{an}}^\pm}
\newcommand{\twoTanup}{\boldsymbol{\mathfrak{T}\mathrm{an}}}
\newcommand{\ecup}{
\xy
(0,0)*{\includegraphics[scale=1]{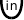}};
\endxy
}
\newcommand{\ecups}{
\xy
(0,0)*{\includegraphics[scale=.75]{figs/fig0-3.pdf}};
\endxy
}
\newcommand{\icup}{
\xy
(0,0)*{\includegraphics[scale=1]{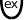}};
\endxy
}
\newcommand{\icups}{
\xy
(0,0)*{\includegraphics[scale=.75]{figs/fig0-1.pdf}};
\endxy
}
\newcommand{\ecap}{
\xy
(0,0)*{\includegraphics[scale=1]{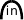}};
\endxy
}
\newcommand{\ecaps}{
\xy
(0,0)*{\includegraphics[scale=.75]{figs/fig0-4.pdf}};
\endxy
}
\newcommand{\icap}{
\xy
(0,0)*{\includegraphics[scale=1]{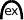}};
\endxy
}
\newcommand{\icaps}{
\xy
(0,0)*{\includegraphics[scale=.75]{figs/fig0-2.pdf}};
\endxy
}
\newcommand{\eleft}{
\xy
(0,0)*{\includegraphics[scale=1]{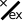}};
\endxy
}
\newcommand{\elefts}{
\xy
(0,0)*{\includegraphics[scale=.75]{figs/fig0-6.pdf}};
\endxy
}
\newcommand{\ileft}{
\xy
(0,0)*{\includegraphics[scale=1]{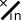}};
\endxy
}
\newcommand{\ilefts}{
\xy
(0,0)*{\includegraphics[scale=.75]{figs/fig0-8.pdf}};
\endxy
}
\newcommand{\eright}{
\xy
(0,0)*{\includegraphics[scale=1]{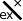}};
\endxy
}
\newcommand{\erights}{
\xy
(0,0)*{\includegraphics[scale=.75]{figs/fig0-5.pdf}};
\endxy
}
\newcommand{\iright}{
\xy
(0,0)*{\includegraphics[scale=1]{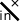}};
\endxy
}
\newcommand{\irights}{
\xy
(0,0)*{\includegraphics[scale=.75]{figs/fig0-7.pdf}};
\endxy
}
\newtheorem{theoremm}{Theorem}[section]
\declaretheorem[style=plain,name=Theorem,numbered=no]{theoremintro}
\declaretheorem[style=plain,name=Corollary,qed=$\square$,numbered=no]{corollaryintro}
\declaretheorem[style=plain,name=Theorem,numberlike=theoremm]{theorem}
\declaretheorem[style=plain,name=Lemma,numberlike=theoremm]{lemma}
\declaretheorem[style=plain,name=Proposition,numberlike=theoremm]{proposition}
\declaretheorem[style=plain,name=Lemma,qed=$\square$,numberlike=theoremm]{lemmaqed}
\declaretheorem[style=plain,name=Proposition,qed=$\square$,numberlike=theoremm]{propositionqed}
\declaretheorem[style=plain,name=Corollary,qed=$\square$,numberlike=theoremm]{corollary}
\declaretheorem[style=definition,name=Definition,numberlike=theorem]{definition}
\declaretheorem[style=remark,name=Example,numberlike=theorem]{example}
\declaretheorem[style=remark,name=Remark,numberlike=theorem]{remark}
\numberwithin{equation}{section}
\let\fullref\autoref
\def\makeautorefname#1#2{\expandafter\def\csname#1autorefname\endcsname{#2}}
\begin{document}
\vbadness=10001
\hbadness=10001
\overfullrule=1mm
\title[Generic $\gltwo$-foams, web and arc algebras]{Generic $\gltwo$-foams, web and arc algebras}
\author[M. Ehrig, C. Stroppel and D. Tubbenhauer]{Michael Ehrig, Catharina Stroppel and Daniel Tubbenhauer}

\address{M.E.: Beijing Institute of Technology, School of Mathematics and Statistics, Liangxiang Campus of Beijing Institute of Technology, Fangshan District, 100488 Beijing, China}
\email{michael.ehrig@outlook.com}

\address{C.S.: Mathematisches Institut, Universit\"at Bonn, Endenicher Allee 60, Room 4.007, 53115 Bonn, Germany}
\email{stroppel@math.uni-bonn.de}

\address{D.T.: Institut f{\"u}r Mathematik, Universit{\"a}t Z{\"u}rich, Winterthurerstrasse 190, Campus Irchel, Office Y27J32, CH-8057 Z{\"u}rich, Switzerland, \href{www.dtubbenhauer.com}{www.dtubbenhauer.com}}
\email{daniel.tubbenhauer@math.uzh.ch}

\begin{abstract}
We define parameter dependent 
$\gltwo$-foams and their 
associated web and arc algebras, and verify 
that they specialize to several known 
$\sltwo$ or $\gltwo$ 
constructions related to higher link and tangle invariants. 
Moreover, we show that all these specializations are equivalent, and 
we deduce several applications, e.g. we discuss the consequences 
for the associated link and tangle invariants, and their functoriality.
\end{abstract}

\maketitle
\tableofcontents
\section{Introduction}\label{sec:intro}
Let $\parameter=(\parto,\parha,\parepi,\parep,\parom,\parome)$ 
be a tuple of $6$ generic parameters with the last 
four $\parepi,\parep,\parom,\parome$ being invertible.
In this paper we introduce 
a $\parameter$-version of 
a two-dimensional \textit{singular topological quantum field theory (TQFT)} 
with the purpose to define a $6$\textit{-parameter} 
\textit{foam} $2$\textit{-category} 
$\F^\pm[\parameter]$ that unifies several topological variants of Khovanov homology.
By specialization of $\parameter$ we obtain for instance foam $2$-categories 
known from the context 
of higher link and tangle invariants associated 
with the quantum groups for $\sltwo$ or $\gltwo$:
\begin{gather}\label{eq:main-specs}
\renewcommand{\arraystretch}{1.3}
\begin{tabular}{L{11.5cm}}
$\KBN\!\!:\,$ Khovanov/Bar-Natan's 
cobordisms \cite{Kho}, \cite{BN1} by specializing 
$\parameter$ 
to $(0,0,1,1,1,1)$,
\tabularnewline
$\Ca\!:\,$ Caprau's foams \cite{Cap},\cite{Cap2} 
by specializing $\parameter$ 
to $(\parto,\parha,1,1,i,-i)$,
\tabularnewline
$\CMW\!\!:\,$ Clark--Morrison--Walker's disoriented 
cobordisms \cite{CMW}
by specializing $\parameter$ 
to $(0,0,1,1,i,-i)$,
\tabularnewline
$\Bl\!:\,$ Blanchet's foams \cite{Bla} 
by specializing $\parameter$ 
to $(0,0,1,-1,1,-1)$.
\tabularnewline
\end{tabular} 
\end{gather}

Our $6$-parameter version can in particular be used to 
commonly treat and compare all the theories in \eqref{eq:main-specs}.  
The $6$ parameters play quite different roles in the theory: 
\begin{enumerate}[label=(\roman*)]

\item \label{enum:intro-a} $\parto$ and $\parha$ deform the Frobenius 
algebra $\C[X]/(X^2)$ involved in the TQFT to a possibly semisimple algebra.

\item \label{enum:intro-b} $\parepi$ encodes different 
choices of traces of this Frobenius algebra.

\item \label{enum:intro-c} The remaining three parameters interpolate between the 
$\sltwo$ and the $\gltwo$ theory.
\end{enumerate}

As a consequence of our considerations we 
obtain that for fixed deformation parameters 
$\parto$ (arbitrary) and $\parha=0$, the 
specializations $(1,1,1,1)$, $(1,1,i,-i)$, $(1,-1,1,-1)$ 
for the remaining parameters are equivalent 
(see \fullref{corollary:all-equivalent}); in particular, the 
theories from \eqref{eq:main-specs} agree. 
That means not only that the tangle invariants are isomorphic, 
but stronger that the categories used for these 
theories are equivalent. 

We focus in this paper on the most interesting case $\parha=0$ avoiding 
the straightforward lengthy calculations in general. 
In fact, by working with a more suitable basis, the calculations 
can be significantly reduced covering the case 
$\parha\neq 0$ as well, as was shown in \cite{BHPW}.

One main player in our construction is the topologically defined 
\textit{web algebra} $\webalg^\pm[\parameter]$ 
corresponding to $\F^\pm[\parameter]$. The web algebra 
comes with a $2$-category of 
certain bimodules providing a fully faithful $2$-representation 
of $\F^\pm[\parameter]$. The second main player is 
the algebraically defined \textit{arc algebra}, 
which is a parameter version of Khovanov's 
arc algebra, \cite{Khov}, \cite{BS1}.  We consider 
this algebra for the specialization 
$\parameterS=(\parto,0,1,\parsign\paro^2,\paro,\parsign\paro)$ 
of $\parameter$ with $\paro$ generic and $\parsign\in\{\pm 1\}$. 
For this specialized algebra $\Arcalg$, we 
relate directly the theories from \eqref{eq:main-specs} 
and at the same time encode algebraically/combinatorially 
the topological data coming from foams. Our main result can be formulated (slightly informally) as follows: 

\makeautorefname{theorem}{Theorems}

\begin{theoremintro}(See  \fullref{theorem:matchalgebras}, 
\ref{theorem:crazy-isomorphism2} and \ref{theorem:matchalgebrasold}.)
The following holds: 
\begin{enumerate}[label=(\alph*)]

\item The web algebra specialized at $\parameterS$ is isomorphic to the arc algebra $\Arcalg$. The associated 
bimodule $2$-categories are equivalent.

\item $\Arcalg$ is a free 
$\Z[\parto,\parsign,\paro^\pm]$-algebra 
whose specializations $\Arcalg\otimes_\Z R$ 
to $(\parto,\parsign,\paro)\in R^3$ for some commutative ring $R$ 
with $\paro$ invertible and $\parsign\in\{\pm 1\}$ are all isomorphic 
as graded algebras. Similarly for the web algebra.

\end{enumerate}
\end{theoremintro}

\makeautorefname{theorem}{Theorem}

We want to emphasize that our approach 
provides \textit{explicit} isomorphisms which 
are moreover compatible with the simultaneous
specialization of $\parto$ on both side of the 
isomorphism to any element in $R$. (One can check that 
the corresponding statements also hold for general 
fixed $\parha$. We will not consider this 
situation here, but refer to \cite{BHPW} instead.) 
These isomorphisms induce explicit 
isomorphisms of the corresponding categories of 
graded bimodules, in particular of those which are the values of the  
higher tangle invariants. As a byproduct, we derive equivalences 
of graded, $\Z[\parto,\parsign,\paro^\pm]$-linear $2$-categories
\begin{gather}\label{eq:main-equi}
\Modpgr{\Arcalg}\cong\F[\parameterS]
\text{ and } 
\F[\ast]\cong\Modpgr{\ArcalgS[\ast]}.
\end{gather}
This also holds for any further simultaneous specialization of $\parto$. It implies in particular

\begin{corollaryintro}(See \fullref{corollary:all-equivalent}.)
The $\KBN$, $\Ca$, $\CMW$ and $\Bl$ 
foam $2$-categories, obtained via specialization as in \eqref{eq:main-specs},
are all equivalent when one works over $\Z[i]$.
\end{corollaryintro}

As an application we construct in \fullref{sec:further} 
invariants of tangles with values in the homotopy category 
of graded bimodules over the corresponding arc algebras 
for the $\gltwo$- and  $\sltwo$-specializations 
$(\parto,\parha,1,-1,\parsign\parom^2,\parom,-\parom^{-1})$
respectively 
$(\parto,\parha,1,1,\parsign\parom^2,\parom,\parom^{-1})$. 
An interesting question hereby is whether 
these constructions are \textit{functorial} in 
the sense that it extends to an invariant of tangles and 
cobordisms between tangles. We show in \fullref{theorem:functor-gl2} 
that the $\gltwo$-specializations always 
give functorial invariants, while the $\sltwo$-specializations 
fail to do so in the naive construction 
(as was already shown in \cite{Jac}, see also \cite{Str-TQFTs}).

The pure existence of our isomorphisms and equivalences might also be deduced from more abstract arguments, which however are not suitable to establish an explicit isomorphism.
Using arguments as in \cite[Section 5.3, Proposition 5.18]{MPT} or \cite[Section 4]{Tub1}
one can show that $\webalg_{\C}[\KBN]$ is Morita equivalent 
to a certain KLR algebra of level $2$ 
(using $\C$ as a ground field). 
Hereby, the two main ingredients in the corresponding proof in \cite{MPT}
were a categorification of an instance of $q$-Howe duality
as well as Rouquier's universality 
theorem \cite[Proposition 5.6 and Corollary 5.7]{Rou} 
(``such categorifications are unique'').

On the other hand, it was shown in \cite[Theorem 9.2]{BS3} 
that another instance of $q$-Howe duality can be categorified using 
$\ArcalgS_{\C}[\KBN]$. Moreover, one can deduce 
from \cite[Propositions 3.5 and 3.3]{LQR}---in the light 
of \fullref{proposition:cats-are-equal}---the same for 
$\webalg_{\Z[i]}[\CMW]$ and $\webalg_{\Z}[\Bl]$.
Since $\ArcalgS_{\C}[\KBN]$ is constructed 
from $\webalg_{\C}[\KBN]$, ``uniqueness of 
categorification in type $\mathrm{A}$''
would yield 
our isomorphism theorems for this very special case.

However, we should stress that there is still work 
to be done for the remaining theories, since it is only known 
for $\ArcalgS_{\C}[\KBN]$ that it categorifies the corresponding 
highest weight module of the ``Howe dual'' quantum group, 
and one would need to check the same for 
the $\Ca$, $\CMW$, $\Bl$ setups. In the general $6$-parameter 
setting the categorification results have still 
to be established. The abstract argumentation also 
misses the important different roles played by the deformation parameters.

Finally, it would be interesting to compare our results to \cite{Vog}, or to analyze the case of $\gltwo[n]$ and $\sltwo[n]$ web algebras as studied for example in \cite{Ma-sln-web-algebras} or \cite{Tub}. We expect that there are also sign adjustments for the odd arc algebras from \cite{NaVa} or for skein lasagna modules, \cite[Theorem 1.1]{MaNe} for $N=2$. One might like to reformulate the results of our paper in terms of the language developed in \cite{KhKi}, \cite{Kh20}.
\subsection{Conventions}\label{subsec:conventions}
By a \textit{ring} $R$ we mean
a commutative, unital ring without zero divisors. 
Given $R$-algebras $A$ and $B$ (not necessarily unital or free over $R$) we denote by $\Mod{\somealg}$ and by
by $\biMod{\somealg\text{-}\someotheralg}$ the category of $\somealg$-modules
$\somealg$-$\someotheralg$-bimodules which are free over $R$, respectively. An $\somealg$-$\someotheralg$-bimodule 
$M$ is \textit{biprojective}, if 
it is projective both as a left $\somealg$-module and as a right $\someotheralg$-module. \textit{Graded} 
means always $\Z$\textit{-graded}. Given graded algebras $\somealg$ and $\someotheralg$, 
we will consider the category of \textit{graded} 
$\somealg$-$\someotheralg$-bimodules
$M=\bigoplus_{i\in\Z}M_i$ with degree preserving 
morphisms denoted by $\Hom_{0}$. It comes with 
the grading shifting functor $\langle s\rangle$ such 
that $M\langle s\rangle_i=M_{i-s}$ for $s\in\Z$, and 
grading forgetting functor $M\mapsto\overline M$ to 
$\biMod{\somealg\text{-}\someotheralg}$. In particular,
\begin{gather}\label{eq:degreehom}
\Hom_{\biMod{\somealg\text{-}\someotheralg}}(\overline M,\overline N)=
{\textstyle\bigoplus_{s\in\Z}}\;
\Hom_{0}(M,N\langle s\rangle).
\end{gather}
Hence instead of working with graded modules, grading shifts and degree zero morphisms one could instead not allow grading shifts, but work with morphisms of arbitrary degrees, hence in the setting of graded categories. We use then analog conventions for graded categories (that is, categories enriched in graded modules), and graded $2$-categories 
are categories enriched in graded categories. We also use various sub- and superscripts on these, as already done in the introduction. Following the above points we view graded categories are often identified with categories with a free $\mathbb{Z}$-action (i.e. instead of enriching the morphisms one extends the set of objects), see e.g \cite[Section 2.1]{MaOvSt-koszul-functors} and references therein.

We will consider three diagrammatic calculi: 
webs, foams and arc diagrams. 
Our reading convention for all of these 
is from bottom to top and left to right. 
Diagrammatic right 
respectively left actions will be given by acting on the 
bottom respectively on the top.
We tend to simplify our illustrations by e.g. omitting 
labels and using colors instead to distinguish the type of edges and facets in webs and foams.
For the readers with a gray-scale version we illustrate colored web edges using 
dashed lines and colored foam facets as shaded.

We treat the proofs which require a longer line of argument separately in \fullref{sec:mainproofs}.

\medskip
\noindent\textbf{Acknowledgments:} 
We like to thank Jonathan Comes, Jonathan Grant, 
Martina Lanini, David Rose, Pedro Vaz, Paul Wedrich 
and Arik Wilbert for helpful comments and discussions, 
and the referees for useful suggestions. 
Special thanks to the bars of Cologne for helping to 
write down one of the main isomorphisms of this paper.
\section{Foams, web and arc algebras with parameters}\label{sec:foams}
The construction in this section is a $6$-parameter version of 
\cite{EST} (which is inspired by earlier work e.g.
\cite{Khova} and \cite{Bla}). We follow closely the setup therein; 
in particular, all conventions not involving parameters are the same as in \cite{EST}.
\subsection{Webs and prefoams}\label{subsec:foams}
Let $\parameter=(\parto,\parha,\parepi,\parep,\parom,\parome)$ 
be as in the introduction. We set 
$\field=\Z[\parto,\parha,\parepi^{\pm 1},\parep^{\pm 1},\parom^{\pm 1},\parome^{\pm 1}]$ as a graded ring, by putting all $6$ parameters in degree 
zero except of $\mathrm{deg}_{\field}(\parto)=4$ and $\mathrm{deg}_{\field}(\parha)=2$.

We fix the following commutative Frobenius algebras with trace map and comultiplication:
\begin{gather}\label{eq:frob-algebras}
\begin{array}{llcc}
\mathrm{F}_{\osymbol}=\bigslant{\field[X]}{I},
&\mathrm{tr}_{\osymbol}(1)=0,&\Delta_{\osymbol}(1)
=\parepi^{-1}(1\otimes X + X\otimes 1) - \parha 
1\otimes 1,
\\
&{\mathrm{tr}_{\osymbol}(X)=\parepi,}
&\Delta_{\osymbol}(X)=\parepi^{-1} X\otimes X +\parto 1\otimes 1.
\\
\mathrm{F}_{\psymbol}=\field&\mathrm{tr}_{\psymbol}(1)=\parep
&\Delta_{\psymbol}(1)=\parep^{-1} 1\otimes 1.\\
\end{array}
\end{gather}
Here $I$ is the ideal $(\parepi^{-2}X^2-\parto-\parha\parepi^{-1}X)$.

Each of these algebras defines an ordinary $2$-dimensional TQFT 
in the sense of \cite[Section 1.3]{Kock}. 
As in \cite[Section 2]{EST} we want to glue now these different TQFTs to 
obtain a singular TQFT. For that recall that a 
\textit{(topological) web} is a piecewise linear, plane, one-dimensional CW complex, equipped with a $\{1,2\}$-labeling and an orientation 
such that each interior point of it has a local neighborhood 
of the following form: (The outer frame is for illustration only.) 
\begin{gather}\label{eq:local-webs}
\raisebox{0.075cm}{\xy
(0,0)*{\includegraphics[scale=.65]{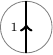}};
\endxy}
\quad,\quad
\xy
(0,0)*{\includegraphics[scale=.65]{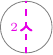}};
\endxy
\quad,\quad
\xy
(0,0)*{\includegraphics[scale=.65]{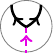}};
\endxy
\quad,\quad
\xy
(0,0)*{\includegraphics[scale=.65]{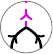}};
\endxy
\end{gather} 
By convention, this includes the 
\textit{empty web} $\emptyset$. A web is \textit{closed}, if its boundary is empty. 
Edges labeled $1$ are called \textit{ordinary edges}, 
whereas edges labeled $2$ are \textit{phantom edges} 
and drawn dashed (and colored). We use standard 
topological terms for the ordinary parts, for instance 
\textit{circles in webs}, are circles after we ignored 
all phantom edges. 
To define morphisms between webs recall the notion of a 
\textit{prefoam} from \cite[Definition~2.3]{EST}. These are certain 
singular surfaces (with dots on facets) such that each 
interior point has a local neighborhood 
of one of the four forms 
\begin{gather}\label{eq:orientation}
\raisebox{0.01cm}{\xy
(0,0)*{\includegraphics[scale=.65]{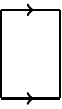}};
\endxy}
\quad,\quad
\raisebox{0.01cm}{\xy
(0,0)*{\includegraphics[scale=.65]{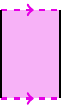}};
\endxy}
\quad,\quad
\reflectbox{\raisebox{0.00cm}{\xy
(0,0)*{\includegraphics[scale=.65]{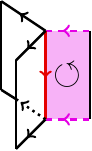}};
\endxy}}
\quad,\quad
\raisebox{0.00cm}{\xy
(0,0)*{\includegraphics[scale=.65]{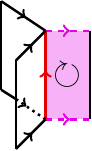}};
\endxy}
\colon
\raisebox{0.00cm}{\xy
(0,0)*{\includegraphics[scale=.65]{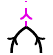}};
\endxy}
\to
\raisebox{0.00cm}{\xy
(0,0)*{\includegraphics[scale=.65]{figs/fig2-1e.pdf}};
\endxy}
\end{gather}
where we denoted as an example on the right the boundary webs at the bottom and top.
The first picture on the left displays an ordinary facet, the second a phantom facet.
The closed webs and prefoams form a monoidal category which we denote by $\Ff$.

In order to define a monoidal 
functor $\sTQFT[\parameter]\colon\Ff\to\newmod$ with 
values in $\field$-modules we use the so-called 
\textit{universal construction} from \cite[Section 1.A]{BHMV} which we 
briefly summarize. Given closed webs $v,w$ let $\Hom_{\field}(v,w)$ 
be the free $\field$-module with basis given by the set of prefoams from $v$ 
to $w$. This defines a functor $\Hom_{\field}(\emptyset,{}_-)\colon\Ff\to\newmod$. 
For any fixed closed web $w$, composition in $\Ff$ extends to a 
$\field$-linear map $$\op{m}_w\colon\Hom_{\field}(\emptyset,w)\otimes_P\Hom_{\field}(w,\emptyset)
\to\mathrm{End}_{\field}(\emptyset):=\Hom_{\field}(\emptyset,\emptyset),\quad f\otimes g\mapsto g\circ f.$$ 
Given now a $\field$-linear \textit{evaluation map}
$\mathsf{ev}\colon\mathrm{End}_{\field}(\emptyset)\to\field$ we let the \textit{radical} 
associated with $w$ be
$\mathrm{rad}_w=\{f\in\Hom_{\field}(\emptyset,W)\mid\mathsf{ev}\circ\op{m}_w(g\otimes f)=0
\text{ for any }g\in\Hom_{\field}(w,\emptyset)\}$. Then we obtain a functor 
$\sTQFT[\parameter]\colon\Ff\to\newmod$ defined on objects 
by $w\mapsto\Hom_{\field}(\emptyset,w)/\mathrm{rad}_w$ and on morphisms 
by $f\mapsto f\circ{}_-$. 
(Note that this is indeed well-defined, 
because if $f\in\mathrm{rad}_v$ and $h\in\Hom_{\field}(v,w)$, 
then $h\circ f\in\mathrm{rad}_w$, since $g\circ h\in\Hom_{\field}(v,\emptyset)$ 
for any $g\in\Hom_{\field}(w,\emptyset)$.)

We apply now this universal construction to a 
special choice of $\mathsf{ev}$. Namely, we take the 
Frobenius algebras $\mathrm{F}_{\osymbol}$ and 
$\mathrm{F}_{\psymbol}$ from \eqref{eq:frob-algebras} 
together with the gluing maps defined by 
\begin{gather}\label{eq:TQFT2}
\begin{aligned}
\mathrm{gl}_{\mathrm{F}_{\osymbol}}\colon
\mathrm{F}_{\osymbol}\otimes\mathrm{F}_{\osymbol}\to \mathrm{F}_{\osymbol},\, 
(a+& bX)\otimes (c+dX)\mapsto ac+(\parom bc + \parome ad) X,
\\
\mathrm{gl}_{\mathrm{F}_{\psymbol}}\colon 
&\mathrm{F}_{\psymbol}\to\mathrm{F}_{\osymbol},\; 1\mapsto 1.
\end{aligned}
\end{gather}
and then apply the method from \cite[Section 2.2]{EST} to get an 
evaluation map $\mathsf{ev}$. (By convention, $\mathsf{ev}(\mathrm{id}_{\emptyset})=1$.)
Our goal is to show 
that with this choices, the functor 
$\sTQFT[\parameter]$ has values in the 
category, $\fmod$, of free $\field$-modules of finite rank:

\begin{theorem}\label{theorem:blanchet}
The universal construction
with the above choice of 
evaluation $\mathsf{ev}$ gives rise to a monoidal 
functor $\sTQFT[\parameter]\colon\Ff\to\fmod$.
\end{theorem}

The proof of this occupies the next section. 
We call $\sTQFT[\parameter]$ the $6$\textit{-parameter, singular TQFT}. 
\subsection{Foam relations}\label{subsec:foam-relations}
To make sense of the following, we enrich 
$\Ff$ to a  $\field$-linear category $\Ffno$ (by allowing formal finite, $P$-linear combinations of morphisms) and consider the extension $\sTQFT[\parameter]\colon\Ffno\to\newmod$ of the functor $\sTQFT[\parameter]$. 
Given $\field$-linear combinations, say $f$ and $g$, of 
prefoams we say $f$ \textit{equals} $g$ \textit{modulo the kernel} of $\sTQFT[\parameter]$, if 
$\sTQFT[\parameter](f)=\sTQFT[\parameter](g)$ as $\field$-linear maps. 

\begin{remark}\label{remark:later-use}
Later we often use the 
specialization of $\parameter$ to $\parameterS$ 
from the introduction. 
For convenience, we indicate sometimes in small print, with brackets and in gray, the corresponding values 
under the specialization to $\parameterS$.
\end{remark}

The next lemma can be proven as in \cite[Section 2.2]{EST}.

\begin{lemma}\label{lemma:moreandmore}
The following equalities hold in $\Ffno$ modulo the kernel of $\sTQFT[\parameter]$.
The \textit{ordinary and phantom sphere relations} 
and the \textit{dot removing relations}:
\begin{gather}
\raisebox{0.075cm}{\xy
(0,0)*{\includegraphics[scale=.65]{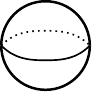}};
\endxy}
=0,\quad\quad
\raisebox{0.075cm}{\xy
(0,0)*{\includegraphics[scale=.65]{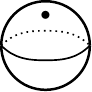}};
\endxy}
=\neatspec{1}{\parepi},\quad\quad
\raisebox{0.075cm}{\xy
(0,0)*{\includegraphics[scale=.65]{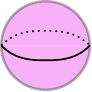}};
\endxy}
=\neatspec{\parsign\paro^{2}}{\parep},\label{eq:theusualrelations1}
\\
\neatspec{1}{\parepi^{-2}}\,
\raisebox{0.075cm}{\xy
(0,0)*{\includegraphics[scale=.65]{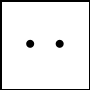}};
\endxy}
\;=
\neatspec{\parto}{\parto}
\;
\raisebox{0.075cm}{\xy
(0,0)*{\includegraphics[scale=.65]{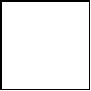}};
\endxy}
+
\neatspec{0}{\parha\parepi^{-1}}
\;
\raisebox{0.075cm}{\xy
(0,0)*{\includegraphics[scale=.65]{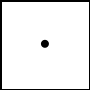}};
\endxy}
\quad,\quad
\raisebox{0.075cm}{\xy
(0,0)*{\includegraphics[scale=.65]{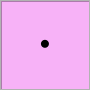}};
\endxy}
=
\neatspec{\parsign\paro^{-2}}{\parep^{-1}}
\raisebox{0.075cm}{\xy
(0,0)*{\includegraphics[scale=.65]{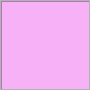}};
\endxy}
\label{eq:theusualrelations1b}
\end{gather}
The \textit{ordinary and phantom neck cutting relations}:
\begin{gather}\label{eq:theusualrelations2}
\raisebox{0.075cm}{\xy
(0,0)*{\includegraphics[scale=.65]{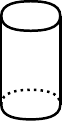}};
\endxy}
\;=\;
\neatspec{1}{\parepi^{-1}}
\raisebox{0.075cm}{\xy
(0,0)*{\includegraphics[scale=.65]{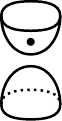}};
\endxy}
\;+\;
\neatspec{1}{\parepi^{-1}}
\raisebox{0.075cm}{\xy
(0,0)*{\includegraphics[scale=.65]{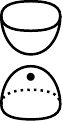}};
\endxy}
\;-\;
\neatspec{0}{\parha}
\raisebox{0.075cm}{\xy
(0,0)*{\includegraphics[scale=.65]{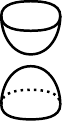}};
\endxy}
\quad,\quad
\raisebox{0.075cm}{\xy
(0,0)*{\includegraphics[scale=.65]{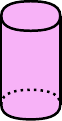}};
\endxy}
\;=\;
\neatspec{\parsign\paro^{-2}}{\parep^{-1}}
\raisebox{0.075cm}{\xy
(0,0)*{\includegraphics[scale=.65]{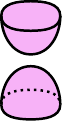}};
\endxy}
\end{gather}
The \textit{sphere} or \textit{theta foam} 
relations (where $a,b\in\{0,1\}$ denote the number of dots):
\begin{gather}\label{eq:sphere}
\raisebox{0.075cm}{\xy
(0,0)*{\includegraphics[scale=.65]{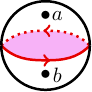}};
\endxy}
=
\setlength{\arraycolsep}{5pt}
\renewcommand{\arraystretch}{1.6}
\left\{\begin{array}{c c c c c}
0,            & \text{if }a=0,b=0, & \phantom{a} & \neatspec{\paro}{\parepi^{-1}\parom}, & \text{if }a=1,b=0, \\
\neatspec{\parsign\paro}{\parepi^{-1}\parome},   & \text{if }a=0,b=1, & \phantom{a} & 0, & \text{if }a=1,b=1. \\
\end{array}\right.
\end{gather}
The \textit{bubble removals} (where we have a sphere in a phantom plane, with the top 
dots on the front facets and the bottom dots on the back facets):
\begin{gather}\label{eq:bubble1}
\begin{gathered}
\phantom{.}
\xy
(0,0)*{\includegraphics[scale=.65]{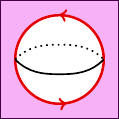}};
\endxy
=
0
=
\xy
(0,0)*{\includegraphics[scale=.65]{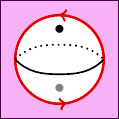}};
\endxy
\;,\;
\neatspec{\paro}{\parepi^{-1}\parep\parome^{-1}}
\xy
(0,0)*{\includegraphics[scale=.65]{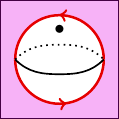}};
\endxy
=
\xy
(0,0)*{\includegraphics[scale=.65]{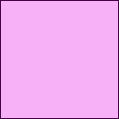}};
\endxy
=
\neatspec{\parsign\paro}{\parepi^{-1}\parep\parom^{-1}}
\xy
(0,0)*{\includegraphics[scale=.65]{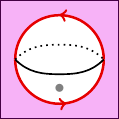}};
\endxy
\end{gathered}
\end{gather}
The \textit{(singular) neck cutting relation} (with top dot in the front and 
bottom dot in the back):
\begin{gather}\label{eq:neckcut}
\begin{aligned}
\phantom{.}
\xy
(0,0)*{\includegraphics[scale=.65]{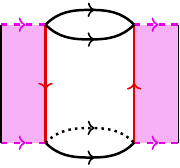}};
\endxy
=&
\neatspec{\paro}{\parepi^{-1}\parep\parome^{-1}}
\xy
(0,0)*{\includegraphics[scale=.65]{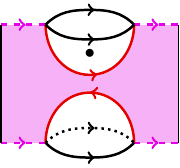}};
\endxy
+
\neatspec{\parsign\paro}{\parepi^{-1}\parep\parom^{-1}}
\xy
(0,0)*{\includegraphics[scale=.65]{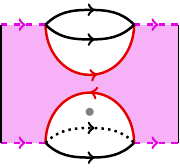}};
\endxy
\end{aligned}
\end{gather}
The \textit{ordinary-to-phantom neck cutting relation} 
(in the leftmost picture the upper closed circle is an ordinary facet) 
and the \textit{squeezing relation}:
\\
\noindent\begin{tabularx}{.95\textwidth}{@{}XX@{}}
\begin{equation}\hspace{-7cm}\label{eq:neckcutphantom}
\xy
(0,0)*{\includegraphics[scale=.65]{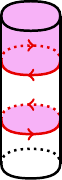}};
\endxy
\;=\;
\neatspec{\parsign}{\parep^{-1}\parome^2}
\xy
(0,0)*{\includegraphics[scale=.65]{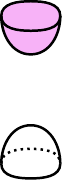}};
(0,16)*{\phantom{a}};
\endxy
\end{equation} &
\begin{equation}\hspace{-4.5cm}\label{eq:squeezing}
\xy
(0,0)*{\includegraphics[scale=.65]{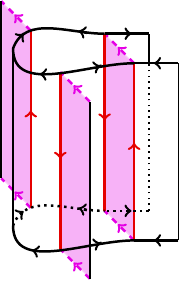}};
\endxy
=\;
\neatspec{\parsign}{\parep\parome^{-2}}
\xy
(0,0)*{\includegraphics[scale=.65]{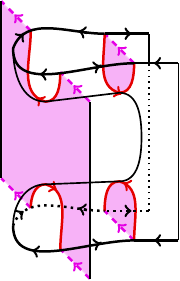}};
\endxy
\end{equation}
\end{tabularx}\\
The \textit{dot migration relations}:
\begin{gather}\label{eq:dotmigration}
\scalebox{.8}{$
\neatspec{1}{\parepi^{-1}}
\reflectbox{
\raisebox{0.075cm}{\xy
(0,0)*{\includegraphics[scale=.65]{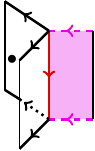}};
\endxy}}
\;=\;
\neatspec{\parsign}{\parepi^{-1}\parom\parome^{-1}}
\reflectbox{
\raisebox{0.075cm}{\xy
(0,0)*{\includegraphics[scale=.65]{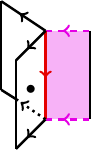}};
\endxy}}
\;-\;
\neatspec{0}{\parha\parom\parome^{-1}}
\reflectbox{
\raisebox{0.075cm}{\xy
(0,0)*{\includegraphics[scale=.65]{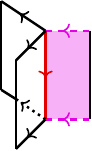}};
\endxy}}
\;,\;
\neatspec{1}{\parepi^{-1}}
\raisebox{0.075cm}{\xy
(0,0)*{\includegraphics[scale=.65]{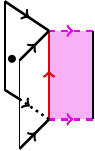}};
\endxy}
\;=\;
\neatspec{\parsign}{\parepi^{-1}\parom\parome^{-1}}
\raisebox{0.075cm}{\xy
(0,0)*{\includegraphics[scale=.65]{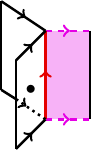}};
\endxy}
\;-\;
\neatspec{0}{\parha\parom\parome^{-1}}
\raisebox{0.075cm}{\xy
(0,0)*{\includegraphics[scale=.65]{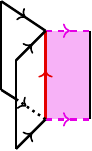}};
\endxy}$}
\end{gather}
The \textit{closed seam removal relation}:
\begin{gather}\label{eq:closedseam}
\raisebox{0.075cm}{\xy
(0,0)*{\includegraphics[scale=.65]{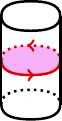}};
\endxy}
\;=\;
\neatspec{\parsign\paro}{\parepi^{-1}\parome}
\raisebox{0.075cm}{\xy
(0,0)*{\includegraphics[scale=.65]{figs/fig2-26.pdf}};
\endxy}
\;+
\neatspec{\paro}{\parepi^{-1}\parom}
\raisebox{0.075cm}{\xy
(0,0)*{\includegraphics[scale=.65]{figs/fig2-27.pdf}};
\endxy}
\end{gather}
The \textit{phantom cup removal} and 
the \textit{phantom squeeze relation}:
\\
\noindent\begin{tabularx}{.95\textwidth}{@{}XX@{}}
\begin{equation}\hspace{-6.7cm}\label{eq:phantom-removal}
\xy
(0,0)*{\includegraphics[scale=.65]{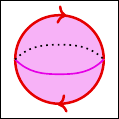}};
\endxy
\!=\!
\neatspec{\parsign\paro}{\parepi^{-1}\parome}
\xy
(0,0)*{\includegraphics[scale=.65]{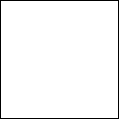}};
(0,8.75)*{\phantom{a}};
\endxy
\end{equation} &
\begin{equation}\hspace{-5.7cm}\label{eq:phantom-squeezing}
\xy
(0,0)*{\includegraphics[scale=.65]{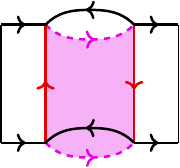}};
\endxy
\!=\!
\neatspec{\parsign^{-1}\paro^{-1}}{\parepi\parome^{-1}}
\xy
(0,0)*{\includegraphics[scale=.65]{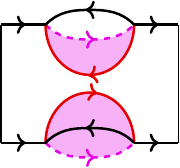}};
\endxy
\end{equation}
\end{tabularx}\\
and their counterparts with the phantom facets in the back, and  
$\parom$ instead of $\parome$.
\end{lemma}

\begin{remark}\label{remark:generators}
By repeated application of neck cutting \eqref{eq:theusualrelations2},
one can in fact check that the 
relations \eqref{eq:theusualrelations1}-\eqref{eq:sphere} already 
imply the remaining relations.
\end{remark}

Note that the substitution $X\mapsto\parepi X$, which rescales the dot on ordinary facets, gets rid of the parameter $\parepi$;
we will set $\parepi=1$ from now on without loss of generality.

Given a prefoam $f$, let $\hat f$ be 
the (dotted) CW complex obtained from it 
by removing the phantom edges and facets. We define the
\textit{degree} of $f$ as
\begin{gather}\label{eq:foam-degree}
\mathrm{deg}(f)=-\chi(\hat f)+2\#\text{dots}+\neatfrac{1}{2}\#\text{vbound},
\end{gather}
where $\chi$ is the topological Euler characteristic, 
and the numbers $\#\text{dots}$ and $\#\text{vbound}$, denote the total number 
of dots, respectively of vertical boundary components of $\hat f$.

\begin{definition}
Let $\Fker$ be the additive closure of the monoidal category 
obtained by taking the quotient of $\Ffno$ by the relations modulo 
the kernel of $\sTQFT[\parameter]$ listed in \fullref{lemma:moreandmore}.
\end{definition}

(The \textit{additive closure} means that we 
allow instead finite formal direct sums of objects 
and matrices of morphisms, see e.g. \cite[Definition 3.2]{BN1}. 
The additive closure of a $2$-category means we take the additive 
closure on the level of $1$- and $2$-morphisms, keeping the objects.)

Note that $\Fker$ is a graded category. 
We have in $\Fker$ isomorphisms of objects
\begin{gather}\label{eq:web-eval}
\underbrace{\xy
(0,0)*{\includegraphics[scale=.65]{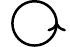}};
\endxy
\cong
\emptyset\langle+1\rangle
\oplus
\emptyset\langle-1\rangle}_{\text{By }\eqref{eq:theusualrelations1},\eqref{eq:theusualrelations2}},
\quad
\underbrace{\xy
(0,0)*{\includegraphics[scale=.65]{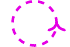}};
\endxy
\cong
\emptyset}_{\text{By }\eqref{eq:theusualrelations1},\eqref{eq:theusualrelations2}},
\quad
\underbrace{\xy
(0,0)*{\includegraphics[scale=.65]{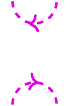}};
\endxy
\cong
\xy
(0,0)*{\includegraphics[scale=.65]{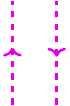}};
\endxy}_{\text{By }\eqref{eq:theusualrelations2}}
\quad,\quad
\underbrace{\xy
(0,0)*{\includegraphics[scale=.65]{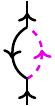}};
\endxy
\cong
\xy
(0,0)*{\includegraphics[scale=.65]{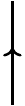}};
\endxy}_{\text{By }\eqref{eq:phantom-removal},\eqref{eq:phantom-squeezing}}
\end{gather}
as well as their horizontal mirrors and isomorphisms between isotopic webs. 

A consequence of the topological definitions is the existence of an \textit{adjunction} isomorphism 
of $\field$-modules $\Hom_{\field}(v,u)\cong\Hom_{\field}(\emptyset,uv^{\ast})$ 
for any webs $v$, $w$ via \textit{clapping}; hereby $uv^{\ast}$ 
denotes the closed web obtained by reversing the orientation of 
$v$ and gluing it on top of $u$:
\begin{gather}\label{eq:foam-clapping}
\xy
\xymatrix{
\xy
(0,0)*{\includegraphics[scale=.65]{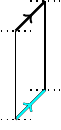}};
\endxy
\quad
\ar@{~>}[r]&
\quad
\raisebox{-.045cm}{\xy
(0,0)*{\includegraphics[scale=.65]{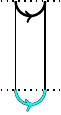}};
\endxy}
\quad
\ar@{~>}[r]&
\quad
\xy
(0,0)*{\includegraphics[scale=.65]{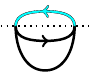}};
\endxy
}
\endxy
\end{gather}
In this example, the identity prefoam displayed on the left corresponds to the counit 
prefoam displayed in the right, see e.g. \cite[Lemma 3.7]{MPT} for a more detailed treatment. 
We denote $\Hom_\Fker(v,w):=\Hom_\Fker(\emptyset,wv^{\ast})$. Elements in these sets are 
called \textit{foams} from $v$ to $w$.
Composition of $f\in\Hom_\Fker(v,w)$ with $g\in\Hom_\Fker(w,z)$ is defined 
as $(\op{id}_{z}\otimes c\otimes\op{id}_{v^\ast})\circ (g\otimes \op{id}_{wv^\ast})\circ f $, 
where $c\colon w^\ast w\rightarrow\emptyset$ corresponds to the identity on $w^\ast$ by applying adjunction twice. One checks that this is well-defined and defines a 
category, the \textit{foam category}.

\begin{lemma}\label{lemma:web-eval}
In $\Fker$ and in the foam category, hom-spaces are free $\field$-modules of 
finite rank with endomorphism ring of $\emptyset$ isomorphic to $\field$.
\end{lemma}

\begin{proof}
Because of the relations in \fullref{lemma:moreandmore}, 
we know that $\End_{\Fker}(\emptyset)$ is 
a quotient of $\field$, since any closed foam equals an element of $\field$. 
Since $\mathsf{ev}(\mathrm{id}_{\emptyset})=1$, the last claim follows.
By clapping it suffices to
consider $\Hom_\Fker(\emptyset,w)$ for closed webs $w$ only.
Then \eqref{eq:web-eval} gives an isomorphism
from $\Hom_\Fker(\emptyset,w)$ to 
finitely many (possibly shifted) copies of $\End_\Fker(\emptyset)$. (This follows 
because of the arguments given in \cite[Section 6.A]{ETWi}.)
\end{proof}

Hence, the above constructs the functor $\sTQFT[\parameter]$, which maps to $\fmod$, and so \fullref{theorem:blanchet} is proved. 
\subsection{Foam \texorpdfstring{$2$}{2}-categories}\label{sub:genericfoam}
In this section we will define, generalizing \cite{EST}, a $2$-category which we call 
the $6$\textit{-parameter foam} $2$\textit{-category}

For the definition denote by $\Y^\pm$ the set of 
all $\vec{k}=(k_i)_{i\in\Z}\in\{0,{+}1,{+}2,{-}1,{-}2\}^{\Z}$ with $k_i=0$ 
for $|i|\gg 0$.  We view a web as a morphism between two such 
sequences, namely the 
\textit{orientation sequences} $\vec{k}$, $\vec{k}^{\prime}$ induced 
by the orientation of the web at the boundary 
points at the bottom and top extended to an infinite sequence by setting $k_i=0$ for $i<0$ and $i\gg 0$.

The absolute values of the $k_i$
are given by the labels of the web. The sign 
is $+$ in the source (=bottom) respectively target 
(=top) sequence if the web points out of the bottom 
respectively towards the top; and it is $-$ otherwise.

\begin{definition}\label{definition:foamcat}
Let $\F^\pm[\parameter]$ be the graded, additive, $\field$-linear $2$-category given by:
\begin{enumerate}[label=(\roman*)]

\item The objects are all $\vec{k}\in\Y^\pm$.

\item The $1$-morphisms and $2$-morphisms spaces are given 
by the foam category. More precisely, the $1$-morphisms 
between $\vec{k}$, $\vec{k}^{\prime}$ are the free $\field$-module with basis all webs whose 
pair of orientation sequences agrees with $(\vec{k}$, $\vec{k}^{\prime})$ up to infinitely many zeros. 

\item Composition of webs $v\circ u=uv$ is  
stacking $v$ on top of $u$, vertical 
composition of prefoams $g\circ f$ 
is stacking $g$ on top of $f$, horizontal 
composition $f\otimes g$ is putting $g$ 
to the right of $f$, whenever those 
operations make sense.
\end{enumerate}
\end{definition}

For $\vec{k}\in\Y^\pm$ with $\sum_{i\in\Z}|k_i|=2\ell$ 
consider the space of $1$-morphisms of cup webs
$\CUP(\vec{k})=\twoHom_{\F^\pm[\parameter]}(\emptyset,\vec{k})$ 
and then for $u,v\in\CUP(\vec{k})$ the 
space of $2$-morphisms $\twoHom_{\F^\pm[\parameter]}(\mathrm{id}_{\emptyset},uv^\ast)$. 
We recall a certain basis $\CUPbasis{u}{v}{\vec{k}}$, 
called the \textit{cup foam basis} for this $2$-hom space. 
Its formal definition is a bit technical
(but not hard) and can be found in \cite[Definition 4.12]{EST} 
(with the evident adaptions required for our setting). For 
our purposes it suffices to work with a more intuitive informal description: 
The isomorphisms from \eqref{eq:web-eval} induce 
isomorphisms given by 
foams as follows (including a right-handed version of the second isomorphism)
\begin{gather}\label{eq:cup-foam-basis-isos}
\xymatrix@C26mm{
\xy
(0,0)*{\includegraphics[scale=.65]{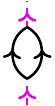}};
\endxy
\ar@<2pt>[r]^/-.5cm/{\left(\xy
(0,0)*{\includegraphics[scale=0.325]{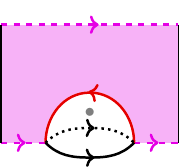}};
\endxy
\;,\;
\xy
(0,0)*{\includegraphics[scale=0.325]{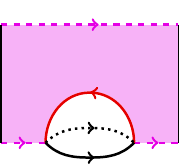}};
\endxy
\right)^{\mathrm{T}}}
&
\xy
(0,0)*{\includegraphics[scale=.65]{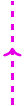}};
\endxy
\langle+1\rangle
\oplus
\raisebox{-.075cm}{$\xy
(0,0)*{\includegraphics[scale=.65]{figs/fig2-web-eval2.pdf}};
\endxy$}
\langle-1\rangle
\ar@<2pt>[l]^/.5cm/{\left(
\xy
(0,0)*{\includegraphics[scale=0.325]{figs/fig2-62}};
\endxy
\;,\;
\xy
(0,0)*{\includegraphics[scale=0.325]{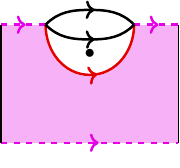}};
\endxy\right)^{\phantom{\mathrm{T}}}}
}
\quad,\quad
\xymatrix@C12mm{
\xy
(0,0)*{\includegraphics[scale=.65]{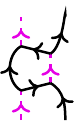}};
\endxy
\ar@<2pt>[r]^{
\xy
(0,0)*{\includegraphics[scale=0.325]{figs/fig5-squeeze1}};
\endxy}
&
\xy
(0,0)*{\includegraphics[scale=.65]{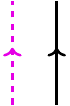}};
\endxy
\ar@<2pt>[l]^{
\xy
(0,0)*{\includegraphics[scale=0.325]{figs/fig5-squeeze2}};
\endxy}
}
\end{gather}
We fix these isomorphisms and additionally a 
\textit{base point} $\blacklozenge$ on each circle 
in $uv^{\ast}$ with maximal $x$ coordinate and then the 
lowest $y$ coordinate, where we see $uv^{\ast}$ as 
being embedded in the $xy$-plane. We moreover fix an 
\textit{evaluation of} $uv^{\ast}$, i.e. 
a sequence of isomorphisms as in \eqref{eq:cup-foam-basis-isos} 
which recursively 
reduce $uv^{\ast}$ to copies of $\emptyset$. This 
evaluation has to be chosen such that the segments 
of $uv^{\ast}$ containing the 
base points are only removed using the leftmost 
isomorphism in \eqref{eq:cup-foam-basis-isos}. 
We can then define the cup foam basis 
inductively by reading the corresponding 
isomorphisms from 
$\emptyset$ to $uv^{\ast}$.
(The reader might think about cup foam basis elements as ``cups'' in the 
naive sense of the word.)

We summarize the main properties in the following lemma 
which we will use throughout, see \cite[Section~2 and Definition~4.12]{EST} for a proof.
For $\vec{k}\in\Y^\pm$ with $\sum_{i\in\Z}|k_i|=2\ell$, 
define the \textit{shift} $d(\vec{k})=\ell-\sum_{i\in\Z}|k_i|(|k_i|-1)$.
Note that for $\vec{k}\in\Y^\pm$ with $k_i\neq{\pm}2$ for all $i$ we have $d(\vec{k})=\ell$.

\begin{lemmaqed}(Cup foam basis)
\label{lemma:it-is-a-basis}

\begin{enumerate}[label=(\alph*)]
\item Given two webs $u,v$, let $\clapping({}_-)$ denote their clappings, cf. the middle picture in \eqref{eq:clapping}.
Then there are isomorphisms of graded $\field$-modules
\begin{align*}
&\,\twoHom_{\F^\pm[\parameter]}(v,u)
\cong
\twoHom_{\F^\pm[\parameter]}(\clapping(v),\clapping(u))
\cong\,
\twoHom_{\F^\pm[\parameter]}
(\emptyset,\clapping(u)\clapping(v)^{\ast})\langle d(\vec{k})\rangle.
\end{align*} 

\item The set $\CUPbasis{u}{v}{\vec{k}}$ is a homogeneous, $\field$-linear 
basis of $\twoHom_{\F^\pm[\parameter]}(\emptyset,uv^{\ast})$ 
for any $u,v\in\CUP(\vec{k})$. It descends also to a basis for any specialization 
of the parameters.\qedhere
\end{enumerate}
\end{lemmaqed}

Note that (b) ensures that (almost) all statements which 
we are going to make hold verbatim for any specialization of parameters. 
We will stop making this comment.
\subsection{Known specializations}\label{sub:special}
Let $R=\Z[\parto,\parha]$ in case $\Ca$, $R=\Z[i]$ in case 
$\CMW$, and $R=\Z$ in cases $\KBN$ or $\Bl$.
The following $2$-categories recover the 
setups from \eqref{eq:main-specs}.

\begin{definition}\label{definition-several-2-cats}
We define four graded, additive, $R$-linear $2$-categories  
$\F^\pm[\KBN]$, $\F^\pm[\Ca]$, $\F^\pm[\CMW]$ and $\F^\pm[\Bl]$
as in \fullref{definition:foamcat} 
except for the following differences.
\begin{enumerate}[label=(\roman*)]

\item The parameters are as in \eqref{eq:main-specs}.

\item As $1$-morphisms one has ``webs'' locally given as
\begin{gather}\label{eq:as-webs}
\xy
(0,0)*{\includegraphics[scale=.65]{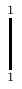}
\;\raisebox{.55cm}{,}\;
\raisebox{.5cm}{$\emptyset$}
\;\raisebox{.55cm}{,}\;
\includegraphics[scale=.65]{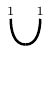}
\;\raisebox{.55cm}{,}\;
\includegraphics[scale=.65]{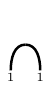}};
(0,-6.5)*{\text{{\tiny in case $\KBN$}}};
\endxy
\;;\;
\xy
(0,0)*{\includegraphics[scale=.65]{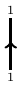}
\;\raisebox{.55cm}{,}\;
\raisebox{.5cm}{$\emptyset$}
\;\raisebox{.55cm}{,}\;
\includegraphics[scale=.65]{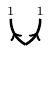}
\;\raisebox{.55cm}{,}\;
\includegraphics[scale=.65]{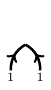}};
(0,-6.5)*{\text{{\tiny in case $\Ca$}}};
\endxy
\;;\;
\xy
(0,0)*{\includegraphics[scale=.65]{figs/fig2-1.pdf}
\;\raisebox{.55cm}{,}\;
\raisebox{.5cm}{$\emptyset$}
\;\raisebox{.55cm}{,}\;
\includegraphics[scale=.65]{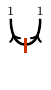}
\;\raisebox{.55cm}{,}\;
\includegraphics[scale=.65]{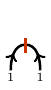}};
(0,-6.5)*{\text{{\tiny in case $\CMW$}}};
\endxy
\;;\;
\xy
(0,0)*{\includegraphics[scale=.65]{figs/fig2-1.pdf}
\;\raisebox{.55cm}{,}\;
\includegraphics[scale=.65]{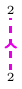}
\;\raisebox{.55cm}{,}\;
\includegraphics[scale=.65]{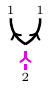}
\;\raisebox{.55cm}{,}\;
\includegraphics[scale=.65]{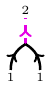}};
(0,-6.5)*{\text{{\tiny in case $\Bl$}}};
\endxy
\end{gather}

\item The $2$-morphisms   
are those defined as in the proof of \fullref{proposition:special} below.
\end{enumerate}
\end{definition}

\begin{proposition}\label{proposition:special}
There are functors of graded, additive, $R$-linear $2$-categories
\begin{gather}
\begin{gathered}
\F^\pm[0,0,1,1,1,1]{\longrightarrow}\F^\pm[\KBN],\quad\quad
\F^\pm[\parto,\parha,1,1,i,-i]{\longrightarrow}\F^\pm[\Ca],
\\
\F^\pm[0,0,1,1,i,-i]\stackrel{\cong}{\longrightarrow}\F^\pm[\CMW],\quad\quad
\F^\pm[0,0,1,-1,1,-1]\stackrel{\cong}{\longrightarrow}\F^\pm[\Bl],
\end{gathered}
\end{gather}
which are surjective on $0$, $1$ and $2$-morphisms; and even equivalences in the last two cases.
\end{proposition}

\begin{proof}
The case $\Bl$ follows directly, since the specialization is the $2$-category in \cite{Bla}. 
For the other cases we first need to define the $2$-categories 
in question on the level of $2$-morphisms 
and then set up the $2$-functors 
which provide the quotient functors. 

The $2$-morphisms of $\F^\pm[\KBN]$ are 
$R$-linear combinations of prefoams with ordinary parts only, modulo the relations
from \eqref{eq:theusualrelations1}-\eqref{eq:theusualrelations2} involving ordinary cobordisms with the 
specialization of the parameters as in \eqref{eq:main-specs}. We obtain a 
functor $\F^\pm[0,0,1,1,1,1]{\longrightarrow}\F^\pm[\KBN]$. 
that annihilates all $0$, $1$ and $2$-morphisms which involve a phantom piece. 
It is clearly full on $2$-morphisms. It induces in fact an equivalence 
of $2$-categories from the full $2$-subcategory involving no phantom parts to 
$\F^\pm[\KBN]$ thanks to  \fullref{lemma:it-is-a-basis} and the corresponding 
basis theorem in $\F^\pm[\KBN]$, see e.g. \cite[Section 9.1]{BN1}.

The $2$-morphisms of $\F^\pm[\Ca]$ are 
$R$-linear combinations of the 
topological CW complexes obtained from prefoams by removing the phantom edges 
and phantom facets, but keeping the singular seams 
and the topological relations induced from prefoams, see \cite{Cap2}, e.g.
\begin{gather}\label{eq:some-2-functors-more}
\xy
(0,0)*{\includegraphics[scale=.65]{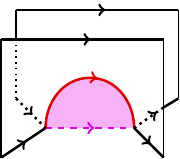}};
\endxy
\;\longmapsto\;
\xy
(0,0)*{\includegraphics[scale=.65]{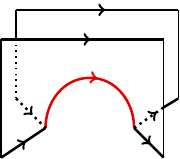}};
\endxy
\end{gather}
We obtain a well-defined forgetful functor $\F^\pm[\parto,\parha,1,1,i,-i]{\longrightarrow}\F^\pm[\Ca]$ by 
identifying our $\alpha$ and $\beta$ with the variables $a$ and $h$ in \cite{Cap2} 
(the relations in \cite{Cap2} should in particular be 
compared with our dot removal and migration and sphere relations). 
The functor just forgets all phantom parts and only remembers the singular seams. It is clearly 
surjective on each level. 
It is also faithful by \fullref{lemma:it-is-a-basis} and by 
the corresponding basis theorem in $\F^\pm[\Ca]$ which can be proven analogously 
to \fullref{lemma:it-is-a-basis}. In particular, $0$ and $1$-morphisms might be identified via this functor, but not $2$-morphisms.

The $2$-morphisms of $\F^\pm[\CMW]$ can be defined as
the quotient of $\F^\pm[0,0,1,1,i,-i]$ 
modulo an extra relation, which in the notation of \cite{CMW} is
\begin{gather}\label{eq:disor}
\xy
(0,0)*{\includegraphics[scale=.65]{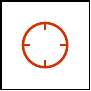}};
\endxy
=
i\cdot
\xy
(0,0)*{\includegraphics[scale=.65]{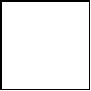}};
\endxy
\quad,\quad
\xy
(0,0)*{\includegraphics[scale=.65]{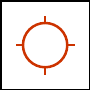}};
\endxy
=
-i\cdot
\xy
(0,0)*{\includegraphics[scale=.65]{figs/fig2-73.pdf}};
\endxy
\end{gather}
The identification of this definition with the one from 
\cite{CMW} is given by removing the phantom edges 
and phantom facets, and then replacing the singular seams by disorientation 
lines. The orientation of the seam induces the direction of the 
disorientation line as follows.
\begin{gather}\label{eq:example-cmw}
\xy
(0,0)*{\includegraphics[scale=.65]{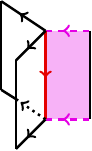}};
\endxy
\mapsto
\xy
(0,0)*{\includegraphics[scale=.65]{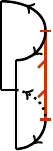}};
\endxy
\quad,\quad
\xy
(0,0)*{\includegraphics[scale=.65]{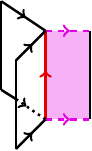}};
\endxy
\mapsto
\xy
(0,0)*{\includegraphics[scale=.65]{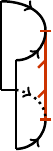}};
\endxy
\quad,\quad
\xy
(0,0)*{\includegraphics[scale=.65]{figs/fig2-60.pdf}};
\endxy
\;\mapsto\;
\xy
(0,0)*{\includegraphics[scale=.65]{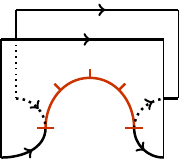}};
\endxy
\end{gather}
Here we also displayed the special case of the saddle on the right. 
According to the above description it is obtained from a 
prefoam displayed next to it by removing the phantom face and replacing the singular seams 
by a disorientation line.
We point out that, for the choice of parameters $\parom=-\parome=i$, the relation in \eqref{eq:disor} is actually \eqref{eq:phantom-removal}. In particular, we can just define the functor $\F^\pm[0,0,1,1,i,-i]{\longrightarrow}\F^\pm[\CMW]$ in the evident way. It is then an equivalence by its very definition.

The grading is always given by \eqref{eq:foam-degree}, i.e. induced by 
the topological Euler characteristic. In particular, 
disorientation lines do not change the degree. By construction 
all the functors are grading preserving, with $a$ and $h$ in \cite{Cap2} 
in the same degrees as $\alpha$ and $\beta$.
\end{proof}
\subsection{Web algebras}\label{subsection:web-algebras}
We define, following \cite{EST}, a $6$-parameter web algebra which is an algebraic version 
$\webalg^\pm[\parameter]$ of $\F^\pm[\parameter]$. The only difference to \cite[Sections 2.4 and 2.5]{EST}  
is that we work over $\field$ and use the foams defined in \fullref{definition:foamcat}. 

Denote by $\bY^\pm\subset\Y^\pm$ the set of all \textit{balanced} tuples, i.e. 
their components sum up to an even number. 
The $6$-\textit{parameter web algebra} is the graded $\field$-module
\begin{gather}\label{eq:webalg}
\webalg^\pm[\parameter]_{\vec{k}}=
{\textstyle\bigoplus_{u,v\in\CUP(\vec{k})}}
{}_u(\webalg^\pm[\parameter]_{\vec{k}})_v,\quad\;\;
\webalg^\pm[\parameter]=
{\textstyle\bigoplus_{\vec{k}\in\bY^\pm}}
\webalg^\pm[\parameter]_{\vec{k}},
\end{gather}
where ${}_u(\webalg^\pm[\parameter]_{\vec{k}})_v=
\twoHom_{\F^\pm[\parameter]}
(\emptyset,uv^{\ast})\langle d(\vec{k})\rangle$
with the grading given by the foam degree from \eqref{eq:foam-degree}, and the multiplication 
\begin{gather}\label{eq:multfoams}
\boldsymbol{\mathrm{Mult}}\colon
\webalg^\pm[\parameter]_{\vec{k}}\otimes\webalg^\pm[\parameter]_{\vec{k}}\rightarrow\webalg^\pm[\parameter]_{\vec{k}},\;f\otimes g\mapsto \boldsymbol{\mathrm{Mult}}(f,g)=fg
\end{gather} 
defined as in \cite{EST} using multiplication foams.
It is a nice immediate fact that web algebras are graded, associative, unital $\field$-algebras, which are 
free as $\field$-modules.

Given any web $u\in\Hom_{\F^\pm[\parameter]}(\vec{k},\vec{l})$ 
such that the sum $\vec{k}+\vec{l}$ of the boundaries is contained in $\bY$, 
we consider the $\webalg^\pm[\parameter]$-bimodule
\begin{gather}\label{eq:the-web-bimodules}
\M^\pm[\parameter](u)=
{\textstyle\bigoplus_{\substack{v\in\CUP(\vec{k}),\\w\in\CUP(\vec{l})}\phantom{,}}}
\twoHom_{\F^\pm[\parameter]}(\emptyset,vuw^{\ast})
\end{gather}
with left (top) and 
right (bottom) action of $\webalg^\pm[\parameter]$ given by the multiplication foams. 
We call all such $\webalg^\pm[\parameter]$-bimodules 
$\M^\pm[\parameter](u)$ \textit{web bimodules}.
These have a \textit{cup foam basis} $\CUPBasis{u}$ 
by considering 
all webs $vuw^{\ast}$ for $v\in\CUP(\vec{k}),w\in\CUP(\vec{l})$. The following summarizes the main properties of these bimodules:

\begin{propositionqed}(Cf. \cite[Sections 2.4 and 2.5]{EST}.)\label{proposition:webbimodules2}
Let $u\in\Hom_{\F^\pm[\parameter]}(\vec{k},\vec{l})$.

\begin{enumerate}[label=(\alph*)]

\item $\M^\pm[\parameter](u)$ are graded, biprojective  
$\webalg^\pm[\parameter]$-bimodules which are free as $\field$-modules
such that the summands in \eqref{eq:the-web-bimodules} are of finite rank for all pairs $v,w$.

\item The set $\CUPBasis{u}$ is a homogeneous, $\field$-basis of $\M^\pm[\parameter](u)$.

\item $\CUPBasis{u}$ is also a basis for any specialization 
of the parameters.\qedhere

\end{enumerate} 
\end{propositionqed}

\begin{definition}\label{definition:catbimodulesweb}
Let $\webcatpm$ be the additive closure of the following graded, $\field$-linear $2$-category:
\begin{enumerate}[label=(\roman*)]

\item Objects are the various $\vec{k}\in\bY^\pm$.

\item $1$-morphisms are tensor products 
(taken over the algebra $\webalg^\pm[\parameter]$) of the
$\webalg^\pm[\parameter]$-bimodules $\M^\pm[\parameter](u)$.

\item $2$-morphisms are $\webalg^\pm[\parameter]$-bimodule homomorphisms.

\item The composition of web bimodules is 
the tensor product ${}_-\otimes_{\webalg^\pm[\parameter]}{}_-$.
The vertical composition of $\webalg^\pm[\parameter]$-bimodule homomorphisms is 
the usual composition. The horizontal composition 
is given by tensoring over $\webalg^\pm[\parameter]$.
\end{enumerate}
\end{definition}

This $2$-category provides a faithful $2$-representation 
of the $2$-category $\F^\pm[\parameter]$:

\begin{proposition}\label{proposition:cats-are-equal}
There is an embedding of graded, additive, $\field$-linear $2$-categories
\begin{gather}\label{eq:upsilon}
\Upsilon\colon\!\F^\pm[\parameter]\hookrightarrow\webcatpm,
\end{gather}
which is bijective on objects and essential surjective and faithful on $1$-morphisms.
\end{proposition}

\begin{proof}
Define $\Upsilon$ by additively extending the following assignments:
\begin{enumerate}[label=(\roman*)]

\item On objects $\vec{k}$ we set $\Upsilon(\vec{k})=\vec{k}$.

\item On $1$-morphisms 
$u\in\Hom_{\F^\pm[\parameter]}(\vec{k},\vec{l})$ we set $\Upsilon(u)=\M^\pm[\parameter](u)$.

\item On $2$-morphisms 
$f\in\twoHom_{\F^\pm[\parameter]}(u,v)$ we set 
$\Upsilon(f)\colon\M^\pm[\parameter](u)\to\M^\pm[\parameter](v)$ given 
by stacking $f$ on top of the elements of $\M^\pm[\parameter](u)$.
\end{enumerate}
Note that $\Upsilon(f)$ is a $\webalg^\pm[\parameter]$-bimodule homomorphism, by the arguments 
from \cite[Section 2.7]{Khov}. Since $\Upsilon$ is bijective on objects,
it remains to show that $\Upsilon$ is 
essential surjective and faithful on $1$-morphisms 
(which is however just a parameter adapted version of \cite[Theorem 1]{Khov}) 
and faithful (which follows directly from the existence of the cup foam basis) on $2$-morphisms.
Clearly, $\Upsilon$ is degree preserving.
The statement follows.
\end{proof}
\subsection{Upwards oriented webs}\label{subsec:upwards}
Let $\Y\subset\Y^\pm$ be the subset of $\vec{k}$'s without 
entries of the form ${-}1,{-}2$. We call a web \textit{upward oriented}
if it is embedded in $\R\times[-1,1]$, 
with its boundary in $\R\times\{-1,1\}$
such that for each generic horizontal slice 
its orientations point upwards through this slice.

\begin{definition}\label{definition:foamcat-next}
Let $\F[\parameter]$ be the full $\field$-linear $2$-subcategory of 
$\F^{\pm}[\parameter]$ given by the objects in $\Y$, all
upward oriented webs and the full space of $2$-morphisms.
\end{definition}

\begin{remark}\label{remark:not-serious}
We sometimes restrict ourselves to upwards oriented webs and their foams. Via clapping
\begin{gather}\label{eq:clapping}
\xy
(0,0)*{\includegraphics[scale=.65]{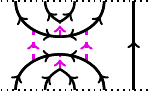}};
(0,-8)*{\text{\tiny$u$}};
\endxy
\quad,\quad
\xy
(0,0)*{\includegraphics[scale=.65]{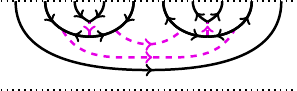}
\quad\raisebox{.5cm}{or}\quad
\includegraphics[scale=.65]{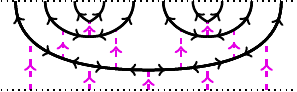}};
(0,-8)*{\text{\tiny$\clapping(u)$}};
\endxy
\end{gather}
this is not a serious restriction. In fact, a bit more work 
establishes all of our results for general webs and their foams, 
see e.g. \cite{BHPW} or \cite{ETWi}. 
Clapping justifies to think of foams between webs which have only 
phantom edges as being ``closed''.
\end{remark}

When working in the upwards oriented setup we use the same 
conventions as before, but omit the superscript ${}^{\pm}$. 
We leave it to the reader to transfer the results established so far in the evident way to this setting.
\subsection{Arc algebras}\label{subsec:arc-algebras}
Following \cite[Section 3]{EST}, we now define a parameter version of arc algebras, using the 
parameters $\parameterS=(\parto,0,1,\parsign\paro^2,\paro,\parsign\paro)$ 
over the ground ring $\ring=\Z[\parto,\parsign,\paro^\pm]$. The only difference 
to \cite{EST} is the multiplication, which we will therefore explain.
The crucial difference compared to web algebras is that arc algebras 
come with a nice graded basis and an explicit combinatorial multiplication, but e.g. 
some basic natural properties like the associativity is a priori not clear from the algebraic definition of the multiplication, and is often proved by invoking a TQFT as e.g. in \cite[Section 3]{BS1}. 

Recall from \cite[Section 3]{EST} \textit{weights}, i.e.
$\lambda=(\lambda_i)_{i\in\mathbb{Z}}$ 
with $\lambda_i\in\{\circ,\times,\down,\up\}$, 
such that $\lambda_i=\circ$ for $|i|\gg0$, 
\textit{(balanced) blocks} $\bblock\subset\block$, 
and \textit{block sequences} 
$\mathrm{seq}(\Lambda) = ({\rm seq}(\Lambda)_i)_{i \in \mathbb{Z}}$ 
by taking any $\lambda\in\Lambda$ and replacing the symbols 
$\up,\down$ by $\dummy$.

For each $\lambda\in\block$ 
there is associated a \textit{cup}
$\underline{\lambda}$ and \textit{cap diagram} $\overline{\lambda}$. These cup $c$ and cap $d^{\ast}$ diagrams can be oriented using 
the entries from a weight. We denote these oriented cup and cap diagrams by $c\lambda$ and $\lambda d^{\ast}$.
We also have the notion of (oriented) circles $C$ in $cd^{\ast}$.

For fixed $\Lambda\in\block$, let
$\CupBasis{\Lambda}=\{\underline{\lambda}
\nu\overline{\mu}\mid\underline{\lambda}\nu\overline{\mu}\text{ is oriented and } 
\lambda,\mu,\nu\in\Lambda\}$ 
and its subset $\Cupbasis{\lambda}{\mu}{\Lambda}$, where we fix $\lambda$ and $\mu$, containing $\CUPbasis{\lambda}{\mu}{\Lambda}$, where we additionally require that $\lambda$ and $\mu$ are balanced. We also have a degree given by
\begin{gather}\label{eq:degreegenKh}
\begin{aligned}
\mathrm{deg}(c\lambda)=\#\{\text{clockwise cups in } c\lambda\}&,\;
\mathrm{deg}(\lambda d^{\ast})=\#\{\text{clockwise caps in } \lambda d^{\ast}\},\\
\mathrm{deg}(c\lambda d^{\ast})=&\,\mathrm{deg}(c\lambda)+\mathrm{deg}(\lambda d^{\ast}).
\end{aligned}
\end{gather}
We call elements of these sets \textit{basis cup diagrams}.

Denote by ${}_\lambda(\Arcalg_\Lambda)_\mu$ the 
$\ring$-linear span of the basis vectors 
from $\CUPbasis{\lambda}{\mu}{\Lambda}$.
The \textit{signed} $2$-\textit{parameter arc algebras} are defined as the 
graded (using \eqref{eq:degreegenKh}), free $\ring$-modules
\begin{gather}\label{definition:genericarcalgebra}
\Arcalg_\Lambda =
{\textstyle\bigoplus_{\underline{\lambda}
\nu \overline{\mu} \in \CUPBasis{\Lambda}}}
{}_\lambda(\Arcalg_\Lambda)_\mu,
\quad\quad
\Arcalg=
{\textstyle\bigoplus_{\Lambda\in\bblock}}
\Arcalg_\Lambda,
\end{gather}
with multiplication given as follows.
Up to coefficients, the multiplication is the same as in \cite[Section 3.3]{EST} 
and we only focus on explaining these coefficients. In fact, \cite[Section 3.3]{EST} is the specialization 
$(0,0,1,-1,1,-1)$ in the notation of this paper.

Recall the \textit{position of} $i$ denoted 
by $\pos_\Lambda(i)$, the \textit{distance} $\length_\Lambda(\gamma)$, 
the \textit{saddle width} $s_\Lambda(\gamma)$ an the \textit{length} 
$\length_\Lambda(\gamma_1\cdots\gamma_r)$ of a 
sequence of arcs.
The signs can 
be divided into the \textit{dot moving signs}, 
the \textit{topological sign} 
and the \textit{saddle sign}.
These signs are as follows (explained for each case in detail below).
\begin{gather}\label{eq:thesigns}
\begin{aligned}
\textit{Dot moving }& \textit{signs: }\parsign^{\length_\Lambda(\gamma_i^{\rm dot})}\quad\text{and}\quad
\parsign^{\length_\Lambda(\gamma_i^{\rm ndot})}.\\
\textit{Topological sign: }& \parsign^{\neatfrac{1}{4}(\length_\Lambda(C_{\rm in})-2)}.\quad\quad
\textit{Saddle sign: } \parsign^{s_\Lambda(\gamma)}.
\end{aligned}
\end{gather}
The dot moving signs can appear in any situation, the topological 
sign will appear for nested merges and splits, and the saddle sign for 
nested merges and non-nested splits.
Each case can pick up some 
extra factors $\parto$, $\parsign$ 
or $\paro$ as we are going to describe now.

\noindent\textbf{Non-nested Merge.} The non-nested 
circles $C_i$ and $C_j$---containing vertices $i$ 
respectively $j$---are merged into $C$. 
The cases from above are modified as follows.

\noindent\textit{$\succ$ Both circles oriented anticlockwise.} 
As in \cite[Section 3.3]{EST} 
(no extra coefficients).

\noindent\textit{$\succ$ One circle $C_i$
oriented clockwise, one oriented anticlockwise.} 
As in \cite[Section 3.3]{EST}, and coefficient 
\begin{gather}\label{eq:dotsign}
\parsign^{\length_\Lambda(\gamma_i^{\rm dot})}. 
\end{gather}

\noindent\textit{$\succ$ Both circles $C_i,C_j$ oriented clockwise.}  
This case is not mentioned in \cite[Section 3.3]{EST}, since 
the result is $0$ therein. 
However, the same rules apply replacing this 
$0$ with the diagram being oriented anticlockwise and 
coefficient
\begin{gather}\label{eq:doubledotsign}
\parto\cdot\parsign^{\length_\Lambda(\gamma_i^{\rm dot})}\cdot 
\parsign^{\length_\Lambda(\gamma_j^{\rm dot})}. 
\end{gather}

\noindent\textbf{Nested Merge.} The nested 
circles $C_i$ and $C_j$ (with notation as before) are merged into $C$. Denote 
by $C_{\rm in}$ the inner of the two original circles. 
Then:

\noindent\textit{$\succ$ Both circles oriented anticlockwise.} 
As in \cite[Section 3.3]{EST} with coefficient
\begin{gather}\label{eq:do-not-forget-the-saddle}
\parsign \cdot \parsign^{\neatfrac{1}{4}(\length_\Lambda(C_{\rm in})-2)}\cdot \parsign^{s_\Lambda(\gamma)}.
\end{gather}

\noindent\textit{$\succ$ One circle 
oriented clockwise, one oriented anticlockwise.} 
The coefficient is
\begin{gather}\label{eq:do-not-forget-the-saddle-2}
\parsign\cdot\parsign^{\length_\Lambda(\gamma_k^{\rm dot})}\cdot\parsign^{\neatfrac{1}{4}(\length_\Lambda(C_{\rm in})-2)}\cdot \parsign^{s_\Lambda(\gamma)}.
\end{gather}

\noindent\textit{$\succ$ Both circles oriented clockwise.} 
As above, this case does not appear in \cite[Section 3.3]{EST} 
since the result is $0$. However, 
we again use the diagram with the anticlockwise orientation and coefficient
\begin{gather}\label{eq:do-not-forget-the-saddle-3}
\parto\cdot\parsign \cdot\parsign^{\length_\Lambda(\gamma_i^{\rm dot})}\cdot \parsign^{\length_\Lambda(\gamma_j^{\rm dot})} \cdot \parsign^{\neatfrac{1}{4}(\length_\Lambda(C_{\rm in})-2)}\cdot \parsign^{s_\Lambda(\gamma)}.
\end{gather}

\noindent\textbf{Non-nested Split.} The circle $C$ 
splits into the non-nested circles $C_i$ and 
$C_j$---containing vertices $i$ respectively $j$.

\noindent\textit{$\succ$ $C$ oriented anticlockwise.}
The coefficients are now
\begin{gather}\label{eq:new-dot1}
\paro\cdot\parsign^{\length_\Lambda(\gamma_i^{\rm ndot})}\cdot\parsign^{s_\Lambda(\gamma)},
\end{gather}
while the one where $C_j$ is oriented clockwise is multiplied with 
\begin{gather}\label{eq:new-dot2}
\parsign\cdot\paro\cdot
\parsign^{\length_\Lambda(\gamma_j^{\rm ndot})}\cdot\parsign^{s_\Lambda(\gamma)}.
\end{gather}

\noindent\textit{$\succ$ $C$ oriented clockwise.}
Multiply the copy with both circles oriented clockwise by 
\begin{gather}\label{eq:new-dot3}
\paro\cdot 
\parsign^{\length_\Lambda(\gamma_j^{\rm dot})}\cdot
\parsign^{\length_\Lambda(\gamma_i^{\rm ndot})}\cdot\parsign^{s_\Lambda(\gamma)}
\end{gather}
and the copy with both circles oriented anticlockwise by
\begin{gather}\label{eq:new-dot4}
\parto\cdot\parsign\cdot\paro\cdot
\parsign^{\length_\Lambda(\gamma_j^{\rm dot})}
\cdot\parsign^{\length_\Lambda(\gamma_j^{\rm ndot})}\cdot\parsign^{s_\Lambda(\gamma)}.
\end{gather}

\noindent\textbf{Nested Split.} We use here the same 
notations as in the non-nested split case, and 
we denote by $C_{\rm in}$ and $C_{\rm out}$ the inner and outer 
of the two circles $C_i$ and $C_j$.

\noindent\textit{$\succ$ $C$ oriented anticlockwise.}
As for the non-nested split, but 
the copy where $C_{\rm in}$ is oriented clockwise is multiplied with
\begin{gather}\label{eq:new-dot5}
\paro \cdot \parsign^{\neatfrac{1}{4}(\length_\Lambda(C_{\rm in})-2)},
\end{gather}
while the copy where $C_{\rm out}$ is oriented clockwise is multiplied with 
\begin{gather}\label{eq:new-dot6}
\parsign \cdot \paro \cdot \parsign^{\neatfrac{1}{4}(\length_\Lambda(C_{\rm in})-2)}.
\end{gather}

\noindent\textit{$\succ$ $C$ oriented clockwise.}
Multiply the copy with both circles oriented clockwise by
\begin{gather}\label{eq:new-dot7}
\paro \cdot \parsign^{\neatfrac{1}{4}(\length_\Lambda(C_{\rm in})-2)},
\end{gather}
and the one with both circles oriented anticlockwise by
\begin{gather}\label{eq:new-dot8}
\parto \cdot \parsign \cdot \paro \cdot \parsign^{\neatfrac{1}{4}(\length_\Lambda(C_{\rm in})-2)}.
\end{gather}

\begin{example}\label{ex:multiplication}
In a simple, non-nested merge we have no coefficients at all:
\begin{gather}\label{eq:mult1}
\xy
\xymatrix{
\raisebox{.08cm}{\xy
(0,0)*{\includegraphics[scale=.65]{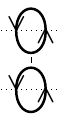}};
\endxy}
\ar@{|->}[r]
&
\raisebox{.08cm}{\xy
(0,0)*{\includegraphics[scale=.65]{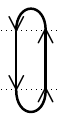}};
\endxy}
\bigg(
\ar@{|->}[r]
&
\raisebox{.08cm}{\xy
(0,0)*{\includegraphics[scale=.65]{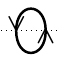}};
\endxy}
\bigg)
}
\endxy
\end{gather}
The rightmost step above, 
called \textit{collapsing}, is always performed 
at the end of a multiplication procedure and is 
not displayed in what follows.
\end{example}

\begin{example}\label{ex:multiplication2}
Here is an example where coefficients occur.
Consider a merge of two 
anticlockwise, nested circles:
\begin{gather}\label{eq:mult2}
\begin{aligned}
\xy
\xymatrix@C-10pt{
\raisebox{.08cm}{\xy
(0,0)*{\includegraphics[scale=.65]{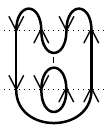}};
\endxy}
\ar@{|->}[r]
&
\raisebox{.08cm}{\xy
(0,0)*{\includegraphics[scale=.65]{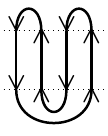}};
\endxy}
}
\endxy
\quad\text{and}\quad
\xy
\xymatrix@C-10pt{
\raisebox{-0.1cm}{
\raisebox{.08cm}{\xy
(0,0)*{\includegraphics[scale=.65]{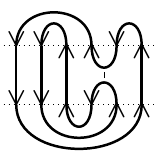}};
\endxy}
}
\ar@{|->}[r]
&
\parsign
\raisebox{-0.1cm}{
\raisebox{.08cm}{\xy
(0,0)*{\includegraphics[scale=.65]{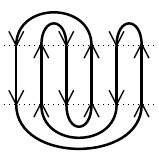}};
\endxy}
}
}
\endxy
\end{aligned}
\end{gather}
Here we have $s_\Lambda(\gamma)=1$, but 
$\neatfrac{1}{4}(\length_\Lambda(C_{\rm in})-2)=0$ for the left multiplication step and 
$\neatfrac{1}{4}(\length_\Lambda(C_{\rm in})-2)=1$ for the right multiplication step. 
\end{example}

\begin{example}
In both examples given now a non-nested merge is performed, followed by a 
split into two non-nested 
respectively nested circles. First, the H-shape:
\begin{gather}\label{eq:mult3}
\begin{aligned}
&\xy
\xymatrix{
\raisebox{.08cm}{\xy
(0,0)*{\includegraphics[scale=.65]{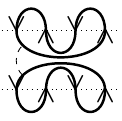}};
\endxy}
\ar@{|->}[r]
&
\raisebox{.08cm}{\xy
(0,0)*{\includegraphics[scale=.65]{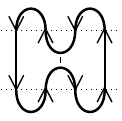}};
\endxy}
\ar@{|->}[r]
&
\parsign\paro
\raisebox{.08cm}{\xy
(0,0)*{\includegraphics[scale=.65]{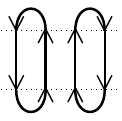}};
\endxy}
+\parsign^3\paro
\raisebox{.08cm}{\xy
(0,0)*{\includegraphics[scale=.65]{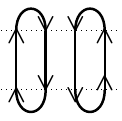}};
\endxy}
}
\endxy \\
&\xy
\xymatrix{
\raisebox{.08cm}{\xy
(0,0)*{\includegraphics[scale=.65]{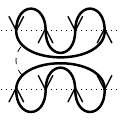}};
\endxy}
\ar@{|->}[r]
&
\raisebox{.08cm}{\xy
(0,0)*{\includegraphics[scale=.65]{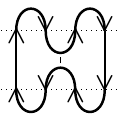}};
\endxy}
\ar@{|->}[r]
&
\parsign\paro 
\raisebox{.08cm}{\xy
(0,0)*{\includegraphics[scale=.65]{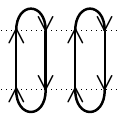}};
\endxy}
+ \parto\parsign^3\paro
\raisebox{.08cm}{\xy
(0,0)*{\includegraphics[scale=.65]{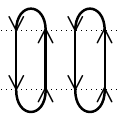}};
\endxy}
}
\endxy
\end{aligned}
\end{gather}
Here we have $s_\Lambda(\gamma)=1$, but $\length_\Lambda(\gamma_1^{\rm ndot})=0$ 
and $\length_\Lambda(\gamma_2^{\rm ndot})=1$. 
Moreover, $\length_\Lambda(\gamma_2^{\rm dot})=0$ in the bottom case.
Next, the C shape.
\begin{gather}\label{eq:mult4}
\begin{aligned}
&\xy
\xymatrix{
\raisebox{.08cm}{\xy
(0,0)*{\includegraphics[scale=.65]{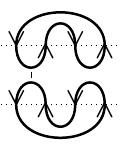}};
\endxy}
\ar@{|->}[r]
&
\raisebox{.08cm}{\xy
(0,0)*{\includegraphics[scale=.65]{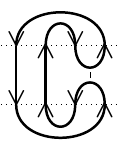}};
\endxy}
\ar@{|->}[r]
&
\paro
\raisebox{.08cm}{\xy
(0,0)*{\includegraphics[scale=.65]{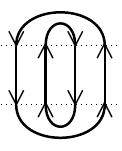}};
\endxy}
+ 
\parsign\paro
\raisebox{.08cm}{\xy
(0,0)*{\includegraphics[scale=.65]{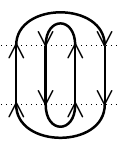}};
\endxy}
}
\endxy \\
&\xy
\xymatrix{
\raisebox{.08cm}{\xy
(0,0)*{\includegraphics[scale=.65]{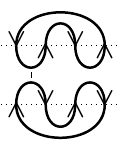}};
\endxy}
\ar@{|->}[r]
&
\raisebox{.08cm}{\xy
(0,0)*{\includegraphics[scale=.65]{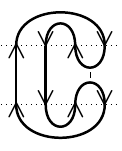}};
\endxy}
\ar@{|->}[r]
&
\paro
\raisebox{.08cm}{\xy
(0,0)*{\includegraphics[scale=.65]{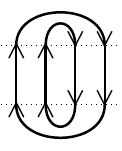}};
\endxy}
+
\parto\parsign\paro
\raisebox{.08cm}{\xy
(0,0)*{\includegraphics[scale=.65]{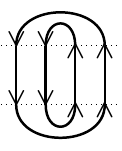}};
\endxy}
}
\endxy
\end{aligned}
\end{gather}
Here $\neatfrac{1}{4}(\length_\Lambda(C_{\rm in})-2)=0$, 
and again $\length_\Lambda(\gamma_2^{\rm dot})=0$ for the bottom.
\end{example}

\begin{remark}\label{remark:noCs}
The $\reflectbox{\text{C}}$ shape cannot appear 
as long as we impose the choice of the 
order of cup-cap pairs from left to right in the surgery 
procedure.
\end{remark}

Finally, recall the notion of an arc bimodule \cite[Section 3.4]{EST} which 
is a a graded, free $\ring$-module supported on the set 
$\CUPBasis{\compmatch,\compmatcht}$ of 
$\compmatch=(\Lambda_{0},\dots,\Lambda_{r})$-composite matchings, i.e. 
arc diagrams with bottom sequence $\compmatch$ and top sequence 
$\compmatcht$. Further, we need to shift by 
counting the number of ups $\mathrm{up}(\Lambda_0)$ 
and downs $\mathrm{down}(\Lambda_0)$ in $\Lambda_{0}$.
In formulas,
\begin{gather}\label{eq:arc-bimodule}
\Mo[\parameterS](\compmatch,\compmatcht) 
=\ring
\CUPBasis{\compmatch,\compmatcht} 
\langle-(\mathrm{up}(\Lambda_0)+\mathrm{down}(\Lambda_0))\rangle,
\end{gather}
which shares the very same properties as 
web bimodules, but these are always free of finite rank.
(In some sense they are reduced versions of web bimodules.)

\begin{definition}\label{definition:catbimodules}
Let $\Modpgr{\Arcalg}$ be the additive closure of the following graded, $\ring$-linear $2$-category:
\begin{enumerate}[label=(\roman*)]

\item Objects are the various $\Lambda\in\bblock$.

\item $1$-morphisms are finite direct sums and 
tensor products (taken over the algebra $\Arcalg$) 
of the $\Arcalg$-bimodules 
$\Mo[\parameterS](\compmatch,\compmatcht)$.

\item $2$-morphisms are $\Arcalg$-bimodule 
homomorphisms.

\item The composition of arc bimodules is 
the tensor product ${}_-\otimes_{\Arcalg}{}_-$. 
The vertical composition of $\Arcalg$-bimodule 
homomorphisms is 
the usual composition. The horizontal composition is 
given by tensoring (over $\Arcalg$).

\end{enumerate}
\end{definition}
\section{Isomorphisms, equivalences and their consequences}\label{sec:iso}
This section has two main goals. First, 
we will construct an isomorphism 
of graded algebras
$\Iso{}{}\colon\webalg[\parameterS]^{\based}\stackrel{\cong}{\longrightarrow}\Arcalg$, 
where $\webalg[\parameterS]^{\based}$ 
is a certain subalgebra of $\webalg[\parameterS]$ 
defined in \eqref{eq:newwebalgs}. (In fact, this isomorphism is our only reason to restrict to $\parameterS$ instead of working with $\parameter$.)

From this we obtain (with $\web({}_-)$ as in \eqref{eq:webmap}):

\begin{theorem}\label{theorem:matchalgebras}
There is an equivalence of graded, $\ring$-linear
$2$-categories 
\begin{gather}\label{eq:equi-2-cat}
\boldsymbol{\Iso{}{}}\colon\webcatS\stackrel{\cong}{\longrightarrow}\Modpgr{\Arcalg}
\end{gather} 
induced by $\Iso{}{}$
under which the web bimodules 
$\M[\parameterS](\web(\compmatch,\compmatcht))$ and the 
arc bimodules $\Mo[\parameterS](\compmatch,\compmatcht)$ 
are identified.
\end{theorem}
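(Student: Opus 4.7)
The plan is to bootstrap the 2-categorical equivalence directly from the algebra isomorphism $\Iso{}{}$ stated at the beginning of the section. First I would recall the basic principle that any isomorphism $\somealg \stackrel{\cong}{\to} \someotheralg$ of (locally unital) graded algebras induces, by restriction of scalars, a canonical isomorphism of graded 2-categories of bimodules which preserves tensor products over the algebra, bimodule homomorphisms, and gradings on the nose. Applied to $\Iso{}{}\colon \webalg[\parameterS]^{\based}\stackrel{\cong}{\to}\Arcalg$ this already produces a candidate equivalence of 2-categories of all graded bimodules.

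Second, I would bridge the gap between $\webcatS$, which is built from the full algebra $\webalg[\parameterS]$, and the based version $\webcatScirc$ built from $\webalg[\parameterS]^{\based}$. As indicated in \fullref{remark:another-one}, passing to closures with basis webs only does not change the relevant hom spaces and yields an equivalence $\webcatS \cong \webcatScirc$ (a consequence of \fullref{proposition:cats-are-equal} once we know the finite-dimensionality from \fullref{corollary:it-is-a-basis2}). Composing this equivalence with the one induced by $\Iso{}{}$ gives the desired 2-functor $\boldsymbol{\Iso{}{}}$.

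Third, I would show that under $\boldsymbol{\Iso{}{}}$ the distinguished 1-morphisms match, i.e.\ that $\M[\parameterS](\web(\compmatch,\compmatcht))$ is sent to $\Mo[\parameterS](\compmatch,\compmatcht)$. Concretely this reduces to producing an explicit graded $\ring$-linear bijection of cup foam bases (from \fullref{lemma:it-is-a-basis2}) with the diagrammatic bases from \eqref{eq:basis-bimodule}, and then checking that it intertwines both the left and the right action. This is the statement of \fullref{lemma:matchvs} already used in \fullref{prop:BKbimodule}. Combined with the standard identification $\M[\parameterS](u)\otimes_{\webalg[\parameterS]}\M[\parameterS](v)\cong \M[\parameterS](uv)$ (the Khovanov-style argument as in \cite[Theorem~1]{Khov}, which is parameter free), this gives compatibility with horizontal composition up to canonical isomorphism, since on the arc side composition of composite matchings is modelled by vertical stacking.

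The main obstacle is the third step: matching the combinatorial multiplication rules on the $\Arcalg$-side, with all the dot moving, topological, and saddle signs from \eqref{eq:thesigns}, against the topological surgery on web bimodules used to define the $\webalg[\parameterS]$-action. This is where the isomorphism $\Iso{}{}$ does all the work, and the bulk of the proof will consist of verifying, surgery case by surgery case, that the coefficients encoded in the foam/cup-foam basis calculus coincide with those prescribed in \fullref{subsec:multparameters}. Once this identification of bimodules is established, well-definedness on 2-morphisms, faithfulness, fullness, and essential surjectivity are automatic from the algebra isomorphism, and the statement for arbitrary specializations of $\parameterS$ follows because the whole argument is natural in the ground ring.
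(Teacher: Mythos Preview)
Your approach is essentially the same as the paper's: reduce from the full to the based web algebra, invoke the algebra isomorphism $\Iso{}{}$ of \fullref{theorem:matchalgebrasold}, and then identify the distinguished bimodules via \fullref{lemma:matchvs}. The paper's proof is correspondingly short; the case-by-case surgery verification you describe in your fourth paragraph is exactly the content of \fullref{theorem:matchalgebrasold}, not of \fullref{theorem:matchalgebras} itself, so once that theorem is in hand there is nothing further to check beyond observing that the actions on the bimodules are built from the same surgeries.

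There is, however, a genuine wrinkle in your second step. You claim the passage $\webcatS \cong \webcatScirc$ follows from \fullref{proposition:cats-are-equal} together with the finite-rank statement of \fullref{corollary:it-is-a-basis2}. But \fullref{proposition:cats-are-equal} concerns the embedding of the \emph{foam} $2$-category into the web bimodule $2$-category and says nothing directly about comparing the full and based bimodule $2$-categories; moreover \fullref{remark:another-one} explicitly warns that upgrading that embedding to an equivalence is non-trivial and is deferred to \fullref{proposition:cats-are-equal-yes}, whose proof in turn uses the isomorphisms of \fullref{sec:iso}. Citing it here would be circular. The paper sidesteps this by instead observing that $\webalg[\parameterS]_{\vec{k}}$ and $\webalg[\parameterS]^{\based}_{\vec{k}}$ are graded Morita equivalent (arguing as in \cite[Proof of Theorem~4.1]{EST}), which immediately yields the needed equivalence of bimodule $2$-categories. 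With this substitution your outline is correct.
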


Second, let $R[\parto]$ be a ring 
with a grading so that all $r\in R$ are of degree $0$ 
and $\parto$ is of degree $4$.
Let 
$\specS\colon\ring\to R[\parto]$ be any 
ring homomorphism with $\specS(\parto)=\parto$. 
Using the embedding $R[\parto]\to\ring$, set 
\begin{gather}\label{eq:scalars}
\ArcalgS[\parto,\specS(\parsign),\specS(\paro)]=
\ArcalgS_{R[\parto]}[\parto,\specS(\parsign),\specS(\paro)]
\otimes_{R[\parto]}\ring.
\end{gather}
(We need these scalar extensions for technical reasons, e.g. to 
make statements as ``isomorphisms of $\ring$-algebras''. We omit the 
subscript for these.)
We show in \fullref{subsec:iso-alg}, 
where we explicitly construct the isomorphism from \eqref{eq:iso-quite-new}, 
the following.

\begin{theorem}\label{theorem:crazy-isomorphism2}
There is an isomorphism of graded $\ring$-algebras
\begin{gather}\label{eq:iso-quite-new}
\Isonew{}{}\colon\Arcalg\overset{\cong}{\longrightarrow}
\ArcalgS[\parto,\specS(\parsign),\specS(\paro)].
\end{gather}
\end{theorem}

Since we now have an isomorphism of graded $\ring$-algebras we immediately get equivalences of the associated bimodule $2$-categories:

\begin{theorem}\label{theorem:matchalgebrasnew}
Let $R[\parto],\specS$ and 
$\ArcalgS[\parto,\specS(\parsign),\specS(\paro)]$ be as above. 
There is an equivalence 
(which is, in fact, even an isomorphism) of graded, $\ring$-linear
$2$-categories 
\begin{gather}\label{eq:equi-2-cat-arc}
\boldsymbol{\Isonew{}{}}\colon\Modpgr{\Arcalg}
\stackrel{\cong}{\longrightarrow}\Modpgr{\ArcalgS[\parto,\specS(\parsign),\specS(\paro)]}
\end{gather}
induced by $\Isonew{}{}$
under which $\Mo[\parameterS](\compmatch,\compmatcht)$ 
and 
$\Mo[\parto,\specS(\parsign),\specS(\paro)](\compmatch,\compmatcht)$ 
are identified.
\end{theorem}

Taking \fullref{proposition:special}, the 
equivalences \eqref{eq:equi-2-cat} and \fullref{theorem:matchalgebrasnew} 
together (and working over $\Z[\parto,i]$), we obtain that $\F_{\Z[\parto,i]}[\KBN]$, 
$\F_{\Z[\parto,i]}[\Ca]$, $\F_{\Z[\parto,i]}[\CMW]$ and 
$\F_{\Z[\parto,i]}[\Bl]$ are all equivalent,
see \fullref{corollary:all-equivalent}.
\subsection{Web and arc algebras}\label{subsec:algmodel}
To prove  \fullref{theorem:matchalgebras} we first construct a graded algebra isomorphism
$\Iso{}{}\colon\webalg[\parameterS]^{\based}\to\Arcalg$. 
Recall from \cite[Section 4]{EST} that there is a bijection
\begin{gather}\label{eq:identification}
\bY\to\bblock,\quad\vec{k}\mapsto\Lambda,\quad\text{given by  }\;0\mapsto\circ,
\quad 1\mapsto\dummy,\quad 2\mapsto\times.
\end{gather}
Here $\circ,\dummy,\times$ are entries of ${\rm seq}(\Lambda)$ 
and $\Lambda$ is determined demanding that $\Lambda$ is balanced. 
We identify, using \eqref{eq:identification}, 
such $\vec{k}$'s and $\Lambda$'s in what follows.
Moreover, for 
$\Lambda\in\bblock$ and 
$\lambda\in\Lambda$, there is a unique web $\web(\lambda)$ 
associated to the cup diagram $\underline{\lambda}$, see \cite[Lemma 4.8]{EST}. 
That is, there is a map
\begin{gather}\label{eq:webmap}
\web({}_-)\colon\Lambda\to\CUP(\vec{k}),\quad\lambda\mapsto\web(\lambda)
\end{gather}
assigning to $\lambda$ its cup diagram $\underline{\lambda}$ and then an associated web, which depends on a choice. In fact, see \fullref{example:from-webs-to-arc-diagrams}, $\web({}_-)$ is a split of the evident map which assigns to each web an arc diagram.
Similarly, 
for each
$\compmatch$-composite matching $\compmatcht$ 
there is a unique associated web 
$\web(\compmatch,\compmatcht)$ (given by an analogous map). 
The images of these maps are called \textit{basis webs}. All the reader needs 
to know about these basis webs is summarized in 
\fullref{example:from-webs-to-arc-diagrams}
below. Details can be found in \cite[Section 4.1]{EST}.

\begin{example}\label{example:from-webs-to-arc-diagrams}
Given a web $u$, then we can associate to it 
an arc diagram $\cc{u}$ via
\begin{gather}\label{eq:webmap-2}
\xy
(0,0)*{\includegraphics[scale=.65]{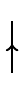}};
\endxy
\mapsto
\xy
(0,0)*{\includegraphics[scale=.65]{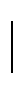}};
\endxy
\quad,\quad
\xy
(0,0)*{\includegraphics[scale=.65]{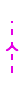}};
\endxy
\mapsto
\emptyset
\quad,\quad
\xy
(0,0)*{\includegraphics[scale=.65]{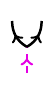}};
\endxy
\mapsto
\xy
(0,0)*{\includegraphics[scale=.65]{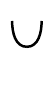}};
\endxy
\quad,\quad
\xy
(0,0)*{\includegraphics[scale=.65]{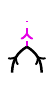}};
\endxy
\mapsto
\xy
(0,0)*{\includegraphics[scale=.65]{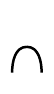}};
\endxy
\end{gather}
Since we do not consider any relations on the set of webs, isotopic webs are not equal and there are 
plenty of webs giving the same arc diagram, but there is a preferred choice 
of a preimage which defines our split $\web({}_-)$. 
An example is
\begin{gather}\label{eq:diagramexample}
\xy
(0,0)*{\includegraphics[scale=.65]{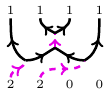}};
\endxy
\rightsquigarrow
\xy
(0,0)*{\includegraphics[scale=.65]{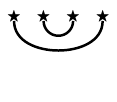}};
\endxy
\quad,\quad
\xy
(0,0)*{\includegraphics[scale=.65]{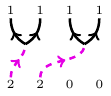}};
\endxy
\rightsquigarrow
\xy
(0,0)*{\includegraphics[scale=.65]{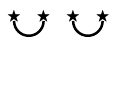}};
\endxy
\end{gather}
The only thing we need is the existence of this split, and that it is 
parameter independent.
\end{example}

Moreover, as indicated in \fullref{example:from-webs-to-arc-diagrams}, 
the set of basis webs
\begin{gather}\label{eq:fin-dim-webs}
\CUPB(\vec{k})=\{u\in\CUP(\vec{k})\mid u=\web(\lambda)\text{ for some }\lambda\in\Lambda\}
\end{gather}
is always a strict and finite subset of $\CUP(\vec{k})$.
Now, given $\lambda,\mu\in\Lambda$,
let us denote
\begin{gather}\label{eq:newwebalgs}
\webalg[\parameterS]^{\based}_{\vec{k}}=
{\textstyle\bigoplus_{u,v\in\CUPB(\vec{k})}}
{}_u(\webalg[\parameterS]_{\vec{k}})_v,
\quad\quad\webalg[\parameterS]^{\based}=
{\textstyle\bigoplus_{\vec{k}\in\bY}}
\webalg[\parameterS]^{\based}_{\vec{k}}.
\end{gather}
Clearly, $\webalg[\parameterS]^{\based}$ 
is a graded subalgebra of $\webalg[\parameterS]$ 
with \textit{based web bimodules} 
$\M[\parameterS]^{\based}(u)$ given as 
in \eqref{eq:the-web-bimodules}, but using 
$\CUPB(\vec{k})$ instead of $\CUP(\vec{k})$. 
We can view these as $\webalg[\parameterS]$-bimodules 
as well, and we thus, get a 
$2$-category $\webcatScirc$ consisting of 
based $\webalg[\parameterS]$- or $\webalg[\parameterS]^{\based}$-bimodules.

Recalling the cup foam bases, 
we have the following lemmas.

\begin{lemma}\label{lemma:matchvs}
Let $u,v\in\CUPB(\vec{k})$ be webs such that $u=\web(\lambda)$ and $v=\web(\mu)$.
There is an isomorphism of graded, free $\field$-modules
\begin{gather}\label{eq:isoalgebras1}
\Iso{uv}{\lambda\mu}\colon{}_u(\webalg[\parameterS]_{\vec{k}})_v\to 
{}_{\lambda}(\Arcalg_{\Lambda})_{\mu}
\end{gather}
which sends $\CUPbasis{u}{v}{\vec{k}}$ to $\CUPbasis{\lambda}{\mu}{\Lambda}$ 
by identifying 
the basis cup foams without dots with anticlockwise circles and 
the basis cup foams with dots with clockwise circles.

Let $u\in\Hom_{\F[\parameterS]}(\vec{k},\vec{l})$ 
be a web such that $u=\web(\compmatch,\compmatcht)$.
There is an isomorphism of graded, free $\ring$-modules
\begin{gather}\label{eq:isoalgebrasnext}
\Iso{u}{(\compmatch,\compmatcht)}\colon \M[\parameterS]^{\based}(u)\to
\Mo[\parameterS](\compmatch,\compmatcht)
\end{gather}
which sends $\CUPBasis{u}$ to $\CUPBasis{\compmatch,\compmatcht}$ 
by identifying 
the basis cup foams without dots with anticlockwise circles and 
the basis cup foams with dots with clockwise circles.
\end{lemma}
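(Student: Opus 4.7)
The plan is to exhibit $\Iso{uv}{\lambda\mu}$ (respectively $\Iso{u}{(\compmatch,\compmatcht)}$) directly on the level of the two bases, check that it is degree-preserving circle-by-circle, and then extend $\ring$-linearly. Since both the source and target are free $\ring$-modules of finite rank (by \fullref{lemma:it-is-a-basis}, \fullref{lemma:it-is-a-basis2}, and the definition of $\Arcalg$), showing that the assignment restricts to a bijection of homogeneous bases yields the desired isomorphism.

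First, I would set up the diagrammatic dictionary between $uv^{\ast}$ and $\underline{\lambda}\overline{\mu}$ coming from \fullref{example:from-webs-to-arc-diagrams}: since $u=\web(\lambda)$ and $v=\web(\mu)$ are basis webs, collapsing every phantom edge in $uv^{\ast}$ produces exactly the circle diagram $\underline{\lambda}\overline{\mu}$, and this sets up a canonical bijection between the circles in $uv^{\ast}$ and the circles in $\underline{\lambda}\overline{\mu}$. Recall that a cup foam basis element $f\in\CUPbasis{u}{v}{\vec{k}}$ is, by \fullref{remark:cupbasis}, obtained from a fixed evaluation of $uv^{\ast}$ by placing, for each circle $C$ of $uv^{\ast}$, either no dot or a single dot on the chosen facet at the basis point $\blacklozenge$. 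Therefore the set $\CUPbasis{u}{v}{\vec{k}}$ is in natural bijection with the set of decorations of the circles of $uv^{\ast}$ by an element of $\{\text{no dot},\text{dot}\}$, while $\CUPbasis{\lambda}{\mu}{\Lambda}$ is in natural bijection with the set of decorations of the circles of $\underline{\lambda}\overline{\mu}$ by an element of $\{\text{anticlockwise},\text{clockwise}\}$. Sending ``no dot'' to ``anticlockwise'' and ``dot'' to ``clockwise'' gives the map $\Iso{uv}{\lambda\mu}$ on bases, and by construction it is a bijection.

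Next, I would verify that this bijection is homogeneous. For a circle $C$ of $\underline{\lambda}\overline{\mu}$, formula \eqref{eq:degree-needed-later} says that its contribution to $\deg(\underline{\lambda}\nu\overline{\mu})$ equals (number of cups in $C$) $\pm 1$, with $+1$ for clockwise and $-1$ for anticlockwise. On the foam side, the contribution of the corresponding dotted cup foam piece to $\mathrm{deg}(f)$ as in \fullref{definition:foamydegree} can be read off from the chain of isomorphisms \eqref{eq:cup-foam-basis-isos} used in \fullref{remark:cupbasis}: each nested cup adds $-1$ to the degree of the undotted cup foam, and placing a dot contributes $+2$, so that a dotted (respectively undotted) cup foam piece has degree (number of cups) $+1$ (respectively $-1$). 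These contributions sum over circles, and the global shifts $d(\vec{k})$ on the web side and $\mathrm{up}(\Lambda_k)+\mathrm{down}(\Lambda_k)$ on the arc side match by a direct bookkeeping comparison (they are both controlled by the number of cups in $u$, equivalently by $|\Lambda^\basedd|$). Hence $\Iso{uv}{\lambda\mu}$ is a grading-preserving bijection of homogeneous bases and extends $\ring$-linearly to the desired isomorphism of graded free modules.

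The proof of the bimodule statement \eqref{eq:isoalgebrasnext} is essentially identical: the basis $\CUPBasis{u}$ is defined in \fullref{definition:cupbasis2} by the same dot-placement recipe applied to all webs of the form $v u w^{\ast}$ with $v\in\CUPB(\vec{k})$, $w\in\CUPB(\vec{l})$, while $\CUPBasis{\compmatch,\compmatcht}$ is by \eqref{eq:basis-bimodule} indexed by orientations of the circles in $\underline{\lambda}(\compmatcht,\compmatchn)\overline{\mu}$. The same dot/orientation correspondence, applied circle by circle (including circles that cross the composite matching $\compmatcht$), gives the bijection; degrees match by the same argument as above, now using the formula \eqref{eq:degree-needed-later} also for circles containing matching arcs and noting that in \fullref{definition:bimodules} the relevant length contributions come from precisely the same local moves that correspond to the non-trivial arcs in \eqref{eq:basic_moves}. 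The only subtlety is keeping track of the global shift $\{-(\mathrm{up}(\Lambda_k)+\mathrm{down}(\Lambda_k))\}$, and I expect this to be the one point in the argument that actually needs to be written out in full; everything else is a direct comparison with the already established cup foam basis statements. Since the construction and the degree computation are manifestly parameter-free, the same proof yields the result for every specialization of $\parameterS$.
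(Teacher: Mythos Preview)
Your proposal is correct and follows essentially the same approach as the paper, which simply cites \cite[Lemmas~4.15 and~4.16]{EST} and observes that both the construction of the bases and the arguments there are parameter independent. You have effectively reconstructed the content of that citation: the bijection of bases via the dot/no-dot $\leftrightarrow$ anticlockwise/clockwise dictionary, together with a circle-by-circle degree comparison using \eqref{eq:degree-needed-later} on the arc side and \fullref{definition:foamydegree} on the foam side.

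One small remark: in the algebra case the shift sits on the web side (the $\{d(\vec{k})\}$ in \fullref{definition:webalg2}) while ${}_{\lambda}(\Arcalg_{\Lambda})_{\mu}$ carries no shift, whereas in the bimodule case it is the arc side that carries $\{-(\mathrm{up}(\Lambda_k)+\mathrm{down}(\Lambda_k))\}$; your paragraph conflates these slightly. This does not affect the argument, since in both cases the relevant shift equals the number of cups in the underlying cup diagram and matches the Euler-characteristic contribution of the undotted cup foam, but it is worth keeping the two bookkeeping checks separate when you write it out.
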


\begin{proof}
The arguments used in \cite[Lemmas 4.15 and 4.16]{EST} 
as well as the construction of the two bases in question 
are parameter independent. Thus, 
\cite[Lemmas 4.15 and 4.16]{EST} work verbatim and the claim follows. 
\end{proof}

\begin{lemma}\label{lemma:matchvs2}
For any $\lambda,\mu\in\Lambda$ and $u=\web(\lambda), v=\web(\mu)$: the isomorphisms
$\Iso{uv}{\lambda\mu}$ 
from \eqref{eq:isoalgebras1} 
extend to isomorphisms of graded, free 
$\ring$-modules
\begin{gather}\label{eq:isoalgebras2}
\Iso{\vec{k}}{\Lambda}\colon\webalg[\parameterS]^{\based}_{\vec{k}}\to\Arcalg_{\Lambda},\quad\quad
\Iso{}{}\colon\webalg[\parameterS]^{\based}\to\Arcalg.
\end{gather}
\end{lemma}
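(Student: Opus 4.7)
The statement is essentially a direct-sum assembly of the pointwise isomorphisms from \eqref{eq:isoalgebras1}, so my plan is brief and routine.

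First I would invoke the bijection \eqref{eq:identification} between $\bY$ and $\bblock$ together with the map $\web(\cdot)\colon\Lambda\to\CUP(\vec{k})$ from \eqref{eq:webmap}. The key input, recorded in \cite[Section~4.1]{EST} and reflected in \eqref{eq:fin-dim-webs}, is that $\web(\cdot)$ restricts to a bijection between $\Lambda^\basedd$ and $\CUPB(\vec{k})$: the cup diagrams $\underline{\lambda}$ appearing for $\lambda\in\Lambda^\basedd$ are exactly those without rays, which correspond to the preferred basis webs in $\CUPB(\vec{k})$. This identification is parameter independent, so it applies verbatim in the $\parameterS$-setting.

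Next, using this bijection I would define
\[
\Iso{\vec{k}}{\Lambda}={\textstyle\bigoplus_{u,v\in\CUPB(\vec{k})}}\Iso{uv}{\lambda\mu},
\]
where $\lambda,\mu\in\Lambda^\basedd$ are the weights with $\web(\lambda)=u$, $\web(\mu)=v$. By definition \eqref{eq:newwebalgs}, the left-hand side is exactly $\webalg[\parameterS]^{\based}_{\vec{k}}$, and by \eqref{eq:basis2} the target decomposes as $\Arcalg_\Lambda=\bigoplus_{\lambda,\mu\in\Lambda^\basedd}{}_\lambda(\Arcalg_\Lambda)_\mu$. So $\Iso{\vec{k}}{\Lambda}$ is an $\ring$-module isomorphism as a direct sum of the $\ring$-module isomorphisms from \fullref{lemma:matchvs}. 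The further direct sum
\[
\Iso{}{}={\textstyle\bigoplus_{\vec{k}\in\bY}}\Iso{\vec{k}}{\Lambda}
\]
over the identification $\bY\leftrightarrow\bblock$ then gives the second map. Freeness on both sides is guaranteed by \fullref{lemma:it-is-a-basis} (for the web side) and the definition \eqref{definition:genericarcalgebra} (for the arc side).

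Finally, one needs to check that $\Iso{\vec{k}}{\Lambda}$ respects the grading, but this is already built in: each summand $\Iso{uv}{\lambda\mu}$ is stated as an isomorphism of graded free $\ring$-modules in \fullref{lemma:matchvs}, matching cup foam basis elements with oriented circle diagrams of equal degree (anticlockwise $\leftrightarrow$ dotless, clockwise $\leftrightarrow$ dotted). There is no real obstacle here since the algebra multiplication is not yet part of the claim; that content is reserved for \fullref{theorem:matchalgebrasold} which identifies the multiplicative structures, and for \fullref{corollary:matchalgebras1} which then yields associativity of $\boldsymbol{\rm mult}$. The argument also carries over unchanged to any specialization of $\parameterS$ since both bases and the bijection $\web(\cdot)$ are parameter-independent.
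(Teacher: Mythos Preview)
Your proposal is correct and takes essentially the same approach as the paper: the paper's proof is the single line ``Clear by \fullref{lemma:matchvs}'', and you have simply spelled out what ``clear'' means---assembling the pointwise isomorphisms as a direct sum over the bijection $\CUPB(\vec{k})\leftrightarrow\Lambda^\basedd$ and over $\bY\leftrightarrow\bblock$. Your added detail about gradedness, freeness, and parameter-independence is accurate and harmless, though the paper deems it unnecessary to write out.
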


\begin{proof}
Clear by \fullref{lemma:matchvs}.
\end{proof}

\begin{theorem}\label{theorem:matchalgebrasold}
The maps from \eqref{eq:isoalgebras2} are isomorphisms of graded algebras.
\end{theorem}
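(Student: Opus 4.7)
The map $\Iso{}{}$ is already a graded $\ring$-linear isomorphism of free modules by \fullref{lemma:matchvs2}, and it carries the idempotents ${}_u\boldsymbol{1}_u$ corresponding to $\web(\lambda)$ to ${}_\lambda\mathbbm{1}_\lambda$. So the plan is to verify that $\Iso{}{}$ intertwines the multiplication foam composition on $\webalg[\parameterS]^{\based}$ with the combinatorial multiplication $\boldsymbol{\rm mult}$ on $\Arcalg$. Since both multiplications are defined by iterating a local surgery at the leftmost admissible cup-cap pair and multiplication foams compose as well, it suffices to verify the identification \emph{one surgery step at a time}. Concretely, given a surgery step replacing a diagram $D_r = \underline{\lambda}d^{\ast}d\overline{\eta}$ by $D_{r+1}$, one must compute the image of the corresponding local saddle foam \eqref{eq:ssaddle} (tensored with identity foams elsewhere) applied to each cup foam basis element of $\M[\parameterS]^{\based}(\cdot)$, then re-expand the result in the cup foam basis for $D_{r+1}$, and finally check that the coefficients that appear coincide exactly with the ones prescribed in \fullref{subsec:multparameters}.

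The reduction to local cases would go as follows. First I would fix a surgery cup-cap pair and note that, by \fullref{lemma:it-is-a-basis2}, the whole web bimodule is $\field$-spanned by cup foams; a cup foam is determined by its dot pattern on the orbits of facets determined by the circle components of the underlying web, and under $\Iso{}{}$ dotted/undotted facets correspond to clockwise/anticlockwise circles. Thus the effect of the saddle on a cup foam depends only on the two circles of $D_r$ that are touched by the surgery, on their nesting behavior, and on the presence/absence of dots on those two facets. This gives exactly the eight cases enumerated in \fullref{subsec:multparameters} (Merge vs.\ Split, nested vs.\ non-nested, times the possible orientation/dot configurations). For each case one applies the foam relations from \fullref{lemma:moreandmore}, \fullref{lemma:TQFTyetagain} and \fullref{lemma:morerelations}: the dot-removing relation \eqref{eq:theusualrelations1b} produces the factor $\parto$ whenever two dotted facets are merged; the ordinary neck cutting \eqref{eq:theusualrelations2} produces a two-term sum in the splits; the singular neck cutting \eqref{eq:neckcut} and closed seam removal \eqref{eq:closedseam} produce the factor $\paro$ (via $\specs(\parep\parome^{-1})=\paro$ under \eqref{eq:specialization}); and the dot migration \eqref{eq:dotmigration} produces the sign $\parsign$.

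The accounting of the \emph{signs} is the substantive part of the argument. The dot moving signs $\parsign^{\length_\Lambda(\gamma^{\rm dot})}$ and $\parsign^{\length_\Lambda(\gamma^{\rm ndot})}$ arise because the cup foam basis puts its dots at the fixed basis points $\blacklozenge$ on each circle (cf.\ \fullref{remark:cupbasis}); after the saddle is applied, dots that sit on the ``wrong'' facet must be transported along the newly merged/split facet to the prescribed basis points, and each such transport across a singular seam costs one factor of $\parsign$ by \eqref{eq:dotmigration}. The topological sign $\parsign^{\tfrac14(\length_\Lambda(C_{\rm in})-2)}$ for nested cases arises because the inner circle is turned ``inside out'' by the saddle, and rewriting the resulting upside-down cup foam in the standard cup foam basis requires using the phantom squeeze \eqref{eq:phantom-squeezing} once per two units of length along $C_{\rm in}$, each costing a factor $\parsign$ via $\specs(\parep\parome^{-2})=\parsign$. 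The saddle sign $\parsign^{s_\Lambda(\gamma)}$ comes from normalizing the physical saddle \eqref{eq:ssaddle} with respect to the chosen evaluation in \eqref{eq:cup-foam-basis-isos}. Once all three topological signs are shown to match case by case, the identification of multiplications is immediate, and transport of structure via $\Iso{}{}$ automatically yields associativity of $\Arcalg$ (as noted after \fullref{proposition:BlanchetKhovanov}).

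The main obstacle will be the bookkeeping for \textbf{nested} cases, where the inner circle's orientation, dot pattern and length all contribute simultaneously and interact with the choice of basis point $\op{t}(C)$ from \eqref{eq:choice-rightmost}. The cleanest way to handle this would be to first treat the two-vertex ``local models'' completely (as illustrated in \eqref{eq:mult1}--\eqref{eq:mult4}) with all the relations written out, and then argue that enlarging the ambient block only rescales distances by an even amount—so the parities of $\length_\Lambda$ are independent of the chosen enlargement once one fixes, compatibly, the basis points and the evaluation order used to define the cup foam basis. Well-definedness of $\boldsymbol{\rm mult}$ (independence of the order of surgeries) then follows from the corresponding topological property of multiplication foams, established in \fullref{lemma:multweb}, which is exactly the statement promised in the remark following \fullref{proposition:BlanchetKhovanov}. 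This local-to-global reduction, together with an exhaustive check of the eight local cases above, establishes that \eqref{eq:isoalgebras2} is an algebra isomorphism, and the same proof applies verbatim to any specialization of $\parameterS$ since none of the steps above depend on the specific ring $\ring$.
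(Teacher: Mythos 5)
Your proposal follows essentially the same route as the paper's proof: reduce to a single surgery step via \fullref{lemma:matchvs2}, split into the four merge/split, nested/non-nested cases with their dot/orientation configurations, rewrite the result of the saddle foam in the cup foam basis using \eqref{eq:theusualrelations1b}, \eqref{eq:neckcut}, \eqref{eq:closedseam}, \eqref{eq:phantom-squeezing} and \eqref{eq:dotmigration}, and match the three signs of \eqref{eq:thesigns} against the topological normalizations (the paper organizes the generalization as basic shape, then minimal saddle, then general saddle, which amounts to your local-model-plus-enlargement step). The attributions of $\parto$, $\paro$ and the three $\parsign$-signs to the specific foam relations are all consistent with the paper's computation, so the proposal is correct and not genuinely different.
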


The non-trivial proof 
of \fullref{theorem:matchalgebrasold} is given in 
\fullref{subsec:proof-one}
\begin{remark}\label{remark:why-parameterS}
We use the specialization of the parameters $\parameter$ to 
$\parameterS$ to not 
having to worry about the difference between the 
``directions'' in which we squeeze, migrate dots 
or perform ordinary-to-phantom neck cutting. 
Being more careful with the performed steps in the topological 
rewriting process  
leads to an analog of \fullref{theorem:matchalgebrasold} 
for $\parameter$ as well. Since this would require the introduction 
of some involved (but straightforward) notions for the 
diagram combinatorics keeping track of 
directions, we have decided, for brevity and clearness, 
to only do the $\parameterS$ case here---which includes 
our list of examples from \eqref{eq:main-specs} anyway.
Moreover, we could also relax the condition of webs being 
upwards oriented. This makes the involved scalars 
more cumbersome, and we decided not to pursue this direction further.
\end{remark}

A direct consequence of \fullref{theorem:matchalgebrasold} is that the algebraic multiplication rule is actually topologically in nature:

\begin{corollary}\label{corollary:matchalgebras1}
The multiplication rule of $\Arcalg_{\Lambda}$ 
is independent of the order in 
which the surgeries are performed. In particular, $\Arcalg_{\Lambda}$ is 
a graded, associative, unital algebra.
\end{corollary}

We are now ready to prove our first main result, i.e. the equivalence 
from \eqref{eq:equi-2-cat}.

\begin{proof}[Proof of \fullref{theorem:matchalgebras}]
The algebras $\webalg[\parameterS]_{\vec{k}}$ and 
$\webalg[\parameterS]^{\based}_{\vec{k}}$ 
are graded Morita equivalent (this 
can be seen as in \cite[Proof of Theorem 4.1]{EST}).
To deduce \fullref{theorem:matchalgebras}, we note that
the identification of the bimodules as graded, free $\ring$-modules is clear 
by \fullref{lemma:matchvs}, while the actions 
agree by \fullref{theorem:matchalgebrasold} 
and the construction of the actions. 
Everything in these arguments is independent of the parameters 
and thus, the theorem follows.
\end{proof}
\subsection{Arc algebras: isomorphisms}\label{subsec:iso-alg}
In this section we show that the signed $2$-parameter arc 
algebra $\Arcalg$ and the 
(scalar extended) $\KBN$ 
specialization
\begin{gather}\label{eq:some-label}
\ArcalgS[\KBN]= 
\ArcalgS_{\Z[\parto]}[\KBN] 
\otimes_{\Z[\parto]}\Z[\parto,\paro^{\pm 1}]
\end{gather} 
are isomorphic as graded 
algebras. Here, as usual, 
$\parto$ is of degree $4$. 
We now like to show \fullref{theorem:crazy-isomorphism2}, which implies \fullref{theorem:matchalgebrasnew}.

Both,  
$\ArcalgS[\KBN]_\Lambda$ and $\Arcalg_\Lambda$, are 
isomorphic as 
graded, free $\ring$-modules to 
$\ring\CUPBasis{\Lambda}$, 
with $\CUPBasis{\Lambda}$ being the cup foam basis. 
By definition, the multiplication differs only by the appearing coefficients 
in the result. Hence, we will give the isomorphism from 
$\ArcalgS[\KBN]_\Lambda$ to $\Arcalg_\Lambda$
by defining 
a coefficient for each of the diagrams appearing in the 
multiplication and show that the maps intertwine the two 
multiplication rules.

\begin{definition}\label{definition:stacked1}
We call any diagram appearing in an 
intermediate step in the multiplication procedure 
of the arc algebra a 
\textit{(stacked) diagram}. We denote such diagrams 
throughout this section by $\stacked$ (possibly with decorations and indices), and 
choices of orientations of it by $\stacked^{\mathrm{or}}$ (possibly with decorations and indices).
\end{definition}

\begin{definition}\label{definition:stacked}
We define sets of arcs inside 
a circle $C$ in a fixed diagram stacked $\stacked$:
\begin{enumerate}[label=(\roman*)]

\item $\icup(C)$ denotes the set of all cups in $C$ such 
that the exterior of $C$ is above the cup.

\item $\icap(C)$ denotes the set of all caps in $C$ such 
that the exterior of $C$ is below the cap.

\item $\ecup(C)$ denotes the set of all cups in $C$ such 
that the interior of $C$ is above the cup.

\item $\ecap(C)$ denotes the set of all caps in $C$ such 
that the interior of $C$ is below the cap.

\end{enumerate}
The exterior and 
interior is meant here with respect 
to the circle $C$ only (ignoring 
all other components of $\stacked$). 
\end{definition}

\begin{example}\label{example:ext-int}
The outer circle $C_{\mathrm{out}}$ in the third diagram in \eqref{eq:mult2} 
has $\icup(C_{\mathrm{out}})=1$, $\icap(C_{\mathrm{out}})=0$, $\ecup(C_{\mathrm{out}})=1$ 
and $\ecap(C_{\mathrm{out}})=2$. The inner circle $C_{\mathrm{in}}$ 
in the same stacked diagram 
has exactly the same numbers. The circle $C$ 
in the rightmost diagram in \eqref{eq:mult2} has 
$\icup(C)=1$, $\icap(C)=1$, $\ecup(C)=2$ 
and $\ecap(C)=2$. Moreover,
\begin{gather}\label{eq:some-label-2}
\left(\icup(C_{\mathrm{out}})\,{\scriptstyle\cup}\,\icap(C_{\mathrm{out}})
\,{\scriptstyle\cup}\,\ecup(C_{\mathrm{in}})
\,{\scriptstyle\cup}\,\ecap(C_{\mathrm{in}})\right)\setminus\text{surg}
=\icup(C)\,{\scriptstyle\cup}\,\icap(C).
\end{gather}
Here ``surg'' means the set containing the cup-cap 
involved in the surgery.
\end{example}

We denote by $\mathbb{B}(\stacked)$ 
the set of all possible orientations of 
a given diagram $\stacked$, and by $\op{t}(D)$ the rightmost point of $D$.

\begin{definition}\label{definition:the-coefficient-map}
For a fixed $\stacked$, we define its $\ring$-linear 
\textit{coefficient map} via:
\begin{gather}\label{eq:some-label-3}
\begin{aligned}
\coeff_{\stacked} \colon \ring\mathbb{B}(\stacked) &\longrightarrow
\ring\mathbb{B}(\stacked), 
\\
\stacked^{\mathrm{or}} & \longmapsto \left( 
{\textstyle\prod_{\text{circles}}}\;
\coeff_{\parsign}(C,\stacked^{\mathrm{or}})\cdot\coeff_{\paro}(C,\stacked^{\mathrm{or}})\right)\stacked^{\mathrm{or}},
\end{aligned}
\end{gather}
and we let $\coeff(C,D)=
\coeff_{\parsign}(C,\stacked^{\mathrm{or}})\cdot\coeff_{\paro}(C,\stacked^{\mathrm{or}})$.
Here the product runs over all circles in $\stacked$, 
and the involved terms (i.e. for each such circle $C$) are defined as follows.

\noindent $\succ$ If $C$ is 
oriented anticlockwise, then set
\begin{gather}\label{eq:some-label-4}
\begin{aligned}
\coeff_{\parsign}(C,\stacked^{\mathrm{or}}) \;=\;& 
{\textstyle\prod_{\gamma \in \icups(C)}}\;
\parsign^{(s_{\Lambda}(\gamma)+1)\pos_{\Lambda}(\gamma)} 
\cdot
{\textstyle\prod_{\gamma \in \icaps(C)}} \;
\parsign^{s_{\Lambda}(\gamma)(\pos_{\Lambda}(\gamma)+1)},
\\
\coeff_{\paro}(C,\stacked^{\mathrm{or}}) \;=\;&
{\textstyle\prod_{\gamma \in \icups(C)}}\;
\paro^{-s_{\Lambda}(\gamma)}
\cdot
{\textstyle\prod_{\gamma \in \icaps(C)}}\;
\paro^{s_{\Lambda}(\gamma)-1},
\end{aligned}
\end{gather}
where, as usual, 
$\pos_{\Lambda}(\gamma)$ denotes the position 
of the leftmost points of the $\gamma$'s
and $s_{\Lambda}(\gamma)$ is 
the saddle width.

\noindent $\succ$ If $C$ is 
oriented clockwise, then we use the 
same coefficient and additionally
multiply by $\parsign^{\op{t}(C)}$. (The 
reader might think of $\parsign^{\op{t}(C)}$ as keeping 
track of ``dot moving''.)
\end{definition}

Since $\parsign=\pm 1$, its powers
matter only mod $2$.

\begin{example}\label{example:needed!}
The circle $C_{\mathrm{out}}$
in the third diagram $\stacked_3$ in \eqref{eq:mult2} has only one 
cup $\gamma\in\icups(C_{\mathrm{out}})$, and $s_{\Lambda}(\gamma)=1$ 
and $\pos_{\Lambda}(\gamma)=3$. Thus, if 
$\stacked^{\rm or}_3$ denotes the orientation from \eqref{eq:mult2}, then
$\coeff(C_{\mathrm{out}},\stacked^{\rm or}_3)=\parsign^6\paro^{-1}=\paro^{-1}$. Similarly 
one obtains 
$\coeff(C_{\mathrm{in}},\stacked^{\rm or}_3)=\parsign^4\paro^{-1}=\paro^{-1}$. 
Moreover, the circle $C$ 
in the rightmost diagram $\stacked_4$ in \eqref{eq:mult2} has 
one cup $\gamma$ and one cap $\gamma^{\prime}$ ``pushing inwards'',
and $s_{\Lambda}(\gamma)=2$, $\pos_{\Lambda}(\gamma)=1$, $s_{\Lambda}(\gamma^{\prime})=1$ 
and $\pos_{\Lambda}(\gamma^{\prime})=1$. 
Thus, 
$\coeff(C,\stacked^{\rm or}_4)=
\parsign^3\paro^{-2}\cdot\parsign^2=\parsign\paro^{-2}$.
\end{example}

We will usually write 
$\coeff(C^{\rm anti})=\coeff(C,\stacked^{\mathrm{or}})$ 
etc.
to denote the coefficient for the circle $C$ when 
the orientation is chosen such that $C$ is oriented 
anticlockwise, and similarly 
$\coeff(C^{\rm cl})=\coeff(C,\stacked^{\mathrm{or}})$ 
when it is chosen such that $C$ is 
oriented clockwise. For example, we have by definition
\begin{gather}\label{eq:from-anti-to-clock}
\coeff(C^{\rm cl}) = \coeff(C^{\rm anti}) \cdot 
\parsign^{\op{t}(C)}.
\end{gather}

\fullref{definition:the-coefficient-map} 
restricts to a homogeneous, $\ring$-linear map 
\begin{gather}\label{eq:some-label-5}
\coeff_{\lambda,\mu} \colon \ring\CUPbasis{\lambda}{\mu}{\Lambda} 
\to\ring\CUPbasis{\lambda}{\mu}{\Lambda},
\end{gather}
for $\lambda,\mu\in\Lambda$. By summing all of these up we obtain a 
homogeneous, $\ring$-linear map
\begin{gather}\label{eq:the-crazy-map}
\coeff_\Lambda \colon  
\ArcalgS[\KBN]_{\Lambda}
\to \Arcalg_{\Lambda}
\end{gather}
since 
$\ArcalgS[\KBN]_{\Lambda}\cong
\ring\CUPBasis{\Lambda}
\cong\Arcalg_{\Lambda}$, 
as graded, free $\ring$-modules, by definition of the arc algebras.

In fact, the $\ring$-linear map from \eqref{eq:the-crazy-map} 
is actually an isomorphism of graded algebras: 

\begin{proposition}\label{proposition:crazy-isomorphism}
The maps $\coeff_\Lambda$
from \eqref{eq:the-crazy-map} are isomorphisms 
of graded $\ring$-algebras for all $\Lambda\in\bblock$. These can be extended to an 
isomorphism of graded $\ring$-algebras
\begin{gather}\label{eq:some-label-7}
\coeff \colon  
\ArcalgS[\KBN]
\overset{\cong}{\longrightarrow} \Arcalg.
\end{gather}
\end{proposition}

The non-trivial proof of 
\fullref{proposition:crazy-isomorphism}, explicitly matches the multiplication rules and is
given in \fullref{subsec:proof-two}.

Given the setup as in the beginning 
of this section, we define the map 
$\Isonew{}{}$ from \eqref{eq:iso-quite-new} 
as follows. Let 
$\coeff^{\parto,\specS(\parsign),\specS(\paro)}\colon\ArcalgS[\KBN]
\to\ArcalgS[\parto,\specS(\parsign),\specS(\paro)]$ 
be the homogeneous, $\ring$-linear map obtained in the same way 
as $\coeff\colon\ArcalgS[\KBN]
\to\Arcalg$, but using the specialized parameters $\specS(\parsign)$ 
and $\specS(\paro)$. 
Then, by \fullref{proposition:crazy-isomorphism}, 
set
\begin{gather}\label{eq:to-avoid-as-before}
\Isonew{}{}\colon\Arcalg\to\ArcalgS[\parto,\specS(\parsign),\specS(\paro)],\;
\Isonew{}{}=\coeff^{\parto,\specS(\parsign),\specS(\paro)}\circ(\coeff)^{-1}.
\end{gather}

We are now ready to prove \fullref{theorem:crazy-isomorphism2}, 
assuming \fullref{proposition:crazy-isomorphism}.

\begin{proof}[Proof of \fullref{theorem:crazy-isomorphism2}]
The proof of \fullref{proposition:crazy-isomorphism} 
only uses that $\parsign=\pm 1$ and that $\paro$ is invertible. 
Thus, the same arguments work for any 
$\specS(\parsign)$ and $\specS(\paro)$ 
providing a homogeneous isomorphism $\coeff^{\parto,\specS(\parsign),\specS(\paro)}$
between $\ArcalgS[\KBN]$ and $\ArcalgS[\parto,\specS(\parsign),\specS(\paro)]$.
\end{proof}
\subsection{Arc bimodules: bimodule homomorphisms}\label{subsec:iso-bimod}
In \fullref{proposition:crazy-isomorphism} we have identified $\ArcalgS[\KBN]$ 
with $\Arcalg$ 
using the coefficient map. Thus, there is also 
an identification of their bimodules. The aim 
of this section is to make this explicit.
For the identification of the 
bimodules $\Mo[\KBN](\compmatch,\compmatcht)$ and 
$\Mo[\parameterS](\compmatch,\compmatcht)$ for a 
fixed admissible matching 
$(\compmatch,\compmatcht)$ we need to introduce some 
additional notations and slightly modify the 
coefficient map. But 
otherwise the identification works as for the algebras.

\begin{definition}\label{definition:stacked2}
As in \fullref{definition:stacked1}, we call any diagram 
appearing in the intermediate 
step of the multiplication procedure a 
\textit{stacked diagram}.
Furthermore, fixing a circle $C$ in a stacked diagram,
we define subsets of arcs
containing the arcs in the basic 
moves of 
the second type for $\alpha_i$ 
and $-\alpha_i$ from \cite[(31)]{EST}, i.e. local moves from 
\makebox[1em]{\raisebox{0.03cm}{$\dummy$}} \makebox[1em]{$\times$} to 
\makebox[1em]{$\times$} \makebox[1em]{\raisebox{0.03cm}{$\dummy$}}, 
or from \makebox[1em]{$\times$} \makebox[1em]{\raisebox{0.03cm}{$\dummy$}} to 
\makebox[1em]{\raisebox{0.03cm}{$\dummy$}} \makebox[1em]{$\times$}. 
We divide these depending on the exterior or interior of 
$C$:

\begin{enumerate}[label=(\roman*)]

\item $\eright(C)$ denotes the set of arcs in a local move from \makebox[1em]{$\times$} \makebox[1em]{\raisebox{0.03cm}{$\dummy$}} to 
\makebox[1em]{\raisebox{0.03cm}{$\dummy$}} \makebox[1em]{$\times$}, where 
the exterior of $C$ is to the lower left of the arc.

\item $\eleft(C)$ denotes the set of arcs in a local move
from \makebox[1em]{\raisebox{0.03cm}{$\dummy$}} \makebox[1em]{$\times$} to 
\makebox[1em]{$\times$} \makebox[1em]{\raisebox{0.03cm}{$\dummy$}}, where the 
exterior of $C$ is to the lower right of the arc.

\item $\iright(C)$ denotes the set of arcs in a local move 
from \makebox[1em]{$\times$} \makebox[1em]{\raisebox{0.03cm}{$\dummy$}} to 
\makebox[1em]{\raisebox{0.03cm}{$\dummy$}} \makebox[1em]{$\times$}, where the interior 
of $C$ is to the lower left of the arc.

\item $\ileft(C)$ denotes the set of arcs in a local move from 
\makebox[1em]{\raisebox{0.03cm}{$\dummy$}} \makebox[1em]{$\times$} to 
\makebox[1em]{$\times$} \makebox[1em]{\raisebox{0.03cm}{$\dummy$}}, where the interior of 
$C$ is to the lower right of the arc.
\end{enumerate}

Again, the 
exterior and 
interior is meant here with respect 
to the circle $C$ only. 
\end{definition}

\begin{definition}\label{definition:the-coefficient-map2}
As in \fullref{definition:the-coefficient-map} we define a \textit{coefficient map} $\coeff_{\stacked}$ and $\coeff(C,\stacked)$ with the following differences:

\noindent $\succ$ If $C$ is 
oriented anticlockwise
when looking at the orientation 
$\stacked^{\mathrm{or}}$, then set
\begin{gather}\label{eq:the-crazy-map-def}
\begin{aligned}
\coeff_{\parsign}(C,\stacked^{\mathrm{or}}) =\;& 
{\textstyle\prod_{\gamma \in \icups(C)}}\;
\parsign^{(s_{\Lambda}(\gamma)+1)\pos_{\Lambda}(\gamma)} 
\cdot 
{\textstyle\prod_{\gamma \in \icaps(C)}}\;
\parsign^{s_{\Lambda}(\gamma)(\pos_{\Lambda}(\gamma)+1)}\\
\;\;&\cdot 
{\textstyle\prod_{\gamma \in\,\erights(C)}}\; 
\parsign^{\pos_{\Lambda}(\gamma)}
\cdot
{\textstyle\prod_{\gamma \in\elefts(C)}}\;
\parsign^{\pos_{\Lambda}(\gamma)+1},
\\
\coeff_{\paro}(C,\stacked^{\mathrm{or}}) =\;&
{\textstyle\prod_{\gamma \in \icups(C)}}\;
\paro^{-s_{\Lambda}(\gamma)} 
{\textstyle\prod_{\gamma \in \icaps(C)}}\;
\paro^{s_{\Lambda}(\gamma)-1} 
\cdot
\paro^{\#\left( \eright(C)\,\cup\,\eleft(C)\right)},\\
\end{aligned}
\end{gather}
where we use the same notations as in 
\fullref{definition:the-coefficient-map}.

\noindent $\succ$ If $C$ is 
oriented clockwise
when looking at the orientation 
$\stacked^{\mathrm{or}}$, then we use the 
same coefficient and additionally
multiply by $\parsign^{\op{t}(C)}$.
\end{definition}

Similar to \eqref{eq:the-crazy-map}, we use these maps to define a 
homogeneous, $\ring$-linear map
$\coeff_{\compmatch,\compmatcht}$ which has the following properties.

\begin{proposition}\label{proposition:crazy-module-maps}
The map 
\begin{gather}\label{eq:the-crazy-map-modules}
\coeff_{\compmatch,\compmatcht} \colon  
\Mo[\KBN](\compmatch,\compmatcht)
\to \Mo[\parameterS](\compmatch,\compmatcht)
\end{gather}
is an isomorphism of graded, free $\ring$-modules that
intertwines the bimodule actions of $\ArcalgS[\KBN]$ and $\Arcalg$, i.e. for any 
$x \in \ArcalgS[\KBN]$ and any
$m \in \Mo[\KBN](\compmatch,\compmatcht)$ 
it holds that
$\coeff_{\compmatch,\compmatcht}(x\cdot m)
=\coeff(x)\cdot\coeff_{\compmatch,\compmatcht}(m)$ and $\coeff_{\compmatch,\compmatcht}(m\cdot x)
=\coeff_{\compmatch,\compmatcht}(m)\cdot\coeff(x)$.
\end{proposition}

The proof of this proposition appears in \fullref{subsec:proof-three}.
As before in \eqref{eq:to-avoid-as-before}, 
we use \fullref{proposition:crazy-module-maps} 
to define
$\coeff^{\parto,\specS(\parsign),\specS(\paro)}_{\compmatch,\compmatcht} \colon  
\Mo[\parameterS](\compmatch,\compmatcht)
\to \Mo[\parto,\specS(\parsign),\specS(\paro)](\compmatch,\compmatcht)$ 
to be the homogeneous, $\ring$-linear map obtained in the same way 
as $\coeff_{\compmatch,\compmatcht}$ from \eqref{eq:the-crazy-map-modules}, 
but using the specialized parameters $\specS(\parsign)$ 
and $\specS(\paro)$ instead of $\parsign$ and $\paro$. 
Then set
\begin{gather}\label{eq:coeff-in-action}
\coeff_{\Isonew{}{}}=\coeff^{\parto,\specS(\parsign),\specS(\paro)}_{\compmatch,\compmatcht}
\circ\!(\coeff_{\compmatch,\compmatcht})^{-1}
\colon\Mo[\parameterS](\compmatch,\compmatcht)\to\Mo[\parto,\specS(\parsign),\specS(\paro)](\compmatch,\compmatcht).
\end{gather}

As in the proof \fullref{theorem:crazy-isomorphism2}, 
but using \fullref{proposition:crazy-module-maps} we get:

\begin{corollary}\label{corollary:crazy-module-maps}
The map $\coeff_{\Isonew{}{}}$
is an isomorphism of graded, free $\ring$-modules that
intertwines the actions of $\Arcalg$ and 
$\ArcalgS[\parto,\specS(\parsign),\specS(\paro)]$.
\end{corollary}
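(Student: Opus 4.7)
The plan is to deduce this corollary formally from \fullref{proposition:crazy-module-maps}, mirroring the way \fullref{theorem:crazy-isomorphism2} is derived from \fullref{proposition:crazy-isomorphism}. The essential observation is that the proof of \fullref{proposition:crazy-module-maps} nowhere uses a specific value of $\parsign$ or $\paro$ beyond $\parsign=\pm 1$ and $\paro$ being invertible. Thus, running the same argument with $\parsign,\paro$ replaced by $\specS(\parsign),\specS(\paro)$ yields that $\coeff^{\parto,\specS(\parsign),\specS(\paro)}_{\compmatch,\compmatcht}$ is itself a graded $\ring$-module isomorphism between $\Mo[\KBN](\compmatch,\compmatcht)$ and $\Mo[\parto,\specS(\parsign),\specS(\paro)](\compmatch,\compmatcht)$ intertwining the $\ArcalgS[\KBN]$-action with the $\ArcalgS[\parto,\specS(\parsign),\specS(\paro)]$-action along the algebra isomorphism $\coeff^{\parto,\specS(\parsign),\specS(\paro)}$.

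First, I would verify that $\coeff_{\Isonew{}{}}$ is an isomorphism of graded, free $\ring$-modules: by \fullref{proposition:crazy-module-maps} the map $\coeff_{\compmatch,\compmatcht}$ is a degree-preserving bijection (it is diagonal in the basis $\CUPBasis{\compmatch,\compmatcht}$ with invertible scalar entries), hence invertible, and composing with the specialized version above gives the claim.

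Next, I would check the intertwining property. Fix $a\in\Arcalg$ and $m\in\Mo[\parameterS](\compmatch,\compmatcht)$, and set $\tilde a=\coeff^{-1}(a)\in\ArcalgS[\KBN]$ and $\tilde m=(\coeff_{\compmatch,\compmatcht})^{-1}(m)$. By \fullref{proposition:crazy-module-maps} applied to $\parameterS$, we have $a\cdot m=\coeff(\tilde a)\cdot\coeff_{\compmatch,\compmatcht}(\tilde m)=\coeff_{\compmatch,\compmatcht}(\tilde a\cdot\tilde m)$. Applying $\coeff^{\parto,\specS(\parsign),\specS(\paro)}_{\compmatch,\compmatcht}$ and using \fullref{proposition:crazy-module-maps} in the specialized setting gives
\[
\coeff_{\Isonew{}{}}(a\cdot m)=\coeff^{\parto,\specS(\parsign),\specS(\paro)}(\tilde a)\cdot\coeff^{\parto,\specS(\parsign),\specS(\paro)}_{\compmatch,\compmatcht}(\tilde m)=\Isonew{}{}(a)\cdot\coeff_{\Isonew{}{}}(m),
\]
which is the desired compatibility along $\Isonew{}{}=\coeff^{\parto,\specS(\parsign),\specS(\paro)}\circ\coeff^{-1}$. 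The analogous calculation settles the right action.

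There is no genuine obstacle here: all of the combinatorial content sits inside \fullref{proposition:crazy-module-maps}, and the corollary is a purely formal ``apply the proposition twice and compose'' argument. The only thing to keep straight is bookkeeping, namely that the algebra action is intertwined through the map $\Isonew{}{}$ rather than some auxiliary map, which is built into the very definition \eqref{eq:coeff-in-action} of $\coeff_{\Isonew{}{}}$.
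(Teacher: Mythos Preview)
Your proposal is correct and follows essentially the same approach as the paper, which simply says ``As in the proof of \fullref{theorem:crazy-isomorphism2}, but using \fullref{proposition:crazy-module-maps}.'' You have merely unpacked what that one-line proof means: the argument for \fullref{proposition:crazy-module-maps} uses only $\parsign=\pm 1$ and invertibility of $\paro$, so it applies verbatim to the specialized parameters, and composing the two resulting intertwiners through $\ArcalgS[\KBN]$ gives the claim.
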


\begin{proof}[Proof of \fullref{theorem:matchalgebrasnew}]
This follows from \fullref{theorem:crazy-isomorphism2} 
and \fullref{corollary:crazy-module-maps}.
\end{proof}
\subsection{Arc bimodules: co-structure}\label{subsec:comult}
The aim of this section is to describe a \textit{co-structure} 
topologically on web bimodules $\M[\parameterS](v)$ 
and algebraically on arc bimodules $\Mo[\parameterS](\compmatch,\compmatcht)$. 
Then we match theses structures---which again comes with sophisticated scalars---for 
different specializations of $\parameterS$
using an isomorphism similar, but not equal, to the coefficient map 
from \eqref{eq:the-crazy-map-modules}.
We start on the side of $\webalg[\parameterS]$. 
We again
only consider balanced $\vec{k},\vec{l}\in\bY$.

\begin{remark}\label{remark:again-more-general}
The construction of the co-structure 
clearly works for $\webalg[\parameter]$ and $\webalg^\pm[\parameter]$ as well.
\end{remark}

\begin{definition}\label{definition:reverse-mult}
Let $v\in\Hom_{\F[\parameterS]}(\vec{k},\vec{l})$. Recalling that 
we consider in 
$\F[\parameterS]$ webs without relations, we can pick any pair of neighboring 
vertical usual edges, ignoring possible phantom edges,  
and perform a \textit{reverse surgery} on $\M[\parameterS](v)$:
\begin{equation}\label{eq:next-label}
\xy
\xymatrix{
\raisebox{0.08cm}{\reflectbox{
	\xy
	(0,0)*{\includegraphics[scale=.65]{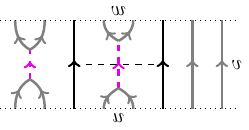}};
	\endxy
}}\ar@{|->}[rr]^{\text{saddle foam}}
&&
\raisebox{0.08cm}{\reflectbox{
	\xy
	(0,0)*{\includegraphics[scale=.65]{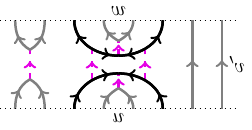}};
	\endxy
}}
}
\endxy
\end{equation}
ending up with a new web 
$v^{\prime}\in\Hom_{\F[\parameterS]}(\vec{k},\vec{l})$, and involving the saddle foam
\begin{gather}\label{eq:next-label-2}
\xy
(0,0)*{\includegraphics[scale=.65]{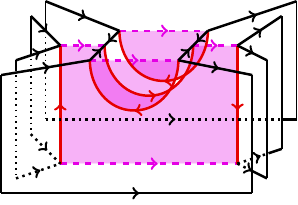}};
\endxy
\end{gather} 
Here where $u,w$ are (not-illustrated) webs glued to the bottom respectively top.
This should be read as follows: start with 
$f\in\twoHom_{\F[\parameter]}(\oneinsert{2\omega_{\ell}},uv^{\ast}vw)$ 
and stack on top of it a foam which is the 
identity at the bottom ($u$ part)
and top ($w$ part) of the web, and the saddle 
in between. Repeat this for all $u\in\CUP(\vec{k}),w\in\CUP(\vec{l})$.
\end{definition}

Note that we made a certain choice where to perform 
the reverse surgery, which is however uniquely determined by $v^{\prime}$. Thus, we 
can write $\boldsymbol{\mathrm{rMult}}_v^{v^{\prime}}$ etc. 
without ambiguity.

The following is clear.

\begin{lemma}\label{lemma:intertwiner}
The procedure from \fullref{definition:reverse-mult} 
defines a $\webalg[\parameterS]$-bimodule 
homomorphism 
$\boldsymbol{\mathrm{rMult}}_v^{v^{\prime}}
\colon\M[\parameterS](v)\to\M[\parameterS](v^{\prime})$.\qedhere
\end{lemma}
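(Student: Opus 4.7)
The plan is to prove this via the standard ``far apart'' argument, exactly parallel to the one used in \fullref{lemma:webbimodules} to establish the commuting bimodule structure on $\M[\parameterS](v)$ itself. I would first unpack what needs to be shown: an element of $\M[\parameterS](v) = \bigoplus_{u,w} \twoHom_{\F[\parameterS]}(\oneinsert{2\omega_\ell}, uvw^{\ast})$ is a foam $f$ whose top boundary is $uvw^{\ast}$, and by \fullref{definition:reverse-mult} the map $\boldsymbol{\mathrm{rMult}}_v^{v'}$ stacks on top of $f$ a ``global saddle foam'' $S_v^{v'}$ which is the identity on the $u$-part and on the $w^{\ast}$-part, and is the local saddle of \eqref{eq:ssaddle} on a chosen neighboring pair of ordinary edges in the $v$-part. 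Thus $\boldsymbol{\mathrm{rMult}}_v^{v'}(f) = (\mathrm{id}_u \otimes \mathrm{sad}_v^{v'} \otimes \mathrm{id}_{w^{\ast}}) \circ f$.

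Next, I would recall that for $a \in \webalg[\parameterS]_{\vec{k}}$ and $b \in \webalg[\parameterS]_{\vec{l}}$, the left action of $a$ on $\M[\parameterS](v)$ is realized (after collapsing) by composing $f$ with a multiplication foam $M_a$ supported in the $u$-region at the bottom of the web $uvw^{\ast}$, while the right action of $b$ is given by a multiplication foam $M_b$ supported in the $w^{\ast}$-region at the top. Consequently $a \cdot f \cdot b = (M_a \otimes \mathrm{id}_v \otimes M_b) \circ f$ where the three horizontal tensor factors are supported in pairwise disjoint horizontal strips (after an appropriate stretching of the ambient $\R \times [-1,1] \times [-1,1]$).

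The bimodule identity $\boldsymbol{\mathrm{rMult}}_v^{v'}(a \cdot f \cdot b) = a \cdot \boldsymbol{\mathrm{rMult}}_v^{v'}(f) \cdot b$ then follows because the saddle $\mathrm{sad}_v^{v'}$ lives in the $v$-region, while $M_a$ and $M_b$ live in the $u$- and $w^{\ast}$-regions; the three pieces can therefore be arranged into a single prefoam in $\R \times [-1,1] \times [-1,1]$, and any two stacking orders are related by a boundary-preserving isotopy through prefoams whose generic horizontal slices remain webs. By \fullref{remark:isotopies} and the fact that prefoams are considered modulo such isotopies in $\F[\parameterS]$, both compositions define the same element of $\twoHom_{\F[\parameterS]}(\oneinsert{2\omega_\ell}, uv'w^{\ast})$. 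Taking the direct sum over all $u \in \CUP(\vec{k})$ and $w \in \CUP(\vec{l})$, this gives a well-defined $\ring$-linear map commuting with both actions.

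The main potential obstacle is the fact that the local saddle in \eqref{eq:ssaddle} involves a particular choice of cup-cap pair in $v$, and one could worry that specifying this choice introduces scalar ambiguities or interacts with the scalars appearing in multiplication foams; but since the saddle itself carries no scalar beyond those built into the definition, and fixing $v'$ pins down the choice uniquely, there is no ambiguity once $v'$ is specified. The argument is therefore essentially topological and parameter-independent, and works verbatim for any specialization of $\parameterS$.
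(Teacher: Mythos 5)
Your proof is correct and matches the paper's intent: the paper disposes of this lemma with ``Clear by construction,'' and your write-up is exactly the disjoint-support (``far apart'') isotopy argument that the paper invokes explicitly in the analogous situations of \fullref{lemma:webbimodules} and \fullref{proposition:cats-are-equal}. No gap; you have simply made explicit what the paper leaves implicit.
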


We define the same on the side of $\Arcalg$. As usual, 
all blocks are balanced.

\begin{definition}\label{definition:reverse-mult-arcalg}
Let $\compmatcht$ be a $\compmatch$-composite 
matching. 
Recalling that 
we construct these 
using the basic moves from \cite[(31)]{EST}, we can pick any pair of neighboring 
vertical arcs (ignoring possible cups and caps, and symbols 
\makebox[1em]{$\circ$} or \makebox[1em]{$\dummy$} in between) 
and perform a \textit{reverse surgery} on $\Mo[\parameterS](\compmatch,\compmatcht)$ 
giving us a new composite matching $\compmatcht^{\prime}$ 
for $\compmatch^{\prime}$:
\begin{gather}\label{eq:next-label-3}
\xy
\xymatrix{
\raisebox{0.075cm}{\reflectbox{
	\xy
	(0,0)*{\includegraphics[scale=.65]{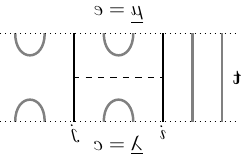}};
	\endxy
}}
\ar@{|->}[r]
&
\raisebox{0.075cm}{\reflectbox{
	\xy
	(0,0)*{\includegraphics[scale=.65]{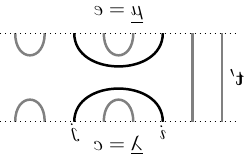}};
	\endxy
}}
}
\endxy
\end{gather}
Using the reversed surgery, we define a $\ring$-linear 
map $\boldsymbol{\mathrm{rmult}}_{\compmatcht}
^{\compmatcht^{\prime}}
\colon
\Mo[\parameterS](\compmatch,\compmatcht)
\to\Mo[\parameterS](\compmatch^{\prime},\compmatcht^{\prime})$ 
on basis elements of $\Mo[\parameterS](\compmatch,\compmatcht)$, which is in some sense
dual to the multiplication of the arc algebra,
by applying the following rules to the components involved in the reversed surgery:

\noindent \textbf{Non-nested Merge.} The non-nested 
circles $C_i$ and $C_j$ are merged into $C$. We use the same scalars as for the multiplication.

\noindent \textbf{Nested Merge.} The nested 
circles $C_i$ and $C_j$ are merged into $C$. 
We use the scalars as 
for the multiplication, but additionally multiply with $\parsign$.

\noindent \textbf{Non-nested Split.} The circle $C$ 
splits into the non-nested circles $C_{\mathrm{bot}}$ and $C_{\mathrm{top}}$ 
(being at the bottom or top of the picture). 
We use almost the same scalars as for 
multiplication, but in case $C$ is oriented anticlockwise, we use 
(for bottom, respectively top, circle oriented clockwise)
\begin{gather}\label{eq:again-something-1}
\paro \cdot \parsign^{\length_\Lambda(\gamma_{\mathrm{bot}}^{\rm ndot})}
\cdot\parsign^{s_\Lambda(\gamma)}
\quad\text{respectively}\quad
\parsign \cdot \paro \cdot \parsign^{\length_\Lambda(\gamma_{\mathrm{top}}^{\rm ndot})}
\cdot\parsign^{s_\Lambda(\gamma)}.
\end{gather}
Here 
$\gamma_{\mathrm{bot}}^{\rm ndot}$ respectively $\gamma_{\mathrm{top}}^{\rm ndot}$ 
are to be understood similar to \eqref{eq:new-dot1} and \eqref{eq:new-dot2}. 
In case $C$ is oriented clockwise, we use
\begin{gather}\label{eq:again-something-2}
\paro \cdot \parsign^{\length_\Lambda(\gamma_{\mathrm{top}}^{\rm dot})}\cdot\parsign^{\length_\Lambda(\gamma_{\mathrm{bot}}^{\rm ndot})}\cdot\parsign^{s_\Lambda(\gamma)}
\;\text{respectively}\;
\parto \cdot \parsign \cdot \paro \cdot \parsign^{\length_\Lambda(\gamma_{\mathrm{top}}^{\rm dot})}\cdot\parsign^{\length_\Lambda(\gamma_{\mathrm{top}}^{\rm ndot})}\cdot\parsign^{s_\Lambda(\gamma)}
\end{gather}
for both circles oriented clockwise respectively anticlockwise.

\noindent \textbf{Nested Split.} The circle $C$ 
splits into the non-nested circles $C_{\mathrm{in}}$ and $C_{\mathrm{out}}$. 
We use the same scalars as for the multiplication.
\end{definition}

\begin{example}\label{ex:yup-needed-yet-again}
An illustration of the reverse multiplication 
is
\begin{gather}\label{eq:again-something-3}
\xy
\xymatrix@C-10pt{
\raisebox{0cm}{
\xy
(0,0)*{\includegraphics[scale=.65]{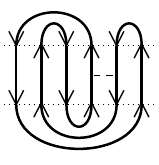}};
\endxy
}
\ar@{|->}[r]
&
\paro\parsign^1
\raisebox{0cm}{
\xy
(0,0)*{\includegraphics[scale=.65]{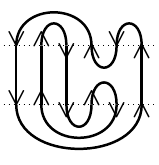}};
\endxy
}
\;+\;\parsign\paro\parsign^1
\raisebox{0cm}{
\xy
(0,0)*{\includegraphics[scale=.65]{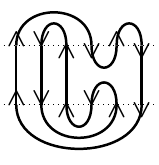}};
\endxy
}
}
\endxy
\end{gather}
\end{example}

\begin{remark}\label{remark:costructure}
We note that the web algebra is in fact a 
symmetric Frobenius algebra, by abstract reasons. 
(This can be seen by copying \cite[Proposition 30]{Khov} or \cite[Theorem 3.9]{MPT}.) 
The same holds for the arc algebra. 
(This can be seen by copying \cite[Theorem 6.3]{BS1}.)
The existence of a comultiplication for the web and the arc algebra can thus be deduced because they are Frobenius algebras. For the specialization $(0,1,1)$ of the arc algebra or the web algebra in general the above comultiplication are the ones coming from the Frobenius structure, but we have not verified this for the general case for the arc algebra.
\end{remark}

\begin{lemma}\label{lemma:intertwiner-2}
The procedure from \fullref{definition:reverse-mult-arcalg} 
defines an $\Arcalg$-bimodule homomorphism 
$\boldsymbol{\mathrm{rmult}}\colon
\Mo[\parameterS](\compmatch,\compmatcht)
\to\Mo[\parameterS](\compmatch^{\prime},\compmatcht^{\prime})$.
\end{lemma}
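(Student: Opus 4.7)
The plan is to transport the statement to the web algebra side via the isomorphisms already established, where it becomes essentially topological, and then to verify that the algebraically prescribed scalars in \fullref{definition:reverse-mult-arcalg} are precisely those produced by the topological reverse surgery when expanded in the cup foam basis. More precisely, by \fullref{lemma:matchvs} we have identifications $\Iso{v}{(\compmatch,\compmatcht)}\colon\M[\parameterS]^{\based}(\web(\compmatch,\compmatcht))\stackrel{\cong}{\to}\Mo[\parameterS](\compmatch,\compmatcht)$ of graded, free $\ring$-modules matching cup foam bases with oriented circle diagrams. Setting $v=\web(\compmatch,\compmatcht)$ and $v'=\web(\compmatch',\compmatcht')$, we would like to show that the algebraic map $\boldsymbol{\mathrm{rmult}}$ from \fullref{definition:reverse-mult-arcalg} fits into a commutative square with the topological map $\boldsymbol{\mathrm{rMult}}_{v}^{v'}$ from \fullref{definition:reverse-mult} under these identifications. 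Once such a commutative square is established, $\boldsymbol{\mathrm{rmult}}$ inherits the $\Arcalg$-bimodule structure from $\boldsymbol{\mathrm{rMult}}_v^{v'}$, which is a $\webalg[\parameterS]$-bimodule map by \fullref{lemma:intertwiner}. Using \fullref{theorem:matchalgebrasold} to match $\webalg[\parameterS]^{\based}$-actions with $\Arcalg$-actions, the claim then follows.

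Concretely, I would proceed as follows. First I would recall that a reverse surgery on the web side is realized by stacking the reverse saddle foam locally on $v$, which yields a new web $v'$; this operation is manifestly a morphism of $\webalg[\parameterS]$-bimodules since the left/right actions happen at the bottom/top of the foam column and commute with the internally placed saddle. Second, starting from a cup foam basis element in $\M[\parameterS]^{\based}(v)$ and composing with the reverse saddle, I would expand the result back into the cup foam basis of $\M[\parameterS]^{\based}(v')$, using the local relations \eqref{eq:theusualrelations1}--\eqref{eq:phantom-squeezing}, and especially the singular neck cutting \eqref{eq:neckcut}, the closed seam removal \eqref{eq:closedseam} and the dot migration \eqref{eq:dotmigration}. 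The topological effect of the reverse saddle on a given cup foam basis vector splits into the same four case analysis (non-nested/nested merge/split) as in \fullref{subsec:multparameters}, and one reads off, case by case, the scalar produced topologically and compares it to the scalar prescribed in \fullref{definition:reverse-mult-arcalg}.

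The only case requiring a genuine modification from the multiplication setup is the non-nested split, where the combinatorial ``bottom/top'' distinction replaces the left/right distinction of a forward multiplication; this accounts for the asymmetric scalars $\paro\cdot\parsign^{\length_\Lambda(\gamma_{\mathrm{bot}}^{\mathrm{ndot}})}\cdot\parsign^{s_\Lambda(\gamma)}$ and $\parsign\cdot\paro\cdot\parsign^{\length_\Lambda(\gamma_{\mathrm{top}}^{\mathrm{ndot}})}\cdot\parsign^{s_\Lambda(\gamma)}$ appearing in \fullref{definition:reverse-mult-arcalg}, and similarly for clockwise $C$. The nested merge picks up the extra factor of $\parsign$ relative to \fullref{subsec:multparameters} because the orientation of the reverse saddle, with respect to the chosen basis points $\blacklozenge$ (see \fullref{remark:cupbasis}), forces an additional dot-migration step via \eqref{eq:dotmigration} whose sign contributes that $\parsign$. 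The remaining cases (non-nested merge and nested split) are symmetric under the vertical flip of the saddle and so the scalars coincide with those of the forward multiplication verbatim.

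The main obstacle is the same one faced in the proof of \fullref{theorem:matchalgebrasold} (deferred to \fullref{sec:proofs}): tracking the interaction of the basis-point conventions from \fullref{remark:cupbasis} with the saddle foam, and correctly accounting for the topological sign $\parsign^{\neatfrac{1}{4}(\length_\Lambda(C_{\mathrm{in}})-2)}$, the saddle sign $\parsign^{s_\Lambda(\gamma)}$ and the dot-moving signs produced in rewriting the composite foam back into the cup foam basis. These computations are lengthy but entirely mechanical once one follows the same bookkeeping as for the forward direction, so in the actual write-up I would give the non-nested merge as the prototypical case in full detail and indicate that the remaining three cases proceed analogously, noting only the sign modifications described above.
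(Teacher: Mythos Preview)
Your approach is essentially the same as the paper's: the paper derives the lemma from \fullref{proposition:match-ts}, which asserts exactly the commutative square you describe between $\boldsymbol{\mathrm{rMult}}$ and $\boldsymbol{\mathrm{rmult}}$ under the identifications $\Iso{\cdot}{\cdot}$, and whose proof is the same case-by-case comparison you outline (the paper phrases it as redoing the proof of \fullref{theorem:matchalgebrasold} with the pictures rotated by $\pi/2$, which accounts for the bottom/top versus left/right relabeling in the non-nested split and the extra $\parsign$ in the nested merge). One small caveat: your heuristic that the extra $\parsign$ in the nested merge comes specifically from an additional dot migration is not quite how it arises---it is a byproduct of the rotated neck-cutting/squeezing bookkeeping rather than a separate dot move---but the actual computation you propose would sort this out correctly.
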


\begin{proof}
This follows 
from \fullref{proposition:match-ts} below.
\end{proof}

As before, we have the following, 
recalling the equivalence $\boldsymbol{\Iso{}{}}$ 
from \fullref{theorem:matchalgebras}
induced by the isomorphism $\Iso{}{}$ from \eqref{eq:isoalgebras2}
under which the web bimodules 
$\M[\parameterS](\web(\compmatch,\compmatcht))$ and the 
arc bimodules $\Arcalg(\compmatch,\compmatcht)$ 
are identified.

\begin{proposition}\label{proposition:match-ts}
Fixing the 
$\ring$-linear isomorphisms 
$\Iso{\cdot}{\cdot}$ from \eqref{eq:isoalgebrasnext}, it holds 
\begin{gather}\label{eq:again-something-4}
\Iso{\web(\compmatch^{\prime},\compmatcht^{\prime})}{(\compmatch^{\prime},\compmatcht^{\prime})}\circ\boldsymbol{\mathrm{rMult}}_{\web(\compmatch,\compmatcht)}^{\web(\compmatch^{\prime},\compmatcht^{\prime})}=
\boldsymbol{\mathrm{rmult}}_{\compmatcht}
^{\compmatcht^{\prime}}
\circ\Iso{\web(\compmatch,\compmatcht)}{(\compmatch,\compmatcht)}.
\end{gather}
\end{proposition}

\begin{proof}
Similar to the proof of 
\fullref{theorem:matchalgebrasold}. Indeed, we can use the same 
argumentation (noting that 
the shifting basic moves 
as in the first two columns of \cite[(31)]{EST} can be incorporated), but we turn the corresponding 
pictures by $\neatfrac{\pi}{2}$ (which gives the slight changes for the 
scalars in the algebraic setting). We skip the calculations for brevity.
\end{proof}

We now aim to match the bimodule maps for different 
specializations of $\parameterS$ as in 
\fullref{subsec:iso-alg} and \fullref{subsec:iso-bimod}. For this purpose, 
we define a coefficient map which is again slightly 
modified.
In particular, we use the same 
notations as in \fullref{definition:stacked2}.

\begin{definition}\label{definition:the-coefficient-map3}
For fixed $\stacked$, we define its 
\textit{reverse coefficient map} $\coeffb$ as
\begin{gather}\label{eq:label?}
\begin{aligned}
\coeffb_{\parsign}(C,\stacked^{\mathrm{or}}) \;=\;& \phantom{..}
{\textstyle\prod_{\gamma \in \ecups(C)}}\; 
\parsign^{s_{\Lambda}(\gamma)(\pos_{\Lambda}(\gamma)+1)} \cdot
{\textstyle\prod_{\gamma \in \ecaps(C)}}\; 
\parsign^{(s_{\Lambda}(\gamma)+1)\pos_{\Lambda}(\gamma)}
\\
&\cdot
{\textstyle\prod_{\gamma \in\irights(C)}}\; 
\parsign^{\pos_{\Lambda}(\gamma)+1}
\cdot 
{\textstyle\prod_{\gamma \in\ilefts(C)}}\;
\parsign^{\pos_{\Lambda}(\gamma)},
\\
\coeffb_{\paro}(C,\stacked^{\mathrm{or}}) \;=\;&
{\textstyle\prod_{\gamma \in \ecups(C)}}\;
\paro^{-s_{\Lambda}(\gamma)+1}
\cdot 
{\textstyle\prod_{\gamma \in \ecaps(C)}}\;
\paro^{s_{\Lambda}(\gamma)} \cdot 
\paro^{\# \left( \iright(C)\, {\scriptstyle \cup}\, \ileft(C) \right) }
\end{aligned}
\end{gather}
instead of \eqref{eq:the-crazy-map-def}, 
and a further factor of $\parsign^{\mathrm{t}(C)}$ for the clockwise circle.
\end{definition}

For the following proposition we use the evident 
notation to distinguish the reverse multiplication maps from 
\fullref{definition:reverse-mult} for different 
choices of specializations.

\begin{proposition}\label{proposition:crazy-module-maps-yet-again}
The homogeneous, $\ring$-linear map 
(defined as in \eqref{eq:the-crazy-map-modules}, but using $\coeffb$ instead 
of $\coeff$)
$\coeffb_{\compmatch,\compmatcht} \colon  
\Mo[\KBN](\compmatch,\compmatcht)
\to \Mo[\parameterS](\compmatch,\compmatcht)$
is an isomorphism of graded, free $\ring$-modules 
such that the following commutes:
\begin{gather}\label{eq:diagram-which-commutes}
\begin{aligned}
\xy
\xymatrix{
\Mo[\KBN](\compmatch,\compmatcht)
\ar[rr]^{\boldsymbol{\mathrm{rmult}}[\KBN]_{\compmatcht}^{\compmatcht^{\prime}}}
\ar[d]_{\coeffb_{\compmatch,\compmatcht}} && \Mo[\KBN](\compmatch^{\prime},\compmatcht^{\prime})
\ar[d]^{\coeffb_{\compmatch^{\prime},\compmatcht^{\prime}}}\\
\Mo[\parameterS](\compmatch,\compmatcht)
\ar[rr]_{\boldsymbol{\mathrm{rmult}}[\parameterS]_{\compmatch}^{\compmatcht^{\prime}}}
&& \Mo[\parameterS](\compmatch^{\prime},\compmatcht^{\prime}).
}
\endxy
\end{aligned}
\end{gather}
\end{proposition}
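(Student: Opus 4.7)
The plan is to follow closely the template of the proof of \fullref{proposition:crazy-module-maps}, adapted to the reverse-surgery setting. First I would observe that $\coeffb_{\compmatch,\compmatcht}$ is a degree-preserving $\ring$-linear map between graded free $\ring$-modules of the same rank (both sides have basis $\CUPBasis{\compmatch,\compmatcht}$), and it rescales each basis element $\stacked^{\rm or}$ by a Laurent monomial in the units $\parsign^{\pm 1},\paro^{\pm 1}$; hence it is automatically an isomorphism of graded free $\ring$-modules. Degree preservation needs only a short check: the exponents appearing in $\coeffb$ involve only position and saddle-width data together with counts of horizontal-shift arcs, none of which alter the degree recorded in \fullref{definition:degreecups}.

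The commutativity of \eqref{eq:diagram-which-commutes} can then be checked on basis elements and reduces, for a single elementary reverse surgery $\stacked\rightsquigarrow\stacked^{\prime}$, to the identity
\[
\coeffb_{\stacked^{\prime}}(\stackedd^{\rm or})=\coeffb_{\stacked}(\stacked^{\rm or})\cdot\coeffb(\parameterS),
\]
where $\coeffb(\parameterS)$ is the scalar prescribed by \fullref{definition:reverse-mult-arcalg} for the summand $\stackedd^{\rm or}$ arising from $\stacked^{\rm or}$, and the analogous identity for $\KBN$ (where $\coeffb(\KBN)$ collapses to $1$ modulo $\parto$) is automatic. I would verify this case-by-case: non-nested merge, nested merge, non-nested split, and nested split, each subdivided by the orientations of the incoming/outgoing circles, precisely as in the proof of \fullref{proposition:crazy-module-maps}.

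The main computation in each case is to compare how the sets $\icup,\icap,\ecup,\ecap,\iright,\ileft,\eright,\eleft$ change under the reverse surgery, together with the effect of a single saddle on $s_{\Lambda}(\gamma)$ and $\pos_{\Lambda}$. The crucial structural observation is that a reverse surgery is topologically the horizontal mirror of a forward surgery (the saddle foam is read top-to-bottom rather than bottom-to-top), and this mirror exchanges the roles of interior and exterior cups/caps. This precisely matches the difference between $\coeff$ and $\coeffb$ in \fullref{definition:the-coefficient-map2} and \fullref{definition:the-coefficient-map3}: the two products interchange $\icup\leftrightarrow\ecup$ and $\icap\leftrightarrow\ecap$, the exponents rotate between $(s_{\Lambda}(\gamma){+}1)\pos_{\Lambda}(\gamma)$ and $s_{\Lambda}(\gamma)(\pos_{\Lambda}(\gamma){+}1)$, the $\paro$ shifts by $\paro^{1}$ on each cup/cap, and the $\iright/\ileft$ versus $\eright/\eleft$ roles are swapped. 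Consequently the verification reduces, essentially mechanically, to the same mod-$2$ and Laurent-monomial bookkeeping already carried out for the forward case in \fullref{sec:proofs}.

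The main obstacle is precisely this bookkeeping: keeping track of the saddle sign, the nesting sign, and the dot-moving signs for each of the four surgery types and each choice of orientations, and ensuring that the three contributions (the $\parsign$- and $\paro$-exponents prescribed by \fullref{definition:reverse-mult-arcalg}, and the ratio of $\coeffb$-coefficients between $\stacked$ and $\stacked^{\prime}$) agree as elements of $\ring$. A convenient sanity check—and an alternative organisation of the delicate nested cases—is to invoke \fullref{proposition:match-ts} to translate $\boldsymbol{\mathrm{rmult}}$ into the reverse-surgery foams on web bimodules via $\Iso{}{}$: then the identity above becomes a topological statement about saddle foams read upside-down, which one can derive from the foam evaluations underlying \fullref{proposition:crazy-module-maps} by a $\neatfrac{\pi}{2}$ rotation, exactly as sketched in the proof of \fullref{proposition:match-ts}. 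I would use this topological reformulation to handle nested merges and nested splits, where the extra factor of $\parsign$ in \fullref{definition:reverse-mult-arcalg} is most easily tracked via the mirrored topological sign $\parsign^{\neatfrac{1}{4}(\length_{\Lambda}(C_{\mathrm{in}})-2)}$.
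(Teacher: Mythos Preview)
Your proposal is correct and follows essentially the same approach as the paper: the paper's proof simply says to mimic the proof of \fullref{proposition:crazy-module-maps} case by case over \ref{enum:nnm}--\ref{enum:ns}, noting that the non-nested cases go through verbatim while the nested cases require successive applications of \fullref{lemma:comparesaddles-bimodules}. Your additional conceptual remark about the interior/exterior swap mirroring the passage from $\coeff$ to $\coeffb$ is a helpful organizing principle the paper leaves implicit; the only small divergence is that for the nested cases the paper stays purely on the algebraic side via \fullref{lemma:comparesaddles-bimodules}, whereas you offer the topological reformulation through \fullref{proposition:match-ts} as an alternative route---both lead to the same bookkeeping.
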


\begin{proof}
This is a dual version of the 
proof that 
$\coeff_{\compmatch,\compmatcht} \colon  
\Mo[\KBN](\compmatch,\compmatcht)
\to \Mo[\parameterS](\compmatch,\compmatcht)$ 
from \fullref{proposition:crazy-module-maps} 
intertwines the actions of $\ArcalgS[\KBN]$ and $\Arcalg$ 
(again checking the cases \ref{enum:nnm}-\ref{enum:ns} as in 
the proof of \fullref{theorem:matchalgebrasold}) with the 
following differences: 
the non-nested cases work analogously, while in the nested cases one needs 
to successively apply \fullref{lemma:comparesaddles-bimodules} 
as in the presented nested merge case in the proof 
of \fullref{proposition:crazy-module-maps}.
\end{proof}

Similar to \eqref{eq:coeff-in-action}, but
using \fullref{proposition:crazy-module-maps-yet-again} 
and the corresponding maps, 
we define
\begin{gather}\label{eq:again-something-10}
\coeffb_{\Isonew{}{}}\colon\Mo[\parameterS](\compmatch,\compmatcht)
\to\Mo[\KBN](\compmatch,\compmatcht)
\to\Mo[\parto,\specS(\parsign),\specS(\paro)](\compmatch,\compmatcht).
\end{gather}
The following is now clear because the proof 
of \fullref{proposition:crazy-module-maps-yet-again} 
does not use the specific form of the parameters in question.

\begin{corollary}\label{corollary:crazy-module-maps-yet-again}
The map $\coeffb_{\Isonew{}{}}$
is an isomorphism of graded, free $\ring$-modules such that 
the corresponding diagram in \eqref{eq:diagram-which-commutes} 
commutes.
\end{corollary}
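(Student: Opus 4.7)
The plan is to bootstrap from Proposition \ref{proposition:crazy-module-maps-yet-again}, mimicking the strategy used to deduce Theorem \ref{theorem:crazy-isomorphism2} from Proposition \ref{proposition:crazy-isomorphism} and Corollary \ref{corollary:crazy-module-maps} from Proposition \ref{proposition:crazy-module-maps}. First I would point out that the formulas in Definitions \ref{definition:the-coefficient-map3} and \ref{definition:the-coefficient-map2} only involve the parameters $\parsign$ and $\paro$ through their symbolic identities, i.e. they rely only on the fact that $\parsign^2=1$ and that $\paro$ is invertible (as well as the degree-four parameter $\parto$ which is not changed). Consequently, replacing $\parsign$ and $\paro$ by $\specS(\parsign)$ and $\specS(\paro)$ in the formulas produces, via the same calculation, a homogeneous, $\ring$-linear map
\[
\coeffb^{\parto,\specS(\parsign),\specS(\paro)}_{\compmatch,\compmatcht}\colon
\Mo[\KBN](\compmatch,\compmatcht)\longrightarrow\Mo[\parto,\specS(\parsign),\specS(\paro)](\compmatch,\compmatcht),
\]
which is an isomorphism (its inverse is obtained by inverting each coefficient, since each coefficient is a monomial in $\specS(\parsign)=\pm 1$ and $\specS(\paro)^{\pm 1}$).

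Next I would verify the commutative square analogous to \eqref{eq:diagram-which-commutes}: the proof of Proposition \ref{proposition:crazy-module-maps-yet-again} is a case-by-case check (non-nested merge, nested merge, non-nested split, nested split) against the reverse multiplication rules of Definition \ref{definition:reverse-mult-arcalg}, and every identity of signs and scalars that appears there is formal in the two parameters. Re-running that verification with $\specS(\parsign)$ and $\specS(\paro)$ in place of $\parsign$ and $\paro$ produces the commutative diagram
\[
\xy
\xymatrix{
\Mo[\KBN](\compmatch,\compmatcht)
\ar[rr]^{\boldsymbol{\mathrm{rmult}}[\KBN]_{\compmatcht}^{\compmatcht^{\prime}}}
\ar[d]_{\coeffb^{\parto,\specS(\parsign),\specS(\paro)}_{\compmatch,\compmatcht}}
&& \Mo[\KBN](\compmatch^{\prime},\compmatcht^{\prime})
\ar[d]^{\coeffb^{\parto,\specS(\parsign),\specS(\paro)}_{\compmatch^{\prime},\compmatcht^{\prime}}}\\
\Mo[\parto,\specS(\parsign),\specS(\paro)](\compmatch,\compmatcht)
\ar[rr]_{\boldsymbol{\mathrm{rmult}}[\parto,\specS(\parsign),\specS(\paro)]_{\compmatcht}^{\compmatcht^{\prime}}}
&& \Mo[\parto,\specS(\parsign),\specS(\paro)](\compmatch^{\prime},\compmatcht^{\prime}).
}
\endxy
\]

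Finally, the map $\coeffb_{\Isonew{}{}}=\coeffb^{\parto,\specS(\parsign),\specS(\paro)}_{\compmatch,\compmatcht}\circ(\coeffb_{\compmatch,\compmatcht})^{-1}$ is an isomorphism of graded, free $\ring$-modules because both factors are such isomorphisms. The desired commutative square for $\coeffb_{\Isonew{}{}}$ is then obtained by pasting the two commutative diagrams from Proposition \ref{proposition:crazy-module-maps-yet-again} and from the previous paragraph along the top row indexed by $\KBN$: the left and right vertical arrows in the pasted outer square are precisely $\coeffb_{\Isonew{}{}}$ for $(\compmatch,\compmatcht)$ and $(\compmatch^{\prime},\compmatcht^{\prime})$. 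The case of a further simultaneous specialization of $\parto$ is handled by the same argument applied after evaluating $\parto$ in the ground ring, which has no effect on the formulas apart from the substitution. No step is genuinely hard here; the only point that needs care is that the intermediate passage through $\Mo[\KBN]$ is valid for the specialized parameters too, which is exactly what the parameter-free nature of the scalar manipulations in Definitions \ref{definition:the-coefficient-map3} and \ref{definition:reverse-mult-arcalg} guarantees.
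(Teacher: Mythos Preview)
Your proposal is correct and follows essentially the same approach as the paper. The paper's entire justification is the single sentence preceding the corollary, namely that the proof of \fullref{proposition:crazy-module-maps-yet-again} does not use the specific form of the parameters in question; your argument is simply a more detailed unpacking of that observation, together with the routine pasting of the two resulting commutative squares through the intermediate $\KBN$ row.
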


\begin{example}\label{example:yup-needed!}
Denote the diagrams in \fullref{ex:yup-needed-yet-again} 
from left to right by $\stacked_1$, 
$\stacked_2$ and $\stacked_3$. Then
$\coeffb_{\stacked_1}(\stacked_1^{\mathrm{or}})=
\parsign\cdot\paro\cdot
\stacked_1^{\mathrm{or}}$,
$\coeffb_{\stacked_2}(\stacked_2^{\mathrm{or}})=
1\cdot
\stacked_2^{\mathrm{or}}$ 
and
$\coeffb_{\stacked_3}(\stacked_3^{\mathrm{or}})=
\parsign\cdot
\stacked_3^{\mathrm{or}}$.
Thus, we see that \eqref{eq:diagram-which-commutes} commutes in this example.
\end{example}

\begin{remark}\label{remark:new-coeff-intertwines}
We will see in \fullref{lemma:signs-are-constant} that 
$\coeffb_{\stacked}(\stacked^{\mathrm{or}})$ can be expressed 
in terms of $\coeff_{\stacked}(\stacked^{\mathrm{or}})$ times a 
constant that can either be determined by counting 
cups or by counting caps as well as shifts. Hence, it is 
evident that for $x \in \ArcalgS[\KBN]$ and any
$m \in \Mo[\KBN](\compmatch,\compmatcht)$ 
it holds that
$\coeffb_{\compmatch,\compmatcht}(x\cdot m)
=\coeff(x)\cdot\coeffb_{\compmatch,\compmatcht}(m)$. (Similarly for the right action.)
Thus, 
$\coeffb$ is a graded, free $\ring$-modules isomorphism
intertwining the two actions.
\end{remark}

\begin{lemma}\label{lemma:signs-are-constant-next}
The compositions 
\begin{gather}\label{eq:again-something-11}
\coeffb_{\Isonew{}{}}^{-1}\circ\,\coeff_{\Isonew{}{}},
\coeff_{\Isonew{}{}}^{-1}\circ\,\coeffb_{\Isonew{}{}}
\colon\Mo[\parameterS](\compmatch,\compmatcht)\to\Mo[\parameterS](\compmatch,\compmatcht)
\end{gather}
are $\Arcalg$-bimodule maps.
\end{lemma}

\begin{proof}
This follows from \fullref{lemma:signs-are-constant} via \fullref{remark:new-coeff-intertwines},
and \fullref{proposition:crazy-module-maps} 
(and, as before, that our arguments do not use the specific form of the 
parameters in question).
\end{proof}
\subsection{Consequences}\label{subsec:consequences}
\makeautorefname{theorem}{Theorems}

We now formulate some consequences of 
\fullref{theorem:crazy-isomorphism2}, \ref{theorem:matchalgebrasnew} 
and \ref{theorem:matchalgebrasold}. 
By construction, 
$\Upsilon$ from \fullref{proposition:cats-are-equal} 
gives rise to a based version $\Upsilon^{\based}$.

\makeautorefname{theorem}{Theorem}

\begin{proposition}\label{proposition:cats-are-equal-yes}
There is an equivalence of graded, $\ring$-linear $2$-categories
\begin{gather}\label{eq:again-something-12}
\Upsilon^{\based}\colon\F^\pm[\parameterS]\overset{\cong}{\longrightarrow}\webcatScircpm,
\end{gather}
which is bijective on objects.
Similarly for $\specS\colon\ring\to R$ 
such that either:
\begin{enumerate}[label=(\alph*)]

\item \label{enum:generic} $\specS(\parto)=\parto$ is generic (i.e. an indeterminate) or $\specS(\parto)=0$.

\item \label{enum:semisimple} $\specS(\parto)$ is invertible, 
$\sqrt{\specS(\parto)}\in R$ and $\neatfrac{1}{2}\in R$.

\end{enumerate}
The algebras in question 
are semisimple under the circumstances of \ref{enum:semisimple}.
\end{proposition}

The proof 
is given in \fullref{subsec:proof-four}. 
The main step is to calculate the ranks of hom-spaces 
between bimodules. By web evaluation \eqref{eq:web-eval} 
and braid closing, this does not depend on whether we are working 
in the upwards oriented setting or not.

\begin{theorem}\label{theorem:all-equivalent}
Let $R[\parto]$ and $\specS$ be as 
in \fullref{theorem:matchalgebrasnew}. 
Then there are equivalences 
of graded, $\ring$-linear $2$-categories
\begin{gather}\label{eq:again-something-13}
\F[\parameterS]\cong\F[\parto,\specS(\parsign),\specS(\paro)].
\end{gather}
(Similarly for any 
simultaneous specialization of $\parto$ 
satisfying the conditions \ref{enum:generic} or \ref{enum:semisimple} 
from \fullref{proposition:cats-are-equal-yes}.)
\end{theorem}

\begin{proof}
This is just assembling all the pieces. First we use 
\fullref{proposition:cats-are-equal-yes} to see that 
both sides are equivalent 
to the corresponding module categories of (specialized) web algebras. 
Then we use \fullref{theorem:matchalgebras} to 
translate it to the corresponding 
arc algebras. Finally, using
\fullref{theorem:matchalgebrasnew} 
provides the statement.
\end{proof}

If one works over $\Z[\parto]$, then 
\fullref{theorem:all-equivalent} 
shows that the $2$-categories 
coming from the $\KBN$ and $\Bl$ setups  
are equivalent.
Together with \fullref{proposition:special}, we get the following stronger result in case there exists a
square root of $-1$.

\begin{corollary}\label{corollary:all-equivalent}
There are equivalences 
of graded, $\Z[\parto,i]$-linear $2$-categories
\begin{gather}\label{eq:four2cats}
\F_{\Z[\parto,i]}[\KBN]\cong
\F_{\Z[\parto,i]}[\Ca]\cong
\F_{\Z[\parto,i]}[\CMW]\cong
\F_{\Z[\parto,i]}[\Bl].
\end{gather}
(Similarly, by using $\Z[\sqrt{\specS(\parto)}^{\pm 1},\neatfrac{1}{2},i]$, 
for any 
simultaneous specialization of $\parto$ 
satisfying the condition \ref{enum:semisimple} 
from \fullref{proposition:cats-are-equal-yes}.)
\end{corollary}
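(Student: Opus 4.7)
The plan is to deduce the corollary directly from two results already assembled in the excerpt: \fullref{proposition:special}, which identifies each of the four named $2$-categories with a $5$-parameter foam $2$-category under an explicit specialization of $\parameter$, and \fullref{theorem:all-equivalent}, which gives equivalences between any two $\parameterS$-type specializations. The only work left is to check that, after extending scalars to $\Z[\parto,i]$, each of the four specializations from \eqref{convention:main-specs} factors through the map $\parameter \to \parameterS$ of \eqref{eq:specialization}, so that \fullref{theorem:all-equivalent} applies.

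First I would solve the defining equations $\parepi = 1$, $\parep = \parsign\paro^{2}$, $\parom = \paro$, $\parome = \parsign\paro$ for each of the four parameter tuples in \eqref{convention:main-specs}. This gives: for $\KBN$, $(\parsign,\paro) = (1,1)$; for $\Bl$, $(\parsign,\paro) = (-1,1)$; and for both $\Ca$ and $\CMW$, $(\parsign,\paro) = (-1,i)$ (this last one requires a square root of $-1$, which is exactly why one needs to extend scalars to $\Z[\parto,i]$). Thus each of the four foam $2$-categories on the left of \eqref{eq:four2cats} is identified, via \fullref{proposition:special}, with an $\F_{\Z[\parto,i]}[\parto,\specS(\parsign),\specS(\paro)]$ for the appropriate values listed.

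Next I would invoke \fullref{theorem:all-equivalent} applied with $R[\parto] = \Z[\parto,i]$ and the two pairs of values $(1,1)$ and $(-1,i)$ (respectively $(-1,1)$), obtaining chains of equivalences
\[
\F_{\Z[\parto,i]}[\parto,1,1] \cong \F_{\Z[\parto,i]}[\parto,-1,i] \cong \F_{\Z[\parto,i]}[\parto,-1,1]
\]
of graded, $\Z[\parto,i]$-linear $2$-categories. Composing these equivalences with the isomorphisms coming from \fullref{proposition:special} yields \eqref{eq:four2cats}.

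Because the two ingredients are already available and \fullref{theorem:all-equivalent} works verbatim for any simultaneous specialization of $\parto$ satisfying the conditions \ref{enum:generic} or \ref{enum:semisimple} of \fullref{proposition:cats-are-equal-yes}, the parenthetical ``similarly'' claim at the end of the corollary also follows with no additional effort; one simply extends scalars to $\Z[\sqrt{\specS(\parto)}^{\pm 1},\neatfrac{1}{2},i]$ instead of $\Z[\parto,i]$ so that both the hypotheses of \fullref{theorem:all-equivalent} and the existence of a square root of $-1$ are simultaneously available. The only conceptual check is the one made in the first paragraph, namely that each of the four named specializations lies in the image of the map $\parameter \to \parameterS$---so there is no genuine obstacle here, and this is why the author labels the statement a corollary of the preceding theorem.
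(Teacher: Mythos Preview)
Your proof is correct and follows exactly the approach the paper indicates: the statement is presented as a direct consequence of \fullref{proposition:special} and \fullref{theorem:all-equivalent}, and your explicit verification that each of the four specializations from \eqref{convention:main-specs} factors through the map \eqref{eq:specialization} (with $(\parsign,\paro)$ equal to $(1,1)$, $(-1,i)$, $(-1,i)$, $(-1,1)$ respectively) is precisely the check needed to apply the latter. The parenthetical claim likewise follows immediately from the parenthetical in \fullref{theorem:all-equivalent}, as you note.
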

\section{Link and tangle invariants}\label{sec:further}
Given a tangle diagram $\overline{T}$,
we now construct a $6$\textit{-parameter chain complex}
$\Hgenpm{{{}_-}}\!\colon\overline{T}\mapsto\Hgenpm{\,\overline{T}\,}$
with values in $\webcatpm$. 
We show in \fullref{proposition:its-an-invariant!} 
that its homotopy type is a tangle invariant, i.e. well-defined on isotopy class es of tangles.

We will see that the $6$-parameter complex specializes to the 
original $\KBN$ complex $\HKBNpm{{{}_-}}$, 
as well as to the 
versions $\HCapm{{{}_-}}$, $\HCMWpm{{{}_-}}$ 
and $\HBlpm{{{}_-}}$.
Moreover, by the explicit isomorphisms in \fullref{sec:iso}, 
this complex allows comparison of its various specializations.

We will treat $\Hgenpm{{{}_-}}$ and $\Hgen{{{}_-}}$ 
simultaneously and use the symbol $\pmandnot$ if 
a distinction is not necessary, e.g. $\webalgboth$ means either 
$\webalgpm$ or $\webalg$.
\subsection{Tangles and tangled webs}
\label{subsec:tangle-definition}
Recall that
an (algebraic) oriented \textit{tangle diagram} 
can be defined as a morphism in the monoidal category 
monoidally generated by the objects $\{0,+,-\}$ (zero is a ``dummy'') and the $1$-morphisms:
\begin{gather}\label{eq:tangles-def}
\raisebox{0.075cm}{\xy
(0,0)*{\includegraphics[scale=.65]{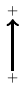}};
\endxy}
\quad,\quad
\xy
(0,0)*{\includegraphics[scale=.65]{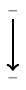}};
\endxy
\quad,\quad
\!\!
\xy
(0,0)*{\includegraphics[scale=.65]{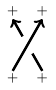}};
(0,6)*{\text{{\tiny$\phantom{a}$}}};
(0,-6)*{\text{{\tiny $+$ crossing}}};
\endxy
\!\!
\quad,\quad
\!\!
\xy
(0,0)*{\includegraphics[scale=.65]{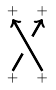}};
(0,6)*{\text{{\tiny$\phantom{a}$}}};
(0,-6)*{\text{{\tiny $-$ crossing}}};
\endxy
\!\!
\quad,\quad
\xy
(0,0)*{
\xy
(0,0)*{\includegraphics[scale=.65]{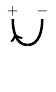}};
\endxy
\quad,\quad
\xy
(0,0)*{\includegraphics[scale=.65]{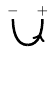}};
\endxy};
(0,6)*{\text{{\tiny$\phantom{a}$}}};
(0,-6)*{\text{{\tiny cups}}};
\endxy
\quad,\quad
\xy
(0,0)*{
\xy
(0,0)*{\includegraphics[scale=.65]{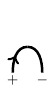}};
\endxy
\quad,\quad
\xy
(0,0)*{\includegraphics[scale=.65]{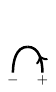}};
\endxy};
(0,6)*{\text{{\tiny$\phantom{a}$}}};
(0,-6)*{\text{{\tiny caps}}};
\endxy
\end{gather}
The third generator is called a \textit{positive crossing} and the fourth 
a \textit{negative crossing}; the cups and caps are $1$-morphisms with source respectively target $0$. 

We only consider tangle diagrams with an 
even number of bottom and top boundary points, called \textit{even tangle diagrams}. 
Local pictures with an odd number of strands can be fit into this framework 
by adding a ``dummy'' strand 
e.g. to the right.

\begin{remark}\label{remark:all-tangles}
The restriction to even tangle diagrams comes from the fact 
that we work with $\webalgboth[{{}_-}]$ and $\ArcalgS[{{}_-}]$. One could treat arbitrary tangles 
by using the generalized algebras (by including rays as e.g. in 
\cite{BS1}): it is clear by our results that one can 
follow \cite[Section 5]{CK} or \cite[Section 5]{Str2} 
to define parameter dependent complexes 
of bimodules for these generalized algebras giving 
rise to an invariant of arbitrary tangles. 
For brevity, we have decided to only consider $\webalgboth[{{}_-}]$ and $\ArcalgS[{{}_-}]$,
and treat these in detail. 
\end{remark} 

We study the following category, cf. \cite[Theorem XII.2.2]{Kas}.

\begin{definition}\label{definition:tanglecat}
The \textit{category of tangles} 
$\Tan$ is defined as follows

\begin{enumerate}[label=(\roman*)]

\item Objects are sequences $\vec{s},\vec{t}\in\{0,+,-\}^{\Z}$ 
with only a finite and even number of non-zero entries 
including $\emptyset=(\dots,0,0,0,\dots)$.

\item $1$-Morphisms 
from $\vec{s}$ to $\vec{t}$ are all tangle 
diagrams with source $\vec{s}$ and target $\vec{t}$ (after ignoring infinitely $0$'s) modulo some relations.

\item The relations are the usual \textit{tangle moves} 
(\cite[Section XII.2]{Kas}).

\item Composition of tangles is given via the evident gluing.
\end{enumerate}
\end{definition}

More precisely, the tangle moves are (see  \eqref{eq:wrong-r2} for illustrations):
\begin{gather}\label{enum:reide1}
\text{``Isotopies'', see \cite[(2.1) to (2.3)]{Kas}.}
\end{gather}
\begin{gather}\label{enum:reide2}
\begin{gathered}
\text{The Reidemeister moves \textbf{R1}, \textbf{R2}, 
and \textbf{R3}, see \cite[(2.4) to (2.6)]{Kas}.}
\\
\text{(\textbf{R2} 
and \textbf{R3} seen as braid moves, i.e. pointing only upwards.)}
\end{gathered}
\end{gather}
\begin{gather}\label{enum:reide3}
\begin{gathered}
\text{Mixed moves \textbf{mR2} \cite[(2.7) and  (2.8)]{Kas}.}\\  \text{(\textbf{R2} with two oppositely oriented strands.)}
\end{gathered}
\end{gather}

We call equivalence classes of tangle diagrams also \textit{tangles}. The elements from $\End_{\Tan}(\emptyset)$ 
are called \textit{links}. We denote by ${}_{\vec{s}}T_{\vec{t}}$ a $1$-morphism in 
$\Hom_{\Tan}(\vec{s},\vec{t})$ and by ${}_{\vec{s}}\overline{T}_{\vec{t}}$ 
a choice of a diagram representing ${}_{\vec{s}}T_{\vec{t}}$. 
Note that one can consistently define \textit{sideways and downwards oriented crossings} using the 
generators from \eqref{eq:tangles-def} in the usual way, cf. \eqref{eq:sideways-crossing}.

We also need an extension of the notion of webs from \fullref{subsec:foams}. 
We let \textit{(upwards oriented) tangled webs}, 
 be defined as (upwards oriented) webs, but 
additionally allowing
\begin{gather}\label{eq:crossings-webs}
\xy
(0,0)*{\includegraphics[scale=.65]{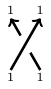}};
(0,6)*{\text{{\tiny$\phantom{a}$}}};
(0,-6)*{\text{{\tiny $+$ crossing}}};
\endxy
\!\!
\quad,\quad
\!\!
\xy
(0,0)*{\includegraphics[scale=.65]{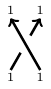}};
(0,6)*{\text{{\tiny$\phantom{a}$}}};
(0,-6)*{\text{{\tiny $-$ crossing}}};
\endxy
\!\!
\quad,\quad
\xy
(0,0)*{
\xy
(0,0)*{\includegraphics[scale=.65]{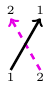}};
\endxy
\quad,\quad
\xy
(0,0)*{\includegraphics[scale=.65]{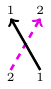}};
\endxy
\quad,\quad
\xy
(0,0)*{\includegraphics[scale=.65]{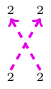}};
\endxy};
(0,6)*{\text{{\tiny$\phantom{a}$}}};
(0,-6)*{\text{{\tiny phantom crossing}}};
\endxy
\end{gather}
called \textit{positive crossing}, \textit{negative crossing} 
and \textit{phantom crossings}. For example,
\begin{gather}\label{eq:sideways-crossing}
\xy
(0,0)*{
\xy
(0,0)*{\includegraphics[scale=.65]{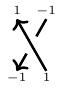}};
\endxy
=
\xy
(0,0)*{\includegraphics[scale=.65]{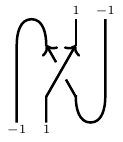}};
\endxy
\quad,\quad
\xy
(0,0)*{\includegraphics[scale=.65]{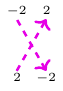}};
\endxy
=
\xy
(0,0)*{\includegraphics[scale=.65]{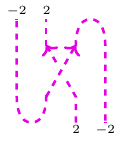}};
\endxy};
(0,10)*{\text{{\tiny$\phantom{a}$}}};
(0,-10)*{\text{{\tiny in the ${}^\pm$ setting}}};
\endxy
\quad,\quad
\xy
(0,0)*{
\xy
(0,0)*{\includegraphics[scale=.65]{figs/fig5-sideways1.pdf}};
\endxy
=
\xy
(0,0)*{\includegraphics[scale=.65]{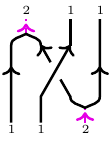}};
\endxy};
(0,10)*{\text{{\tiny$\phantom{a}$}}};
(0,-10)*{\text{{\tiny upwards oriented}}};
\endxy
\end{gather}
We write $\Hom_{\mathrm{tw}}^\pm(\vec{k},\vec{l})$ 
respectively $\Hom_{\mathrm{tw}}(\vec{k},\vec{l})$
for the set of tangled webs respectively 
upwards oriented tangled webs with bottom sequences $\vec{k}$ 
and top sequence $\vec{l}$.

Note that each tangle diagram can be viewed as a tangled web, so we can denote by $\web^{\pm}({}_-)$ the corresponding inclusion.
Moreover, we also define a map 
$\web({}_-)\colon {}_{\vec{s}}\overline{T}_{\vec{t}}\mapsto\web({}_{\vec{s}}\overline{T}_{\vec{t}})$ 
to upwards pointing tangled webs in $\Hom_{\mathrm{tw}}(\vec{k},\vec{l})$ 
with $\vec{k}=\omega_{\ell+s}+\omega_{s}$ and 
$\vec{l}=\omega_{\ell+t}+\omega_{t}$ via
\begin{gather}\label{eq:from-tangles-to-webs}
\begin{gathered}
\phantom{.}
\xy
(0,0)*{\includegraphics[scale=.65]{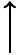}};
\endxy
\rightsquigarrow
\xy
(0,0)*{\includegraphics[scale=.65]{figs/fig5-14.pdf}};
\endxy
\\
\phantom{.}
\xy
(0,0)*{\includegraphics[scale=.65]{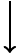}};
\endxy
\rightsquigarrow
\xy
(0,0)*{\includegraphics[scale=.65]{figs/fig5-14.pdf}};
\endxy
\end{gathered}
\quad,\quad
\begin{gathered}
\phantom{.}
\xy
(0,0)*{\includegraphics[scale=.65]{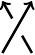}};
\endxy
\rightsquigarrow
\xy
(0,0)*{\includegraphics[scale=.65]{figs/fig5-16.pdf}};
\endxy
\\
\phantom{.}
\xy
(0,0)*{\includegraphics[scale=.65]{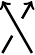}};
\endxy
\rightsquigarrow
\xy
(0,0)*{\includegraphics[scale=.65]{figs/fig5-17.pdf}};
\endxy
\end{gathered}
\quad,\quad
\begin{gathered}
\phantom{.}
\xy
(0,0)*{\includegraphics[scale=.65]{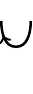}};
\endxy
\rightsquigarrow
\xy
(0,0)*{\includegraphics[scale=.65]{figs/fig4-4.pdf}};
\endxy
\\[-10pt]
\phantom{.}
\xy
(0,0)*{\includegraphics[scale=.65]{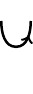}};
\endxy
\rightsquigarrow
\xy
(0,0)*{\includegraphics[scale=.65]{figs/fig4-4.pdf}};
\endxy
\end{gathered}
\quad,\quad
\begin{gathered}
\phantom{.}
\xy
(0,0)*{\includegraphics[scale=.65]{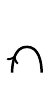}};
\endxy
\rightsquigarrow
\xy
(0,0)*{\includegraphics[scale=.65]{figs/fig4-6.pdf}};
\endxy
\\[-10pt]
\phantom{.}
\xy
(0,0)*{\includegraphics[scale=.65]{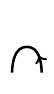}};
\endxy
\rightsquigarrow
\xy
(0,0)*{\includegraphics[scale=.65]{figs/fig4-6.pdf}};
\endxy
\end{gathered}
\end{gather}
By using the phantom crossings from \eqref{eq:crossings-webs}, 
one can always rearrange everything such that one can start in 
$\omega_{\ell+s}+\omega_{s}$ and end in $\omega_{\ell+t}+\omega_{t}$. 
This association is far from being unique, but what we are going to do will not 
depend on the choice of the map $\web({}_-)$. We can make any such choice, e.g. extending the construction from \fullref{subsec:algmodel}.

\begin{example}\label{example:from-tangles-to-webs}
The following illustrates the two maps:
\begin{gather}\label{eq:blueprint?}
\xy
(0,0)*{\includegraphics[scale=.65]{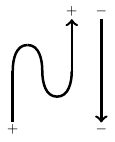}};
\endxy
\quad 
\stackrel{\web^\pm}{\leftsquigarrow} 
\quad
\xy
(0,0)*{\includegraphics[scale=.65]{figs/fig5-22.pdf}};
\endxy
\quad 
\stackrel{\web}{\rightsquigarrow} 
\quad
\xy
(0,0)*{\includegraphics[scale=.65]{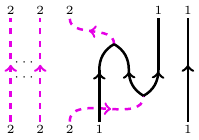}};
\endxy
\end{gather}
In the upwards oriented case, we have to pull the phantom edge from left to right.
\end{example}
\subsection{The \texorpdfstring{$6$}{6}-parameter complex}
\label{subsec:tangle-invariants}
Fix an additive, $\field$-linear, graded $2$-category $\mathfrak{X}$. 
A \textit{chain complex} $(C_i,d_i)_{i\in\Z}$ 
\textit{with values in} $\mathfrak{X}$ 
is a chain complex whose chain groups $C_i$ are the $1$-morphisms from 
$\mathfrak{X}$ and whose 
differentials $d_{i}$ are $2$-morphisms of $\mathfrak{X}$
such that $d_{i+1}\circ d_i=0$ for all $i\in\Z$. 
Such a complex $(C_i,d_i)_{i\in\Z}$ is called \textit{bounded}, if $C_i=0$ for $|i|\gg 0$.
Denote by $\Kom(\mathfrak{X})$ \textit{the category 
of bounded complexes with values in} 
$\mathfrak{X}$ (with chain maps given by $2$-morphisms). Further, we denote by $\Komh(\mathfrak{X})$ 
the bounded homotopy category.

\begin{definition}\label{definition:basic-complexes}
The \textit{basic complexes} are objects in 
$\Kom(\webcatcircboth)$ of the form
\begin{gather}
\Hgenboth{
\xy
(0,0)*{\includegraphics[scale=.65]{figs/fig5-9.pdf}};
\endxy
}
=
\xy
\xymatrix{
\underline{
\xy
(0,0)*{\includegraphics[scale=.65]{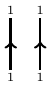}};
\endxy
}\langle+1\rangle \ar[r]^/-.10cm/{
\xy
(0,0)*{\includegraphics[scale=.6]{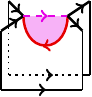}};
\endxy
}
&
\raisebox{.075cm}{\xy
(0,0)*{\includegraphics[scale=.65]{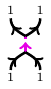}};
\endxy}
\langle+2\rangle,
}
\endxy
\quad
\Hgenboth{
\xy
(0,0)*{\includegraphics[scale=.65]{figs/fig5-10.pdf}};
\endxy
}
=
\xy
\xymatrix{
\raisebox{.075cm}{\xy
(0,0)*{\includegraphics[scale=.65]{figs/fig5-25.pdf}};
\endxy}
\langle-2\rangle \ar[r]^/-.10cm/{
\xy
(0,0)*{\includegraphics[scale=.6]{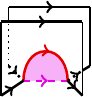}};
\endxy
}
&
\underline{
\xy
(0,0)*{\includegraphics[scale=.65]{figs/fig5-24.pdf}};
\endxy
}\langle-1\rangle,}
\endxy
\label{eq:basic-complexes1}
\\
\Hgenboth{
\xy
(0,0)*{\includegraphics[scale=.65]{figs/fig5-11.pdf}};
\endxy
}
=
\underline{
\xy
(0,0)*{\includegraphics[scale=.65]{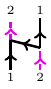}};
\endxy
}
\quad,\quad
\Hgenboth{
\xy
(0,0)*{\includegraphics[scale=.65]{figs/fig5-12.pdf}};
\endxy
}
=
\underline{
\xy
(0,0)*{\includegraphics[scale=.65]{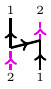}};
\endxy
}
\quad,\quad
\Hgenboth{
\xy
(0,0)*{\includegraphics[scale=.65]{figs/fig5-13.pdf}};
\endxy
}
=
\underline{
\xy
(0,0)*{\includegraphics[scale=.65]{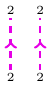}};
\endxy
}
\label{eq:basic-complexes3}
\end{gather}
where the underlined terms are in homological degree zero.
\end{definition}

Using these, we can associate as in \cite[Section 3.4]{Khov}, but now with parameters, to any 
$u_t\in\Hom_{\mathrm{tw}}^\pmandnot(\vec{k},\vec{l})$ an object 
$\Hgenboth{u_t}$ via the usual tensor products of the basic complexes.
In particular, we can
let $\Hgenboth{{}_{\vec{s}}\overline{T}_{\vec{t}}}=
\Hgenboth{\web^\pmandnot({}_{\vec{s}}\overline{T}_{\vec{t}})}$.
For $\star$ being $\pm$ or $+$,
we get a homological 
invariant of tangles:

\begin{proposition}\label{proposition:its-an-invariant!}
The assignment $\Hgenboth{{{}_-}}$
defines  a functor
\begin{gather}\label{eq:funny-functor}
\Hgenboth{{}_-}\colon\Tan\to\Komh(\webcatcircboth).
\end{gather}
\end{proposition}

The proof of \fullref{proposition:its-an-invariant!} follows the 
usual strategy. It is given in 
\fullref{subsec:proof-five}.

\begin{corollary}\label{example:our-friends}
We obtain all the link invariants from \eqref{eq:main-specs}:
\begin{gather}\label{eq:based-chain}
\begin{gathered}
\left\llbracket {}_{\vec{s}}\overline{T}_{\vec{t}}\right\rrbracket^\pmandnot_{(\parto,\parha,1,1,1,1)}
\cong_{\Z[\parto,\parha]}
\HKBNboth{{}_{\vec{s}}\overline{T}_{\vec{t}}},
\quad\quad
\left\llbracket {}_{\vec{s}}\overline{T}_{\vec{t}}\right\rrbracket^\pmandnot_{(\parto,\parha,1,1,i,-i)}
\cong_{\Z[\parto,\parha,i]}
\HCaboth{{}_{\vec{s}}\overline{T}_{\vec{t}}},
\\
\left\llbracket {}_{\vec{s}}\overline{T}_{\vec{t}}\right\rrbracket^\pmandnot_{(\parto,\parha,1,1,i,-i)}
\cong_{\Z[\parto,\parha,i]}
\HCMWboth{{}_{\vec{s}}\overline{T}_{\vec{t}}},
\quad\quad
\left\llbracket {}_{\vec{s}}\overline{T}_{\vec{t}}\right\rrbracket^\pmandnot_{(\parto,\parha,1,-1,1,-1)}
\cong_{\Z[\parto,\parha]}
\HBlboth{{}_{\vec{s}}\overline{T}_{\vec{t}}}.
\end{gathered}
\end{gather}
Hereby we denote specializations in 
our usual $6$-term notation, and we have indicated 
the rings one needs to work over for the isomorphisms to make sense.
\end{corollary}
\subsection{Comparison results}
\label{subsec:gl-and-sl-compare}
\makeautorefname{theorem}{Theorems}

We now relate the complexes for various parameters. Let $S$ be a ring and $S_1$ and $S_2$ be two specializations such that the associated parameter complexes $\llbracket{{}_-}\rrbracket_{S_1}$ and $\llbracket{{}_-}\rrbracket_{S_2}$ are defined. By \fullref{proposition:its-an-invariant!} these are tangle invariants.
We call 
these two link invariants \textit{related over $S$}, and write $\llbracket{{}_-}\rrbracket_{S_1}\approx_S\llbracket{{}_-}\rrbracket_{S_2}$, if we have, after scalar extension to $S$, a commuting diagram:
\begin{gather}\label{eq:funny-diagram}
\begin{aligned}
\xymatrix@R4mm{
& \Komh(\Modpgr{\webalg_{\based}[S_1]})\ar[dd]^{\boldsymbol{\Isonew{}{}}}_{\cong}\\
\Tan \ar[ru]|/-.5cm/{\,\llbracket{{}_-}\rrbracket_{S_1}\,}\ar[rd]|/-.5cm/{\,\llbracket{{}_-}\rrbracket_{S_2}\,}&\\
& \Komh(\Modpgr{\webalg_{\based}[S_2]}).
}
\end{aligned}
\end{gather}

\begin{proposition}\label{proposition:all-the-same}
Denote by $\HKBN{{{}_-}}$ the functor obtained via specializing 
$\specS(\parto)=\parto$, $\specS(\parsign)=1$ and $\specS(\paro)=1$ (and scalar extension to $\ring$). Let 
\begin{gather}\label{eq:funny-functor-2}
\boldsymbol{\Isonew{}{}}
\colon
\Komh(\Modpgr{\webalg_{\based}[\KBN]})\stackrel{\cong}{\longrightarrow}
\Komh(\Modpgr{\webalg_{\based}[\parameterS]})
\end{gather}
be the equivalence obtained by combining 
\fullref{theorem:matchalgebras} and \ref{theorem:matchalgebrasnew}. Then 
$\HKBN{{{}_-}}\approx_\ring\llbracket{{}_-}\rrbracket_{\ring}$.
\end{proposition}

\begin{proof}
Let ${}_{\vec{s}}T_{\vec{t}}$ be a tangle. 
We observe that the entries of the two 
complexes $\boldsymbol{\Isonew{}{}}(\HKBN{{}_{\vec{s}}T_{\vec{t}}})$
and $\HGen{{}_{\vec{s}}T_{\vec{t}}}$
can by identified via 
\fullref{theorem:matchalgebras} and \ref{theorem:matchalgebrasnew}. Moreover, the differentials are given by ``multiplication foams'' 
$f\colon\M(u)\to\M(v)$ 
or ``reversed multiplication foams'' $g\colon\M(v)\to\M(u)$
and match up to some small twists.
To be more precise, the differentials restricted to direct summands are related by
\begin{gather}\label{eq:funny-functor-4}
\begin{gathered}
f\colon\M[\KBN](u)\to\M[\KBN](v)
\overset{\boldsymbol{\Isonew{}{}}}{\longmapsto}
\coeff_{v}\circ f\circ\coeff_{u}^{-1}\colon\M[\parameterS](u)\to\M[\parameterS](v),
\\
g\colon\M[\KBN](v)\to\M[\KBN](u)
\overset{\boldsymbol{\Isonew{}{}}}{\longmapsto}
\coeff_{u}\circ g\circ\coeff_{v}^{-1}\colon\M[\parameterS](v)\to\M[\parameterS](u).
\end{gathered}
\end{gather}
Here we use the notation from \fullref{sec:iso}, but now 
for the web algebras side instead of the arc algebra used therein.
In other words, $\boldsymbol{\Isonew{}{}}(f)=f_{\parameterS}$ and $g_{\parameterS}=\coeffb_{v}\circ\boldsymbol{\Isonew{}{}}(g)\circ\coeffb^{-1}_{u}$, as follows by \fullref{proposition:crazy-module-maps} and \fullref{proposition:crazy-module-maps-yet-again}, respectively.

Thus, the differentials of two complexes $\boldsymbol{\Isonew{}{}}(\HKBN{{}_{\vec{s}}T_{\vec{t}}})$
and $\HGen{{}_{\vec{s}}T_{\vec{t}}}$ differ only by ``units'', and we can use unit sprinkling (see \cite[Lemma 4.5]{CMW}) to get a 
chain isomorphism between them. Note that the maps used in this chain isomorphism are entrywise 
$\webalg[\parameterS]$-bimodule homomorphisms by 
\fullref{lemma:signs-are-constant-next}.
\end{proof}

\makeautorefname{theorem}{Theorem}

\begin{theorem}\label{theorem:all-the-same}
Let $S=S_1=\ring$ and consider a specialization $\specS\colon\ring\to S_2$ with $\specS(\parto)=\parto$. Then 
$\llbracket{{}_-}\rrbracket_{S_1}\approx_S\llbracket{{}_-}\rrbracket_{S_2}$.
\end{theorem}

\begin{proof}
Exactly as in the proof of 
\fullref{proposition:all-the-same}, since
we have not used 
the specific form of the parameters in question.
\end{proof}

The following is a consequence of \fullref{theorem:all-the-same}:

\begin{corollary}\label{corollary:all-the-same}
We have (with the last $\approx$ only for $R=\C$ and $\specS(\parto)=0$)
\begin{gather}\label{eq:funny-functor-12}
\HKBNboth{{}_-}
\,
\approx_{\Z[\parto,i]}
\,
\HCaboth{{}_-}
\,
\approx_{\Z[\parto,i]}
\,
\HCMWboth{{}_-}
\,
\approx_{\Z[\parto,i]}
\,
\HBlboth{{}_-}
\,
\approx_{\C,\specS(\parto)=0}
\,
\HOboth{{}_-},
\end{gather}
where the rightmost invariant is the one obtained via category $\mathcal{O}$.
\end{corollary}

This was known for links, but, to the 
best of our knowledge, not for tangles (except for $\HKBNboth{{}_-}
\,
\approx_{\C,\specS(\parto)=0}
\,
\HOboth{{}_-}$, see \cite{Str} together with \cite{BS3}).
\section{Functoriality}\label{subsec:gl-and-sl-funct}
Let us denote by $\twoTan$ 
the $2$\textit{-category of tangle (diagrams)}. This is the $2$-category 
whose underlying $1$-category is $\Tan$, but without relations on tangles (tangle diagram which are equal in $\Tan$ are isomorphic in $\twoTan$), and whose 
$2$-morphisms are certain cobordisms called $2$\textit{-tangles}. 
There is a generator-relation 
description of $\twoTan$, with generators given by 
\textit{Morse and Reidemeister cobordisms} and 
relations given by \textit{movie moves}, see \cite[Section 8]{BN1} 
or \cite[Chapter 1]{CS} for more details.
We will recall the specific facts needed for the proof of 
\fullref{theorem:functor-gl2} in \fullref{subsec:proof-six}.  
We also consider the $2$-subcategory $\twoTanup$ of $\twoTan$ defined in the expected way, with the underlying 
$1$-morphisms being (upwards oriented) braids and $2$-morphisms given by braid-like cobordisms, 
\cite[Section 3.4]{CS}.
\subsection{Functoriality (\texorpdfstring{$\sltwo$}{sl2} versus \texorpdfstring{$\gltwo$}{gl2})}
In light of 
\fullref{proposition:its-an-invariant!}, it makes sense to ask, 
whether there is a $2$-functor
\begin{gather}\label{eq:funny-functor-21}
\Hgenpm{{{}_-}}\colon\twoTan\to\twoKomh(\webcatcircpm).
\end{gather}
Here $\twoKomh(\webcatcircpm)$ is the $2$-category where we take $\Komh(\webcatcircpm)$ on the level of $1$- and $2$-morphisms and take additionally (finite sequences of) boundary points as objects. The existence of 
such a $2$-functor is called \textit{functoriality}.

Let us now consider two families of specializations,
called the \textit{$\sltwo$} respectively \textit{$\gltwo$ specialization}, together with the corresponding  tangle invariants $\Hgenslpm{{{}_-}}$ and $\Hgenglpm{{{}_-}}$.
We define these specializations $\field\to\Z[\parto,\parha,\paro^{\pm 1}]$ by specifying the images of the $6$ parameters as a $6$-tuple:
\begin{gather}\label{eq:slgl-parameters}
\specs_{\sltwo}(\parameter)=
(\parto,\parha,1,\parsign\paro^2=+1,\paro,\parsign\paro)
\quad\text{and}\quad
\specs_{\gltwo}(\parameter)=
(\parto,\parha,1,\parsign\paro^2=-1,\paro,\parsign\paro)
.
\end{gather}
These are $3$-parameter specializations which differ by
the sign in the fourth entry.

\begin{theorem}\label{theorem:functor-gl2}
Functoriality holds for $\gltwo$, i.e. the $\gltwo$ specialization gives rise to 
a $2$-functor
\begin{gather}\label{eq:funny-functor-22}
\Hgenglpm{{{}_-}}\colon\twoTan\to\twoKomh(\webcatcircglpm),
\end{gather}
Restriction to the subcategory $\twoTanup$ defines a $2$-functor
\begin{gather}\label{eq:funny-functor-32}
\Hgengl{{{}_-}}\colon\twoTanup\to\twoKomh(\webcatcircgl).
\end{gather}
For $\sltwo$, similar assignments do not provide a well-defined extension  of $\Hgenslpm{{{}_-}}$.
\end{theorem}

The proof is 
surprisingly easy---using the ideas in \cite{ETWe}---and 
is given in \fullref{subsec:proof-six}.
\subsection{A fix of non-functoriality of Khovanov homology}
We like to emphasize that \fullref{theorem:functor-gl2} gives a way to fix functoriality of Khovanov homology 
without changing the framework of $\KBN$, or 
any other specialization of the 
$\sltwo$ family. 
Namely, use any of the functorial $\gltwo$ invariants from \fullref{theorem:functor-gl2} 
and ``pull it over'' via $\boldsymbol{\Isonew{}{}}$ from \eqref{eq:funny-diagram}. 
To be more precise, one uses the coefficient maps 
from \fullref{definition:the-coefficient-map2} 
on the chain groups (bimodules) to get a different, scalar adjusted, 
cup foam basis. Then one can rearrange the differentials 
(web bimodule homomorphisms).
The resulting complex is functorial by \fullref{theorem:functor-gl2}.
The resulting $2$-functors $\Hgenslo{{{}_-}}$, which we define now, might be called \textit{the functorial versions of $\sltwo$ homology}. 

Similarly as in \eqref{eq:funny-diagram}, we call two link invariants \textit{$2$-related over $S$}, and write $\llbracket{{}_-}\rrbracket_{S_1}\approx_S^2\llbracket{{}_-}\rrbracket_{S_2}$ if we have a commuting diagram
\begin{gather}\label{eq:funny-diagram-two}
\begin{aligned}
\xymatrix@R4mm{
& \twoKomh(\Modpgr{\webalg_{\based}^{\pm}[S_1]})\ar[dd]^{\boldsymbol{\Isonew{}{}}}_{\cong}\\
\twoTan \ar[ru]|/-.5cm/{\,\llbracket{{}_-}\rrbracket_{S_1}\,}\ar[rd]|/-.5cm/{\,\llbracket{{}_-}\rrbracket_{S_2}\,}&\\
& \twoKomh(\Modpgr{\webalg_{\based}^{\pm}[S_2]}).
}
\end{aligned}
\end{gather}

\begin{definition}\label{definition:sl2funct}
A link homology such that $\Hgenslo{{{}_-}}\approx_{\Z[\parto,\parha,\paro^{\pm 1}]}^2\Hgengl{{{}_-}}$ is called a \textit{functorial $\sltwo$ homology}.
\end{definition}

\begin{corollary}\label{corollary:sl2funct}
The link homology $\Hgenslo{{{}_-}}$ is functorial.
\end{corollary}

This construction should not be confused with the (non-functorial) complex $\Hgensl{{{}_-}}$. An example of a functorial $\sltwo$ homology is the classical Khovanov homology, but with
scalar adjusted chain maps.
In contrast to the classical Khovanov homology the merge and split morphisms in the functorial Khovanov homology get signs depending on the underlying shape, see \eqref{eq:mult1}--\eqref{eq:mult4} for examples. Note that this functorial Khovanov homology does not change the underlying combinatorial framework of the classical Khovanov homology, which is in contrast to the fixes  $\Ca$, $\CMW$ and $\Bl$ of non-functoriality.
\section{Main proofs}\label{sec:mainproofs}
In this final section we give the more involved proofs 
of our main statements. 
\subsection{Proof of \texorpdfstring{\fullref{theorem:matchalgebrasold}}{Theorem \ref{theorem:matchalgebrasold}}}\label{subsec:proof-one}
Our proof follows \cite[proof of Theorem 4.18]{EST}. 
That is, we show that 
each step in the multiplication 
procedure of the web algebra locally agrees 
with the one from the multiplication of the arc algebra. 
Here we use \fullref{lemma:matchvs2}, i.e. 
it suffices to show that they agree on the cup foam basis on the side of 
$\webalg[\parameterS]^{\based}$.
That is, throughout the whole proof, we 
first check the multiplication steps for $\webalg[\parameterS]^{\based}$ 
where some rewriting has to be done, and then for 
$\Arcalg$ where we can read off the multiplication directly.

Now, there are four
topologically different situations to check:
\begin{enumerate}
\renewcommand{\theenumi}{(i)}
\renewcommand{\labelenumi}{\theenumi}

\item \label{enum:nnm} \textbf{Non-nested merge.} Two non-nested circles are replaced by one circle.

\renewcommand{\theenumi}{(ii)}
\renewcommand{\labelenumi}{\theenumi}

\item \label{enum:nm} \textbf{Nested merge.} Two nested circles are replaced by one circle.

\renewcommand{\theenumi}{(iii)}
\renewcommand{\labelenumi}{\theenumi}

\item \label{enum:nns} \textbf{Non-nested split.} One circle is replaced by two non-nested circles.

\renewcommand{\theenumi}{(iv)}
\renewcommand{\labelenumi}{\theenumi}

\item \label{enum:ns} \textbf{Nested split.} One circle is replaced by two nested circles.
\end{enumerate}
As in \cite[proof of Theorem 4.18]{EST}, 
we will go through the following cases:

\begin{enumerate}[label=(\Alph*)]

\renewcommand{\theenumi}{(A)}
\renewcommand{\labelenumi}{\theenumi}

\item \label{enum:basics} \textbf{Basic shape.} The involved 
components are as small as possible with 
the minimal number of phantom edges.

\renewcommand{\theenumi}{(B)}
\renewcommand{\labelenumi}{\theenumi}

\item \label{enum:mimimalsad} \textbf{Minimal saddle.} While the 
components themselves are allowed to be 
of any shape, the involved saddle has 
only a single phantom facet.

\renewcommand{\theenumi}{(C)}
\renewcommand{\labelenumi}{\theenumi}

\item \label{enum:gens} \textbf{General case.} Both, the 
shape as well as the saddle, are arbitrary.
\end{enumerate}

We start with \ref{enum:basics}, 
where---including a horizontal flip of the \ref{enum:nm}
case---we have the following pairs of (topological and algebraic) basic shapes corresponding to the cases $(i)$-$(iv)$:
\begin{gather}\label{eq:proofs-part-1}
\underbrace{\!\xy
(0,0)*{\includegraphics[scale=.65]{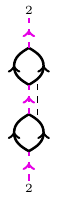}};
\endxy
\leftrightsquigarrow
\xy
(0,0)*{\includegraphics[scale=.65]{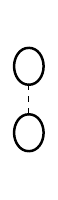}};
\endxy\!}_{\ref{enum:nnm}}
\;,\;
\underbrace{\!\xy
(0,0)*{\includegraphics[scale=.65]{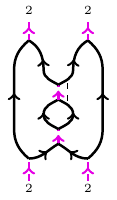}};
\endxy
\leftrightsquigarrow
\xy
(0,0)*{\includegraphics[scale=.65]{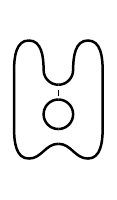}};
\endxy\!}_{\ref{enum:nm}}
\;,\;
\underbrace{\!\xy
(0,0)*{\includegraphics[scale=.65]{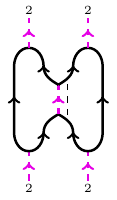}};
\endxy
\leftrightsquigarrow
\xy
(0,0)*{\includegraphics[scale=.65]{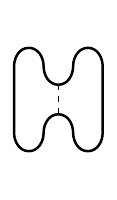}};
\endxy\!}_{\ref{enum:nns}}
\;,\;
\underbrace{\!\xy
(0,0)*{\includegraphics[scale=.65]{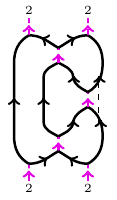}};
\endxy
\leftrightsquigarrow
\xy
(0,0)*{\includegraphics[scale=.65]{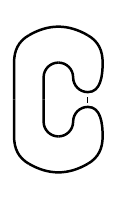}};
\endxy\!}_{\ref{enum:ns}}
\end{gather}
 The two rightmost situations are called
\textit{H-shape} and \textit{C-shape} respectively. (Recall that, by our convention, 
the $\reflectbox{\text{C}}$-shape 
does not occur.)

\noindent\textbf{Non-nested merge - basic shape.}
This case works almost exactly in the same way as in \cite[proof of Theorem 4.18]{EST}. 
That is, multiplication 
of basis cup foams yields topologically basis cup foams again, 
except in case where we start with two dotted basis cup foams. 
But in this case we can use \eqref{eq:theusualrelations1b} to create 
a basis cup foam without dots and a factor $\parto$.
The same happens for $\Arcalg$, see \eqref{eq:mult1}.

\noindent\textbf{Nested merge - basic shape.}
In this case something has to be done on the side of 
$\webalg[\parameter]^{\based}$. 
In fact, this is the most complicated case and we go through 
all details and will be shorter in the other 
cases afterward.
The multiplication step here is given by 
\begin{gather}\label{eq:proofs-part-2}
\xy
\xymatrix@C+=1.3cm@L+=6pt{
\xy
(0,0)*{\includegraphics[scale=.65]{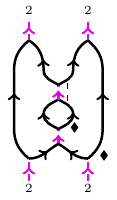}};
\endxy
\ar[rr]|{\;
\xy
(0,0)*{\includegraphics[scale=.65]{figs/fig2-91.pdf}};
\endxy
\;} & &  
\xy
(0,0)*{\includegraphics[scale=.65]{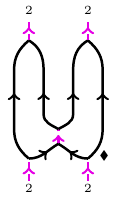}};
\endxy
\ar[rr]|{\;
\xy
(0,0)*{\includegraphics[scale=.65]{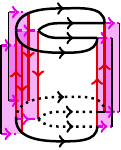}};
\endxy
\;}
& &  
\xy
(0,0)*{\includegraphics[scale=.65]{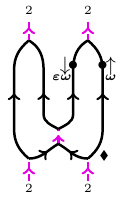}};
\endxy
}
\endxy
\end{gather}
The right foam above
is shown to illustrate the cylinder we cut 
together with the dots, their 
positions---with $\downarrow$ and $\uparrow$ meaning the dot sits on a facet touching 
the corresponding edge under or over
the part where we cut the cylinder---and factors created 
in this cutting procedure.
Now, if a basis cup foam 
is sitting underneath the leftmost picture, 
then the multiplication result is 
topologically not a basis cup foam. 
Thus, we need to turn it into 
a basis cup foam.
In order to do so, we apply \eqref{eq:neckcut} 
to the cylinder 
illustrated above. Here we have to use \eqref{eq:squeezing} 
first, which gives an overall 
factor $\parsign$. (We squeeze the left part of the cylinder.) 
Cutting the cylinder gives a sum of two foams, one with a dot on the top 
and one with a dot on the bottom. The one with a dot on the bottom 
will be of importance and it comes with a factor $\parsign\paro$, 
as illustrated in \eqref{eq:proofs-part-2}.
After neck cutting the 
cylinder we create a ``bubble'', recalling that a basis cup foam is sitting underneath, 
with two internal phantom 
facets in the bottom part of the picture. 
By \eqref{eq:neckcutphantom}, we can remove the 
phantom facets (we remove the left 
phantom facet), we pick up a factor $\parsign$ and create an ``honest'' bubble 
instead. Thus, by \eqref{eq:bubble1}, only the term 
in \eqref{eq:neckcut} with the dot on the bottom 
survives. By \eqref{eq:bubble1} 
the remaining bubble evaluates to $\parsign\paro^{-1}$. Hence, we get in total 
a basis cup foam without dots and a factor
$\parsign\cdot\parsign\paro\cdot\parsign\cdot\parsign\paro^{-1}
=\parsign^4=1$. 
This is the same as for $\Arcalg$, cf. \eqref{eq:mult2}. 
If we start with a dotted basis cup foam, then we can move the dot 
topologically aside and proceed as above. In particular, we 
pick up the same coefficients. 
After the topological rearrangement, we have to move the 
dot to the rightmost facets to produce a basis 
cup foam again. Thus,
using dot migration \eqref{eq:dotmigration}, 
we get the same result as in \eqref{eq:mult2}, 
because the dot moving sign from \eqref{eq:thesigns} 
reflects the dot migration \eqref{eq:dotmigration}. 
More precisely, the dot migration 
gives a factor $\parsign$ 
which is as in \eqref{eq:thesigns}, 
since $\length_\Lambda(\gamma_i^{\rm dot})=1$. 
(We have to move the dot across one cap of length $1$.)  This establishes the situation displayed in \eqref{eq:proofs-part-1} (ii). The same works word-by-word in the 
horizontally flipped cases as well.

\noindent\textbf{Non-nested split - basic shape.} 
The multiplication step is 
\begin{gather}\label{eq:proofs-part-3}
\xy
\xymatrix@C+=1.3cm@L+=6pt{
\xy
(0,0)*{\includegraphics[scale=.65]{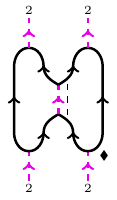}};
\endxy
\ar[rr]|{\;
\xy
(0,0)*{\includegraphics[scale=.65]{figs/fig2-91.pdf}};
\endxy
\;}
& &
\xy
(0,0)*{\includegraphics[scale=.65]{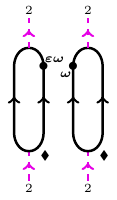}};
\endxy
}
\endxy
\end{gather}
We have again illustrated the dots which are created while 
topologically
rearranging the resulting foam.
Assuming that a basis cup foam without dots is sitting underneath the 
leftmost picture, we see that we almost get 
a basis cup foam after stacking the saddle on top: we get two 
cup foams sitting underneath the left and right circle which 
touch each other in the middle in a closed singular seam, and a 
corresponding phantom facet. Thus, by using the 
singular seam removal
\eqref{eq:closedseam}---creating dots as illustrated above---and 
dot migration \eqref{eq:dotmigration}, we get 
two basis cup foams, one with a dot on the rightmost facets of the left circle 
and one with a dot on the rightmost facet of the right circle. 
The singular seam removal gives a factor $\parsign\paro$ for the first 
and a factor $\paro$ for the second basic cup foam. Additionally, the second gets 
a factor $\parsign$ coming from the dot migration. 
Recalling $\parsign=\pm 1$, this matches the side of $\Arcalg$ which was computed 
in the first line of  \eqref{eq:mult3}.
On the other hand, if a basis cup foam with a dot on the rightmost facet 
is sitting underneath 
the leftmost picture, we can move the dot topologically aside, 
proceed as above and create, after using the 
singular seam removal \eqref{eq:closedseam} and 
dot migration \eqref{eq:dotmigration}, two basis cup foams. 
Remembering that we started with a dot, we see that 
these two are now a basis cup foam with one dot on the rightmost facets 
of the two circles and a foam that is topological a basis 
cup foam, but with two dots on the rightmost facet 
of the right circles. Thus, using \eqref{eq:theusualrelations1b},  we get an extra factor $\alpha$ exactly as 
for 
$\Arcalg$, see \eqref{eq:mult3}.

\noindent\textbf{Nested split - basic shape.} 
The multiplication foam is now 
(indicating again the cylinder we want to cut and the dots we create via cutting)
\begin{gather}\label{eq:proofs-part-4}
\begin{xy}
\xymatrix@C+=1.3cm@L+=6pt{
\xy
(0,0)*{\includegraphics[scale=.65]{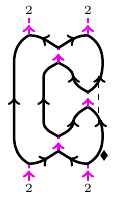}};
\endxy
\ar[rr]|{\;
	\xy
	(0,0)*{\includegraphics[scale=.65]{figs/fig2-91.pdf}};
	\endxy
	\;}& &
\xy
(0,0)*{\includegraphics[scale=.65]{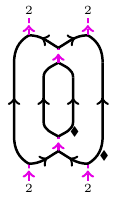}};
\endxy
\ar[rr]|{\;
	\xy
	(0,0)*{\includegraphics[scale=.65]{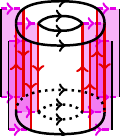}};
	\endxy
	\;}& &
\xy
(0,0)*{\includegraphics[scale=.65]{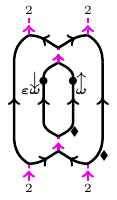}};
\endxy
}
\end{xy}
\end{gather}
We again apply neck cutting; this time to 
the internal cylinder in the second foam between the middle web and the rightmost 
web connecting the two nested circles that we 
can cut using \eqref{eq:neckcut}. 
First assume that the original basis cup foam sitting underneath has no dots.
After neck cutting we get a sum of two basis cup foams, 
so nothing needs to be done topologically. One has a factor $\paro$ 
and a dot sitting on the rightmost facet of the nested circle, 
the other has a factor $\parsign\paro$ and a dot sitting 
on the next to leftmost facets of the 
outer circle, as illustrated above. Moving this dot across 
two phantom facets to the rightmost facets 
picks up, by dot migration \eqref{eq:dotmigration}, a factor 
$\parsign^2=1$ (note that the dot is sitting underneath the place where 
we applied neck cutting and hence, is on a foam 
with a generic slice as in the leftmost picture above). 
We end up with the same as for $\Arcalg$, see \eqref{eq:mult4}.
Similarly, starting with a basis cup foam sitting underneath having a dot on the rightmost 
facet, we can move the dot topologically aside and proceed as before. 
As in the non-nested split case, an additional factor $\alpha$ appears for one of the two basis cup foams via  \eqref{eq:theusualrelations1b}, consistent with \eqref{eq:mult4}.

The case \ref{enum:mimimalsad} can be proven by 
copying the arguments from \cite[Proof of Theorem 4.18]{EST}. 
In particular, non-interfering foam parts 
can be topologically moved away and do not matter in the rewriting process. 
The only thing that changes is that the 
dot moving signs, the topological sign and the saddle 
sign from \eqref{eq:thesigns} are now powers 
of $\parsign$ instead of powers of $-1$. The remaining case is now as follows.

\noindent\textbf{General shape, but minimal saddle.}
The dot moving signs are precisely the same on both sides 
(recalling that moving across phantom facets always gives $\parsign$). 
Furthermore, we can always move existing dots topologically aside 
and we do not have to worry about them until the very end where we 
possible apply \eqref{eq:theusualrelations1} to remove two of them. 
In particular, if we understand the undotted case, then the dotted follows.
So let us consider only basis cup foams without dots.
In case of the non-nested merge, the resulting foams 
are topologically basis cup foams and we are done. 
In case of the nested merge we have to 
topologically manipulate the result until it is a basis cup foam again. 
This can be done as in \cite[Proof of Theorem 4.18]{EST} 
with the difference that the formula \cite[(43)]{EST} gives
\begin{gather}\label{eq:proofs-part-5}
\parsign^{\neatfrac{1}{4}(\length_\Lambda(C_{\rm in})-2)}\cdot\parsign^{1}
\quad\text{instead of}\quad
(-1)^{\neatfrac{1}{4}(\length_\Lambda(C_{\rm in})-2)}\cdot(-1)^{1}.
\end{gather}
This matches the side of $\Arcalg$. 
For the case of the non-nested split we can proceed as above 
and we get the same factors which matches the case of $\Arcalg$. 
Last, for the nested split we copy the argument 
in \cite[Proof of Theorem 4.18]{EST}, but picking up 
\begin{gather}\label{eq:proofs-part-6}
\parsign^{\neatfrac{1}{4}(\length_\Lambda(C_{\rm in})-2)}
\quad\text{instead of}\quad
(-1)^{\neatfrac{1}{4}(\length_\Lambda(C_{\rm in})-2)}.
\end{gather}
Again, this is as in case $\Arcalg$.

\noindent\textbf{General shape.}
The non-nested merge works as above, since the behavior  is independent of the ``size'' of the saddle.
Incorporating a general saddle in the cases of a nested merge 
is as in \cite[Proof of Theorem 4.18]{EST} 
but with
\begin{gather}\label{eq:proofs-part-7}
\parsign^{s_\Lambda(\gamma)}
\quad\text{instead of}\quad
(-1)^{s_\Lambda(\gamma)}.
\end{gather}
The non-nested split case can be done 
as above for the basic shapes. In the case $s_\Lambda(\gamma)=2$ the two 
cup foams touch each other now locally as
\begin{gather}\label{eq:proofs-part-8}
\xy
(0,0)*{\includegraphics[scale=.65]{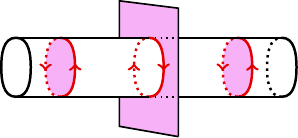}};
\endxy
\xrightarrow[\eqref{eq:neckcutphantom},\eqref{eq:dotmigration}]{\eqref{eq:closedseam}}
\parsign^{2}
\left(
\paro\,
\xy
(0,0)*{\includegraphics[scale=.65]{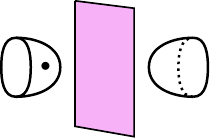}};
\endxy
+\parsign\paro\,
\xy
(0,0)*{\includegraphics[scale=.65]{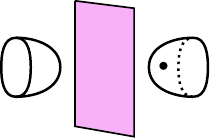}};
\endxy
\right)
\end{gather}
The general case looks similar.
Using \eqref{eq:closedseam} 
once followed by $s_\Lambda(\gamma)-1$ applications
of \eqref{eq:neckcutphantom} 
(as well as $2(s_\Lambda(\gamma)-1)$ applications 
of \eqref{eq:dotmigration} which do not contribute because 
$\parsign=\pm 1$)
gives the above,
where again $s_\Lambda(\gamma)=2$---the left $\parsign$
has an exponent $s_\Lambda(\gamma)$ in general. These scalars are the same as for $\Arcalg$, which we can read off in \fullref{subsec:arc-algebras}.

The case of a nested split does not depend on the saddle and can be done as above 
in case of the minimal saddle.
This finishes the proof of \fullref{theorem:matchalgebrasold}.
\subsection{Proof of \texorpdfstring{\fullref{proposition:crazy-isomorphism}}{Proposition  \ref{proposition:crazy-isomorphism}}}\label{subsec:proof-two}
\subsubsection{The main idea and claim}
First note that the maps from \eqref{eq:the-crazy-map} 
are $\ring$-linear and homogeneous.
It suffices to show the isomorphism 
for some fixed, but arbitrary, $\Lambda\in\bblock$.

The main idea of the 
proof is to show that the maps 
$\coeff_{\stacked}$ successively intertwine the 
two multiplication rules for $\ArcalgS[\KBN]_{\Lambda}$ and 
$\Arcalg_{\Lambda}$. Consequently, we compare two 
intermediate multiplications 
steps in the following fashion:
\begin{gather}\label{eq:proofs-part-9}
\begin{gathered}
\begin{xy}
\xymatrix{
\stacked_{l} 
\ar[rr]^{\boldsymbol{\rm mult}^{\KBN}_{\stacked_{l},\stacked_{l+1}}} 
\ar[d]_{\coeff_{\stacked_{l}}} & &  \stacked_{l+1} \ar[d]^{\coeff_{\stacked_{l+1}}}  \\
\stacked_{l} 
\ar[rr]_{\boldsymbol{\rm mult}^{\parameterS}_{\stacked_{l},\stacked_{l+1}}}   
&     &   \stacked_{l+1},  
}
\end{xy}
\end{gathered}
\end{gather}
with the notation as for the arc algebra multiplication.
The goal is to show that each such diagram, i.e. 
for each $\stacked_{l}$ and $\stacked_{l+1}$, commutes. Since the 
multiplication in $\ArcalgS[\KBN]_{\Lambda}$ has always 
trivial coefficients---up to a factor $\parto$---and
\begin{gather}\label{eq:proofs-part-10}
\boldsymbol{\rm mult}^{\parameterS}_{\stacked_{l},\stacked_{l+1}}(\stacked_l^{\rm or})
=\coeff(\parameterS)\cdot\stackedd_{l+1}^{\rm or}+\text{lin. comb. of other diagrams},
\end{gather}
where $\coeff(\parameterS)$ is the coefficient coming from 
$\Arcalg_{\Lambda}$, this amounts to prove
\begin{gather}\label{eq:show-me-please}
\coeff_{\stacked_{l}}(\stacked_l^{\rm or})\cdot\coeff(\parameterS)
=\coeff_{\stacked_{l+1}}(\stackedd_{l+1}^{\rm or})
\end{gather}
holds up to a factor $\parto$ which always appears on neither side or 
on both sides of \eqref{eq:show-me-please}.


\begin{example}\label{example:to-test}
In \fullref{example:needed!} we calculated 
$\coeff(C_{\mathrm{out}})=\paro^{-1}$, 
$\coeff(C_{\mathrm{in}})=\paro^{-1}$  
and $\coeff(C)=\parsign\paro^{-2}$ for the three circles 
appearing in the diagram on the right-hand side of \eqref{eq:mult2}. 
Moreover, $\coeff(\parameterS)=\parsign$. Thus, \eqref{eq:show-me-please} 
holds.
\end{example}
\subsubsection{Combinatorial preparations for the proof of Claim \eqref{eq:show-me-please}}
We fix $\Lambda\in\bblock$.

\begin{lemma}\label{lemma:compareexteriorinterior}
For a circle $C\in\stacked$ it holds that
\begin{gather}\label{eq:both-sides}
\#\left(\icup(C)\right) + 1 = \#\left(\ecap(C)\right) \quad\text{and}\quad
\#\left(\icap(C)\right) + 1 = \#\left(\ecup(C)\right).
\end{gather}
\end{lemma}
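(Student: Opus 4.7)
The plan is to prove both identities simultaneously by encoding the topology of the disk bounded by $C$ into a Reeb tree and extracting the counts from elementary tree statistics.

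Since $C$ is a simple closed curve, the Jordan curve theorem produces a bounded topological disk $D$ with $\partial D = C$. The height function $y\colon D\to\mathbb{R}$ has constant gradient and therefore no interior critical points, so every critical point of $y|_D$ lies on $C$ and is either a cup or a cap. I would then form the Reeb graph $\Gamma$ of $y|_D$, obtained by collapsing each connected component of each horizontal level set to a point. Since $D$ is simply connected, $\Gamma$ is a tree. The valence of each vertex of $\Gamma$ is determined by the type of the corresponding cup/cap: at an $\ecup$ a new level-set component is born, giving a leaf (valence $1$); dually, an $\ecap$ kills a component (valence $1$); at an $\icup$ a single component splits into two, producing a valence-$3$ branching; and at an $\icap$ two components merge into one (valence $3$). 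Writing $a=\#\ecup(C)$, $b=\#\ecap(C)$, $c=\#\icup(C)$, $d=\#\icap(C)$, the tree $\Gamma$ therefore has $a+b$ leaves, $c+d$ trivalent vertices, and $V=a+b+c+d$ in total.

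The remainder is linear algebra on tree statistics. Substituting $E=V-1=a+b+c+d-1$ into the handshake identity $a+b+3(c+d)=2E$ and simplifying yields
\[ a+b \;=\; c+d+2. \]
The complementary identity $a+c=b+d$, which reflects the fact that cups and caps alternate around any simple closed curve and so occur in equal numbers, rewrites as $a-b=d-c$. Adding and subtracting these two linear equations gives $a=d+1$ and $b=c+1$, which are precisely the two identities in \eqref{eq:both-sides}.

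The one step that needs care is the Reeb-vertex classification: that $\icup$ and $\icap$ genuinely correspond to saddle-type (valence $3$) events while $\ecup$ and $\ecap$ correspond to births and deaths of level-set components. This follows directly from the definitions---the dichotomy ``exterior above the cup'' versus ``interior above the cup'' is exactly the dichotomy ``an existing level-set component gets split'' versus ``a new component is born''---together with the standard local geometry of cups and caps in a stacked diagram. Everything after that is mechanical.
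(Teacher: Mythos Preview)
Your argument is correct and takes a genuinely different route from the paper's. The paper proceeds by a direct induction: it checks the base case of a single cup and cap, and then observes that any circle is obtained from this by inserting ``zig-zags'' (a cup--cap pair attached to a strand), each of which increases both sides of each identity by $0$ or $1$ simultaneously. Your approach is instead global and Morse-theoretic: you read off both identities at once from the Euler characteristic of the Reeb tree of the height function on the bounded disk, combined with the alternation of cups and caps around $C$.

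Each approach has its virtues. The paper's induction is entirely elementary and stays within the combinatorics of arc diagrams; there is nothing to set up, and it generalizes painlessly to the bimodule setting where the shift moves $\eright,\eleft,\iright,\ileft$ enter (cf.\ \fullref{lemma:comparesaddles-bimodules}). Your argument is more conceptual---it explains \emph{why} the $+1$ appears (it is the Euler characteristic of the disk)---and would adapt immediately to more general planar regions. The only point where a referee might ask for a word is the tacit genericity assumption that distinct cups and caps of $C$ sit at distinct heights, so that the Reeb graph is honestly trivalent; in a stacked diagram several cups can share a level, but a small vertical perturbation of $C$ fixes this without changing any of the counts.
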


\begin{proof}
This is clear for a circle 
containing only a single cup and cap. 
Any other circle can be constructed from 
such a small circle by successively adding ``zigzags'':
\begin{gather}\label{eq:dull-lemma-1}
\xy
(0,0)*{\includegraphics[scale=.65]{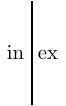}};
\endxy
\rightsquigarrow
\raisebox{0.075cm}{\reflectbox{
\xy
(0,0)*{\includegraphics[scale=.65]{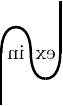}};
\endxy
}}
\;,\;
\xy
(0,0)*{\includegraphics[scale=.65]{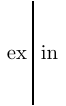}};
\endxy
\rightsquigarrow
\xy
(0,0)*{\includegraphics[scale=.65]{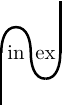}};
\endxy
\;,\;
\xy
(0,0)*{\includegraphics[scale=.65]{figs/fig4-33.pdf}};
\endxy
\rightsquigarrow
\xy
(0,0)*{\includegraphics[scale=.65]{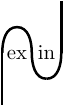}};
\endxy
\;,\;
\xy
(0,0)*{\includegraphics[scale=.65]{figs/fig4-35.pdf}};
\endxy
\rightsquigarrow
\raisebox{0.075cm}{\reflectbox{
\xy
(0,0)*{\includegraphics[scale=.65]{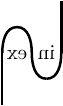}};
\endxy
}}
\end{gather} 
This increases both sides of the equalities from \eqref{eq:both-sides} 
by $0$ or $1$. The claim follows.
\end{proof}

\begin{lemma}\label{lemma:posandtag}
Let $C$ be any circle in a stacked diagram $\stacked$.
\begin{enumerate}[label=(\alph*)]
\item If $\gamma \in \icup(C)$, then 
$\pos_{\Lambda}(\gamma) \equiv \op{t}(C) \,{\rm mod}\, 2$.
\item If $\gamma \in \icap(C)$, then 
$\pos_{\Lambda}(\gamma) \equiv \op{t}(C) \,{\rm mod}\, 2$.
\item If $\gamma \in \ecup(C)$, then 
$\pos_{\Lambda}(\gamma) \equiv \op{t}(C) + 1\,{\rm mod}\, 2$.
\item If $\gamma \in \ecap(C)$, then 
$\pos_{\Lambda}(\gamma) \equiv \op{t}(C) + 1 \,{\rm mod}\, 2$.
\end{enumerate}
\end{lemma}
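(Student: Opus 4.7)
The plan is to prove all four statements simultaneously by induction on the number of arcs of $C$, using the zig-zag construction from the proof of \fullref{lemma:compareexteriorinterior}.

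First I would record the following basic parity identity: for any arc $\gamma$ one has $\length_\Lambda(\gamma) = 2s_\Lambda(\gamma) - 1$, which is odd. Hence the two endpoints of $\gamma$ always have opposite parities, so fixing the parity of $\pos_\Lambda(\gamma)$ (the leftmost endpoint) is equivalent to fixing the parity of the other endpoint of $\gamma$.

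For the base case, let $C$ be a minimal circle consisting of a single cup $\gamma_1$ and a single cap $\gamma_2$. The sets $\icup(C)$ and $\icap(C)$ are empty, so (a) and (b) hold vacuously. Moreover $\gamma_1 \in \ecup(C)$ and $\gamma_2 \in \ecap(C)$, and both have a common right endpoint at $\op{t}(C)$, giving
\[
\pos_\Lambda(\gamma_i) \;=\; \op{t}(C) - \length_\Lambda(\gamma_i) \;\equiv\; \op{t}(C) + 1 \pmod 2,
\]
which proves (c) and (d).

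For the inductive step, any circle $C$ with more than two arcs arises from a smaller circle $C'$ by replacing a single arc $\gamma'$ by a three-arc zig-zag, as pictured in the proof of \fullref{lemma:compareexteriorinterior}. Each zig-zag inserts two new vertices at adjacent positions on the horizontal line, so those positions have opposite parity. A direct inspection of the four zig-zag moves shows that the two ``new'' arcs produced in $C$ carry the opposite in/ex classification to $\gamma'$ (they poke out on the other side), whereas the surviving middle piece keeps the classification of $\gamma'$. Combining this with the induction hypothesis applied to $C'$ and with the adjacency of the two inserted positions yields (a)--(d) for $C$.

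The main obstacle will be the boundary case in which the zig-zag is inserted at or immediately adjacent to $\op{t}(C')$, so that the rightmost vertex of the circle changes when passing from $C'$ to $C$. Tracking how $\op{t}(C)$ relates to $\op{t}(C')$---shifting by $+1$, $+2$, or $+3$ depending on where the zig-zag is attached---is needed to confirm that the relevant parities remain correct. Since there are only finitely many such cases, each is a direct (if slightly tedious) verification, completing the induction.
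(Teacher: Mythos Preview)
Your approach—zig-zag induction from a minimal circle—is exactly the paper's. The paper sidesteps your ``main obstacle'' case analysis: rather than tracking how $\op{t}(C')$ shifts to $\op{t}(C)$, it verifies the congruence for each new arc against a \emph{local} right-turning point $\op{t}$ read off from the zig-zag picture, and then observes that $\op{t}\equiv\op{t}(C)\pmod 2$ holds automatically (right-endpoint and left-endpoint vertices alternate along $C$, and each arc has odd length by your first observation), so no boundary cases need separate treatment.
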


\begin{proof}
All four statements are clear for a 
circle $C^{\prime}$ with a single cup and cap. 
The circle $C$ is obtained by adding
successively ``zigzags'' to $C^{\prime}$. Adding such a 
zigzag somewhere gives the 
following
(we have illustrated where to read off 
$\pos_{\Lambda}(\gamma)$ and $\op{t}$)
\begin{gather}\label{eq:dull-lemma-2}
(a)\colon
\raisebox{0.075cm}{\reflectbox{
\xy
(0,0)*{\includegraphics[scale=.65]{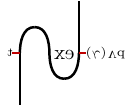}};
\endxy}
}
\quad,\quad
(b)\colon
\xy
(0,0)*{\includegraphics[scale=.65]{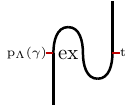}};
\endxy
\quad,\quad
(c)\colon
\xy
(0,0)*{\includegraphics[scale=.65]{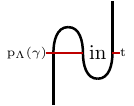}};
\endxy
\quad,\quad
(d)\colon
\raisebox{0.075cm}{\reflectbox{
\xy
(0,0)*{\includegraphics[scale=.65]{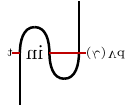}};
\endxy
}}
\end{gather}
Observe that $\op{t}$ might not be the 
rightmost point $\op{t}(C)$ on the circle $C$.
But since clearly $\op{t}\equiv \op{t}(C) \,\, {\rm mod} \,\, 2$, 
these do not change the parity and we are done.
\end{proof}

\begin{lemma}\label{lemma:comparesaddles}
We have
\begin{gather*}\label{eq:dull-lemma-3}
-
{\textstyle\sum_{\gamma \in \icups(C)}}\;
s_{\Lambda}(\gamma)
+
{\textstyle\sum_{\gamma \in \icaps(C)}}\;
(s_{\Lambda}(\gamma)-1)
=1-
{\textstyle\sum_{\gamma \in \ecups(C)}}\;
s_{\Lambda}(\gamma) +
{\textstyle\sum_{\gamma \in \ecaps(C)}}\;
(s_{\Lambda}(\gamma)-1)
\end{gather*}
for any circle $C$ in a stacked diagram $\stacked$.
\end{lemma}

\begin{proof}
By comparison of definitions, we get
\begin{gather}\label{eq:dull-lemma-4}
\begin{gathered}
{\textstyle\sum_{\gamma \in \icups(C)}}\;
s_{\Lambda}(\gamma) + 
{\textstyle\sum_{\gamma \in \ecaps(C)}}\; 
(s_{\Lambda}(\gamma) -1) = \neatfrac{1}{4}\left(\length_\Lambda(C)-2\right),
\\
{\textstyle\sum_{\gamma \in \icaps(C)}}\;
s_{\Lambda}(\gamma) + 
{\textstyle\sum_{\gamma \in \ecups(C)}}\; 
(s_{\Lambda}(\gamma) -1) = 
\neatfrac{1}{4}\left(\length_\Lambda(C)-2\right).
\end{gathered}
\end{gather}
Now we apply \fullref{lemma:compareexteriorinterior}.
\end{proof}

\begin{lemma}\label{lemma:comparecircleinversion}
Let $C\in\stacked$. Then
\begin{gather*}\label{eq:dull-lemma-5}
\begin{gathered}
{\textstyle\sum_{\gamma \in \icups(C_{\rm in})}}\; 
(s_{\Lambda}(\gamma)+1)\pos_{\Lambda}(\gamma) + 
{\textstyle\sum_{\gamma \in \icaps(C_{\rm in})}}\; 
s_{\Lambda}(\gamma)(\pos_{\Lambda}(\gamma)+1) + \op{t}(C) 
+ \neatfrac{1}{4}\left(\length_\Lambda(C)-2\right)
\\
\equiv 
{\textstyle\sum_{\gamma \in \ecups(C_{\rm in})}}\; 
(s_{\Lambda}(\gamma)+1)\pos_{\Lambda}(\gamma) +
{\textstyle\sum_{\gamma \in \ecaps(C_{\rm in})}}\;
s_{\Lambda}(\gamma)(\pos_{\Lambda}(\gamma)+1) \,\, {\rm mod} \,\, 2.
\end{gathered}
\end{gather*}
\end{lemma}

\begin{proof}
Via a direct calculation: 
one starts with the first line and rewrites all 
$\pos_{\Lambda}(\gamma)$ in terms of $\op{t}(C)$ using 
\fullref{lemma:posandtag}. Then we use the same equalities 
as in the proof of \fullref{lemma:comparesaddles}. 
Finally one has to use \fullref{lemma:compareexteriorinterior} 
to arrive at the second line.
\end{proof}
\subsubsection{Proof of Claim \eqref{eq:show-me-please} and thus of \fullref{proposition:crazy-isomorphism}}
We need to 
check the same 
cases \ref{enum:nnm}-\ref{enum:ns} as 
in the proof of \fullref{theorem:matchalgebrasold} and additionally need to distinguish the cases 
with different orientations of the circles in question. 
All components 
not involved in the surgery from $\stacked_{l}$ 
to $\stacked_{l+1}$ remain unchanged,
and we ignore them in the following.

\noindent\textbf{Non-nested merge.} 
Assume that circles $C_i$ and $C_j$ are merged into a circle $C$.
In this case we have (as one easily verifies)
\begin{gather}\label{eq:first-cupcapthingy}
\icup(C_i)\,{\scriptstyle\cup}\,\icap(C_i)\,{\scriptstyle\cup}\,\icup(C_j)\,{\scriptstyle\cup}\,\icap(C_j)=\icup(C)\,{\scriptstyle\cup}\,\icap(C).
\end{gather}
For an example see \eqref{eq:mult1}.
Now let us look at possible orientations.

\textit{Both, $C_i$ and $C_j$, are oriented anticlockwise.} By \eqref{eq:first-cupcapthingy}, 
we directly obtain
\begin{gather}\label{eq:nnmergeanti}
\coeff(C_i^{\rm anti}) \cdot \coeff(C_j^{\rm anti}) = \coeff(C^{\rm anti}).
\end{gather}
Since $\coeff(\parameterS)=1$ 
in this case, we see that \eqref{eq:show-me-please} holds.

\textit{One circle is oriented anticlockwise, the other clockwise.} If without loos of generality 
$C_i$ is oriented clockwise, then the left-hand 
side of \eqref{eq:nnmergeanti} picks up the coefficient 
$\parsign^{\length_\Lambda(\gamma_i^{\rm dot})}=\parsign^{\op{t}(C)-\op{t}(C_i)}$
from the multiplication rule 
for $\boldsymbol{\rm mult}^{\parameterS}_{\stacked_{l},\stacked_{l+1}}$. 
We again obtain \eqref{eq:show-me-please}, since
\begin{gather*}
\begin{gathered}
\coeff(C_i^{\rm cl}) \cdot \coeff(C_j^{\rm anti}) 
\cdot \parsign^{\op{t}(C)-\op{t}(C_i)}
\!\overset{\eqref{eq:from-anti-to-clock}}{=}\! \coeff(C_i^{\rm anti}) \cdot \parsign^{\op{t}(C_i)} 
\cdot \coeff(C_j^{\rm anti}) \cdot \parsign^{\op{t}(C)-\op{t}(C_i)}
\\
\overset{\eqref{eq:nnmergeanti}}{=}\coeff(C^{\rm anti}) \cdot \parsign^{\op{t}(C)}
\;\overset{\eqref{eq:from-anti-to-clock}}{=} \coeff(C^{\rm cl}).
\end{gathered}
\end{gather*}

\textit{Both, $C_i$ and $C_j$, are oriented clockwise.} In this case we have 
\begin{gather}\label{eq:proofs-part-12}
\begin{aligned}
& \coeff(C_i^{\rm cl}) \cdot \coeff(C_j^{\rm cl}) \cdot \parto \cdot \parsign^{\op{t}(C)-\op{t}(C_i)} \cdot \parsign^{\op{t}(C)-\op{t}(C_j)}\\
\overset{\eqref{eq:from-anti-to-clock}}{=}\;& \coeff(C_i^{\rm anti}) \cdot \coeff(C_j^{\rm anti}) \cdot \parto 
\;\overset{\eqref{eq:nnmergeanti}}{=}\; \coeff(C^{\rm anti}) \cdot \parto,
\end{aligned}
\end{gather}
which again give \eqref{eq:show-me-please}, because 
the multiplication rule 
for $\boldsymbol{\rm mult}^{\parameterS}_{\stacked_{l},\stacked_{l+1}}$ picks 
up the coefficient 
$\coeff(\parameterS)=
\parto\parsign^{\length_\Lambda(\gamma_i^{\rm dot})}\parsign^{\length_\Lambda(\gamma_j^{\rm dot})}
=\parto\parsign^{\op{t}(C)-\op{t}(C_i)}\parsign^{\op{t}(C)-\op{t}(C_j)}$.

\noindent\textbf{Nested merge.}
In this case two nested circles, $C_{\rm out}$ and 
$C_{\rm in}$, are merged into one circle $C$. In the nested 
situation---also for the nested split below---the 
notion of exterior and interior swaps for the nested circle $C_{\rm in}$. 
Moreover, in case of the nested merge, the cup-cap pair involved in the surgery 
is of the form $\icup$\,-\,$\ecap$ or 
of the form $\ecup$\,-\hspace*{0.06cm}$\icap$ and hence, is ``lost'' 
after the surgery.
That is, we have altogether
\begin{gather}\label{eq:second-cupcapthingy}
\left(\icup(C_{\mathrm{out}})\,{\scriptstyle\cup}\,\icap(C_{\mathrm{out}})
\,{\scriptstyle\cup}\,\ecup(C_{\mathrm{in}})\,{\scriptstyle\cup}\,\ecap(C_{\mathrm{in}})\right)\setminus\text{surg}
=\icup(C)\,{\scriptstyle\cup}\,\icap(C).
\end{gather}
Here ``surg'' is the set containing the cup-cap of the surgery, see e.g. \eqref{eq:mult2}.

\textit{Both, $C_{\rm out}$ and $C_{\rm in}$, are oriented anticlockwise.} 
Now we get the coefficient 
$\coeff(\parameterS)
=\parsign \cdot \parsign^{\neatfrac{1}{4}\left(\length_\Lambda(C_{\mathrm{in}})-2\right)}
\cdot \parsign^{s_{\Lambda}(\gamma)}$ from 
$\boldsymbol{\rm mult}^{\parameterS}_{\stacked_{l},\stacked_{l+1}}$,
and then \eqref{eq:show-me-please} follows from the calculation
\begin{gather}\label{eq:the-above}
\begin{aligned}
&\coeff(C^{\rm anti}_{\rm out}) \cdot \coeff(C^{\rm anti}_{\rm in}) \cdot \parsign  \cdot \parsign^{\neatfrac{1}{4}\left(\length_\Lambda(C_{\mathrm{in}})-2\right)} 
\cdot \parsign^{s_{\Lambda}(\gamma)}
\\
=\;\;& \coeff(C^{\rm anti}_{\rm out})  
\cdot 
{\textstyle\prod_{\gamma^{\prime} \in \icups(C_{\rm in})}}\; 
\parsign^{(s_{\Lambda}(\gamma^{\prime})+1)\pos_\Lambda(\gamma^{\prime})}
\cdot 
{\textstyle\prod_{\gamma^{\prime} \in \icaps(C_{\rm in})}}\; 
\parsign^{s_{\Lambda}(\gamma^{\prime})(\pos_\Lambda(\gamma^{\prime})+1)}
\\
&  \cdot 
{\textstyle\prod_{\gamma^{\prime} \in \icups(C_{\rm in})}}\;
\paro^{-s_{\Lambda}(\gamma^{\prime})} 
\cdot 
{\textstyle\prod_{\gamma^{\prime} \in \icaps(C_{\rm in})}}\; 
\paro^{s_{\Lambda}(\gamma^{\prime})-1} \cdot \parsign \cdot \parsign^{\neatfrac{1}{4}\left(\length_\Lambda(C_{\mathrm{in}})-2\right)} \cdot \parsign^{s_{\Lambda}(\gamma)}
\\
\overset{(\mathrm{I})}{=}\;\;& 
\coeff_{\parsign}(C^{\rm anti}_{\rm out}) 
\cdot 
{\textstyle\prod_{\gamma^{\prime} \in \ecups(C_{\rm in})}}\;
\parsign^{(s_{\Lambda}(\gamma^{\prime})+1)\pos_{\Lambda}(\gamma^{\prime})} 
\cdot 
{\textstyle\prod_{\gamma^{\prime} \in \ecaps(C_{\rm in})}}\; 
\parsign^{s(\gamma^{\prime})(\pos_{\Lambda}(\gamma^{\prime})+1)}
\\
&\!\!\!\!\!\cdot \;\coeff_{\paro}(C^{\rm anti}_{\rm out}) 
\cdot 
{\textstyle\prod_{\gamma^{\prime} \in \icups(C_{\rm in})}}\; 
\paro^{-s_{\Lambda}(\gamma^{\prime})} 
\cdot 
{\textstyle\prod_{\gamma^{\prime} \in \icaps(C_{\rm in})}}\; 
\paro^{s_{\Lambda}(\gamma^{\prime})-1}
\cdot \parsign^{\pos_{\Lambda}(\gamma)+s_{\Lambda}(\gamma)}
\\
\overset{(\mathrm{II})}{=}\;\;& 
\coeff_{\parsign}(C^{\rm anti}_{\rm out}) 
\cdot 
{\textstyle\prod_{\gamma^{\prime} \in \ecups(C_{\rm in})}}\; 
\parsign^{(s_{\Lambda}(\gamma^{\prime})+1)\pos_{\Lambda}(\gamma^{\prime})} \cdot 
{\textstyle\prod_{\gamma^{\prime} \in \ecaps(C_{\rm in})}}\; 
\parsign^{s_{\Lambda}(\gamma^{\prime})(\pos_{\Lambda}(\gamma^{\prime})+1)}
\\
&\!\!\!\!\cdot \;\coeff_{\paro}(C^{\rm anti}_{\rm out}) 
\cdot 
{\textstyle\prod_{\gamma^{\prime} \in \ecups(C_{\rm in})}}\; 
\paro^{-s_{\Lambda}(\gamma^{\prime})} \cdot 
{\textstyle\prod_{\gamma^{\prime} \in \ecaps(C_{\rm in})}}\; 
\paro^{s_{\Lambda}(\gamma^{\prime})-1} \cdot \parsign^{\pos_{\Lambda}(\gamma)+s_{\Lambda}(\gamma)}\cdot \paro
\\
\overset{\eqref{eq:second-cupcapthingy}}{=}\;\;&
\coeff_{\parsign}(C^{\rm anti})\cdot\coeff_{\paro}(C^{\rm anti})=\coeff(C^{\rm anti}).
\end{aligned}
\end{gather} \makeautorefname{lemma}{Lemmas}
Here (I) follows from \fullref{lemma:posandtag} 
and \ref{lemma:comparecircleinversion} 
(since $\parsign=\pm 1$),\makeautorefname{lemma}{Lemma} 
and (II) from \fullref{lemma:comparesaddles}. 
Moreover, note that 
$\parsign^{\pos_{\Lambda}(\gamma)+s_{\Lambda}(\gamma)}\paro$ 
is the inverse of the coefficient coming 
from the cup-cap pair in the surgery (counting them both).

\textit{$C_{\rm out}$ is oriented clockwise and $C_{\rm in}$ anticlockwise.} 
In this case both sides are just 
multiplied with $\parsign^{\op{t}(C)}=\parsign^{\op{t}(C_{\rm out})}$. Hence, 
the calculation from \eqref{eq:the-above} gives
\begin{gather}\label{eq:proofs-part-13}
\coeff(C^{\rm cl}_{\rm out}) \cdot \coeff(C^{\rm anti}_{\rm in}) \cdot \parsign  \cdot \parsign^{\neatfrac{1}{4}\left(\length_\Lambda(C_{\mathrm{in}})-2\right)} \cdot \parsign^{s_{\Lambda}(\gamma)} =  \coeff(C^{\rm cl}).
\end{gather}
Thus, we again obtain \eqref{eq:show-me-please}, since 
$\boldsymbol{\rm mult}^{\parameterS}_{\stacked_{l},\stacked_{l+1}}$ 
does not give extra factors additionally to the ones from above.

\textit{$C_{\rm in}$ is oriented clockwise and $C_{\rm out}$ anticlockwise.} In 
this case the coefficient of $C$ is multiplied with 
$\parsign^{\op{t}(C)}$, while the one for $C_{\rm in}$ 
is multiplied with $\parsign^{\op{t}(C_{\rm in})}$. But in addition 
the multiplication also introduces a dot moving. Hence, by \eqref{eq:the-above},
\begin{gather}\label{eq:nestedmerge}
\coeff(C^{\rm anti}_{\rm out}) \cdot \coeff(C^{\rm cl}_{\rm in}) \cdot \parsign \cdot \parsign^{\op{t}(C)-\op{t}(C_{\rm in})} \cdot \parsign^{\neatfrac{1}{4}\left(\length_\Lambda(C_{\mathrm{in}})-2\right)} \cdot \parsign^{s_{\Lambda}(\gamma)} =  \coeff(C^{\rm cl}),
\end{gather}
which again gives \eqref{eq:show-me-please}, since 
$\boldsymbol{\rm mult}^{\parameterS}_{\stacked_{l},\stacked_{l+1}}$ 
gives, additionally to the factors from above, the extra coefficient 
$\parsign^{\length_\Lambda(\gamma^{\mathrm{dot}}_{\mathrm{in}})}
=\parsign^{\op{t}(C)-\op{t}(C_{\rm in})}$.

\textit{Both, $C_{\rm in}$ and $C_{\rm out}$, are oriented clockwise.} 
In this case we obtain two dot moving signs, but 
the one for $C_{\rm out}$ is, as before, equal to $1$. 
Thus, we obtain the same as in \eqref{eq:nestedmerge}, but multiplied 
on both sides as in \eqref{eq:second-cupcapthingy} with 
$\parto \cdot \parsign^{\op{t}(C)}$ 
which shows \eqref{eq:show-me-please}.

\noindent\textbf{Non-nested split.}
In this case a circle $C$ is split into two non-nested 
circles $C_i$ 
and $C_j$ (containing the vertexes at positions $i$ or $j$). 
We clearly have 
\begin{gather}\label{eq:third-cupcapthingy}
\icup(C)\,{\scriptstyle\cup}\,\icap(C)
=
\icup(C_{i})\,{\scriptstyle\cup}\,\icap(C_{i})
\,{\scriptstyle\cup}\,\icup(C_{j})\,{\scriptstyle\cup}\,\icap(C_{j})\,{\scriptstyle\cup}\,\text{surg},
\end{gather}
where ``surg'' is as above.
For an example see \eqref{eq:mult3}.

\textit{$C$ is oriented anticlockwise.} 
By \eqref{eq:third-cupcapthingy}, we get
\begin{gather}\label{eq:some-useful-guy}
\coeff(C^{\rm anti})
=
\coeff(C_i^{\rm anti}) \cdot \coeff(C_j^{\rm anti}) 
\cdot \parsign^{\pos_{\Lambda}(\gamma)+s_{\Lambda}(\gamma)} \cdot \paro^{-1},
\end{gather}
since, as above, $\parsign^{\pos_{\Lambda}(\gamma)+s_{\Lambda}(\gamma)}\paro^{-1}$ 
is the coefficient coming from the cup-cap pair in the surgery 
(recalling that $\parsign=\pm 1$). 
Now, we have 
$\coeff(\parameterS)=\paro
\parsign^{\length_{\Lambda}(\gamma_i^{\rm ndot})}
\parsign^{s_{\Lambda}(\gamma)}$. 
By \fullref{lemma:posandtag} and $\parsign=\pm 1$ we have 
$\parsign^{\length_{\Lambda}(\gamma_i^{\rm ndot})}=
\parsign^{\op{t}(C_i)-\pos_{\Lambda}(\gamma)}$.
This in turn gives
\begin{gather}\label{eq:proofs-part-14}
\begin{aligned}
& \coeff(C^{\rm anti})\cdot\paro\cdot\parsign^{\length_{\Lambda}(\gamma_i^{\rm ndot})}\cdot\parsign^{s_{\Lambda}(\gamma)}\\
\overset{\eqref{eq:some-useful-guy}}{=}\;\;& 
\coeff(C_i^{\rm anti})\cdot\coeff(C_j^{\rm anti})\cdot\paro\cdot\parsign^{\op{t}(C_i)-\pos_{\Lambda}(\gamma)}\cdot\parsign^{s_{\Lambda}(\gamma)}
\cdot \parsign^{\pos_{\Lambda}(\gamma)+s_{\Lambda}(\gamma)}
\cdot \paro^{-1}
\\
=\;\;\;& \coeff(C_i^{\rm anti})\cdot\parsign^{\op{t}(C_i)} \cdot \coeff(C_j^{\rm anti})
\overset{\eqref{eq:from-anti-to-clock}}{=} \coeff(C_i^{\rm cl}) \cdot \coeff(C_j^{\rm anti}).
\end{aligned}
\end{gather}
The term with $C_j$ is oriented clockwise instead is dealt 
with completely analogously using the fact that 
$\parsign^{\pos_{\Lambda}(j)} = 
\parsign^{\pos_{\Lambda}(\gamma)+1}$ (by definition). 
We obtain \eqref{eq:show-me-please}.

\textit{$C$ is oriented clockwise.} We 
first compare the coefficients for the term where 
both, $C_i$ and $C_j$, are oriented clockwise 
(thus,  
$\coeff(\parameterS)=\paro
\parsign^{\length_{\Lambda}(\gamma_j^{\rm dot})}
\parsign^{\length_{\Lambda}(\gamma_i^{\rm ndot})}
\parsign^{s_{\Lambda}(\gamma)}$) and obtain 
by rewriting the dot moving signs similar as above (using $\parsign=\pm 1$)
\begin{gather}\label{eq:proofs-part-15}
\begin{gathered}
\coeff(C^{\rm cl}) \cdot \paro \cdot \parsign^{\length_{\Lambda}(\gamma_j^{\rm dot})} \cdot \parsign^{\length_{\Lambda}(\gamma_i^{\rm ndot})}\cdot \parsign^{s_{\Lambda}(\gamma)}
\\
=\; \coeff(C^{\rm cl}) \cdot \paro \cdot 
\parsign^{\op{t}(C)-\op{t}(C_j)}\cdot
\parsign^{\op{t}(C_i)-\pos_{\Lambda}(\gamma)} \cdot \parsign^{s_{\Lambda}(\gamma)}
\\
\overset{\eqref{eq:from-anti-to-clock}}{=}\; \coeff(C^{\rm anti}) \cdot \paro \cdot \parsign^{\op{t}(C_j)} \cdot \parsign^{\op{t}(C_i)-\pos_{\Lambda}(\gamma)} \cdot \parsign^{s_{\Lambda}(\gamma)}
\\
\overset{\eqref{eq:some-useful-guy}}{=}\; \coeff(C_i^{\rm anti})   \cdot \parsign^{\op{t}(C_i)} \cdot \coeff(C_j^{\rm anti}) \cdot \parsign^{\op{t}(C_j)}
\;\overset{\eqref{eq:from-anti-to-clock}}{=}\; \coeff(C_i^{\rm cl}) \cdot \coeff(C_j^{\rm cl}).
\end{gathered}
\end{gather}
Hence, we have \eqref{eq:show-me-please}.
For the term where both $C_i$ and $C_j$ are oriented anticlockwise 
(where we have 
$\coeff(\parameterS)=\parto\parsign\paro
\parsign^{\length_{\Lambda}(\gamma_j^{\rm dot})}
\parsign^{\length_{\Lambda}(\gamma_j^{\rm ndot})}
\parsign^{s_{\Lambda}(\gamma)}$) we obtain
\begin{gather}\label{eq:proofs-part-16}
\begin{gathered}
\coeff(C^{\rm cl}) \cdot \parto \cdot \parsign \cdot \paro \cdot \parsign^{\length_{\Lambda}(\gamma_j^{\rm dot})} \cdot \parsign^{\length_{\Lambda}(\gamma_j^{\rm ndot})}\cdot \parsign^{s_{\Lambda}(\gamma)}
\\
=\; \coeff(C^{\rm cl}) \cdot \parto \cdot \parsign \cdot \paro \cdot \parsign^{\op{t}(C_j)-\pos_{\Lambda}(j)} \cdot \parsign^{\op{t}(C)-\op{t}(C_j)}\cdot \parsign^{s_{\Lambda}(\gamma)}
\\
\overset{\eqref{eq:from-anti-to-clock}}{=}\; \coeff(C^{\rm anti}) \cdot \parto \cdot \parsign \cdot \paro \cdot \parsign^{\pos_{\Lambda}(j)} \cdot \parsign^{s(\gamma)}
\;=\; \coeff(C^{\rm anti}) \cdot \parto \cdot \parsign^{\pos_{\Lambda}(\gamma)+s_{\Lambda}(\gamma)}\cdot \paro
\\
\overset{\eqref{eq:some-useful-guy}}{=}\; \coeff(C_i^{\rm anti}) \cdot \coeff(C_j^{\rm anti})  \cdot \parto,
\end{gathered}
\end{gather}
where we again use the crucial fact that $\parsign=\pm 1$.
Thus, we obtain \eqref{eq:show-me-please}.

\noindent\textbf{Nested split.}
In this case one circle $C$ is split into 
two nested circles $C_{\rm out}$ and $C_{\rm in}$. The steps are very similar 
to the case of the nested merge before, with the main 
difference that, instead of \eqref{eq:second-cupcapthingy}, 
we have
\begin{gather}\label{eq:last-cupcapthingy}
\icup(C)\,{\scriptstyle\cup}\,\icap(C)=
\icup(C_{\mathrm{out}})\,{\scriptstyle\cup}\,\icap(C_{\mathrm{out}})
\,{\scriptstyle\cup}\,\ecup(C_{\mathrm{in}})\,{\scriptstyle\cup}\,\ecap(C_{\mathrm{in}}).
\end{gather}
For an example see \eqref{eq:mult4}.
By \eqref{eq:last-cupcapthingy}, we obtain 
(with (III) similar as in \eqref{eq:the-above})
\begin{gather}\label{eq:proofs-part-17}
\begin{aligned}
& \coeff(C^{\rm anti})\\
=\;\;\,& \coeff_{\parsign}(C^{\rm anti}_{\rm out}) \cdot 
{\textstyle\prod_{\gamma^{\prime} \in \ecaps(C_{\rm in})}}\; 
\parsign^{s_{\Lambda}(\gamma^{\prime})(\pos_{\Lambda}(\gamma^{\prime})+1)} \cdot 
{\textstyle\prod_{\gamma^{\prime} \in \ecups(C_{\rm in})}}\;
\parsign^{(s_{\Lambda}(\gamma^{\prime})+1)\pos_{\Lambda}(\gamma^{\prime})}
\\ & \!\!\!\!\cdot\;\coeff_{\paro}(C^{\rm anti}_{\rm out}) \cdot 
{\textstyle\prod_{\gamma^{\prime} \in \ecaps(C_{\rm in})}}\;
\paro^{s_{\Lambda}(\gamma^{\prime})-1} \cdot 
{\textstyle\prod_{\gamma^{\prime} \in \ecups(C_{\rm in})}}\;
\paro^{-s_{\Lambda}(\gamma^{\prime})}\\
\overset{(\mathrm{III})}{=}\;&\coeff(C^{\rm anti}_{\rm out}) \cdot \coeff(C^{\rm anti}_{\rm in}) \cdot \paro^{-1} \cdot \parsign^{\neatfrac{1}{4}\left(\length_\Lambda(C_{\mathrm{in}})-2\right)} \cdot \parsign^{\op{t}(C_{\rm in})}.
\end{aligned}
\end{gather}

\textit{$C$ is oriented anticlockwise.}  
We have to multiply the coefficient $\coeff(C^{\rm anti})$ 
by
\begin{gather}\label{eq:proofs-part-18}
\coeff(\parameterS)=\paro\cdot\parsign^{\neatfrac{1}{4}
\left(\length_\Lambda(C_{\mathrm{in}})-2\right)}
\quad\text{respectively}\quad
\coeff(\parameterS)=\parsign\cdot\paro\cdot\parsign^{\neatfrac{1}{4}\left(\length_\Lambda(C_{\mathrm{in}})-2\right)}
\end{gather}
and compare 
it to the coefficient $\coeff(C^{\rm anti}_{\rm out}) 
\coeff(C^{\rm cl}_{\rm in})$ respectively to
$\coeff(C^{\rm cl}_{\rm out})\coeff(C^{\rm anti}_{\rm in})$. 
In both cases \eqref{eq:show-me-please} follows then by 
\fullref{lemma:comparecircleinversion}.

\textit{$C$ is oriented clockwise.} This is done in an 
analogous way. Since we have $\coeff(C^{\rm cl}) = 
\coeff(C^{\rm anti})\parsign^{\op{t}(C_{\rm out})}$, 
this fits for both appearing terms.

\noindent
Altogether we verified Claim \eqref{eq:show-me-please} and thus  \fullref{proposition:crazy-isomorphism} follows.
\subsection{Proof of \texorpdfstring{\fullref{proposition:crazy-module-maps}}{Proposition \ref{proposition:crazy-module-maps}}}\label{subsec:proof-three}
The proof is done in complete analogy to 
the proof of \fullref{proposition:crazy-isomorphism}. 
We show that in each step the coefficient maps defined 
above for stacked diagrams intertwine the multiplication 
steps, i.e. in each step \eqref{eq:show-me-please} 
holds true. Since the coefficient map is only modified slightly, 
it is clear that all arguments for the non-nested merge and 
non-nested split are valid in the exact same way as before. 
For the nested cases the swap of exterior and interior of the 
inner circle $C_{\rm in}$ is more involved and we need some preparatory lemmas which also establish \fullref{remark:new-coeff-intertwines} and \fullref{lemma:signs-are-constant-next}.
\subsubsection{Combinatorial preparations for the proof of \texorpdfstring{\fullref{proposition:crazy-module-maps}}{Proposition \ref{proposition:crazy-module-maps}}}
Fix $\Lambda\in\bblock$.

\begin{lemma}\label{lemma:posandtag-forrays}
Let $C$ be any circle in a stacked diagram $\stacked$.
\begin{enumerate}[label=(\alph*)]
\item If $\gamma \in\eright(C)$, then 
$\pos_{\Lambda}(\gamma) \equiv \op{t}(C) + 1 \,{\rm mod}\, 2$.
\item If $\gamma \in\eleft(C)$, then 
$\pos_{\Lambda}(\gamma) \equiv \op{t}(C)\,{\rm mod}\, 2$.
\item If $\gamma \in\ileft(C)$, then 
$\pos_{\Lambda}(\gamma) \equiv \op{t}(C)+1 \,{\rm mod}\, 2$.
\item If $\gamma \in\iright(C)$, then 
$\pos_{\Lambda}(\gamma) \equiv \op{t}(C) \,{\rm mod}\, 2$.
\end{enumerate}
\end{lemma}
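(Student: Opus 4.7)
My plan is to prove this lemma in the same spirit as \fullref{lemma:posandtag}, by reducing each of the four statements to a minimal base configuration and then showing that parity is preserved under the natural local moves that build up an arbitrary circle $C$. The key observation throughout is that a symbol $\times$ contributes $2$ to $\pos_\Lambda$ and a symbol $\dummy$ contributes $1$, so any ray-type arc $\gamma$ (going from $\times\,\dummy$ to $\dummy\,\times$ or the reverse) has the feature that its two endpoints have positions differing by $2$, and therefore have the same parity. In particular $\pos_\Lambda(\gamma)$, which by definition is the position of the leftmost endpoint, is well-defined modulo $2$ regardless of which endpoint one takes.

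First I would establish the base case for each of the four statements $(a)$--$(d)$. For $(a)$, consider a small circle $C$ consisting of a single cup, a single cap, and one arc $\gamma\in\eright(C)$; the exterior of $C$ lies to the lower left of $\gamma$, which forces the rightmost point $\op{t}(C)$ to be separated from $\pos_\Lambda(\gamma)$ by an odd number of positions (essentially the distance across $\gamma$ plus the traversal around the right edge of $C$). A short case analysis using the definition of $\pos_\Lambda$ shows $\pos_\Lambda(\gamma) \equiv \op{t}(C)+1\bmod 2$. The analogous base cases for $(b)$, $(c)$, $(d)$ are obtained by the symmetric configurations (swapping the side on which the exterior/interior sits), and each is verified by direct inspection.

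For the inductive step, as in the proof of \fullref{lemma:posandtag}, any circle $C$ containing such ray-type arcs can be built from a base circle by successively inserting zig-zag modifications, i.e.\ additional cup-cap pairs along the boundary, and analogous pair-insertions of ray-type arcs. Each such modification either leaves $\op{t}(C)$ unchanged or shifts it by an even amount, and simultaneously either leaves $\pos_\Lambda(\gamma)$ unchanged or shifts it by the same parity, because the only way to move the leftmost endpoint of $\gamma$ is to slide a $\times$ across a finite number of $\dummy$ or $\times$ symbols, each of which changes $\pos_\Lambda$ by $1$ or $2$ in a controlled way. Thus the parity congruence of the base case propagates to every larger circle.

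The main obstacle is essentially bookkeeping: one must carefully separate the four cases by the combined data of (i) the horizontal direction of the ray ($\times$ moving right or left) and (ii) whether the exterior or interior of $C$ sits on the lower left or lower right, since these determine the parity shift between $\pos_\Lambda(\gamma)$ and the path around $C$ back to $\op{t}(C)$. Once this case distinction is made explicit, each case reduces to essentially the same short parity argument as in \fullref{lemma:posandtag}, and no further technical input is required.
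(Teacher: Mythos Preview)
Your proposal is correct and follows essentially the same approach as the paper: the key observation is that the symbol $\times$ contributes $2$ to $\pos_\Lambda$, so traversing a ray-type arc preserves parity, after which the argument of \fullref{lemma:posandtag} carries over. The paper's proof is simply a two-sentence version of what you wrote, noting the parity preservation and then deferring to \fullref{lemma:posandtag} without spelling out the base cases and zig-zag induction explicitly.
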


\begin{proof}
Recall that the symbol $\times$ counts as being of length $2$. 
Hence, moving across parts in (a)-(d) 
preserves the parity. Thus, the claim follows 
as in \fullref{lemma:posandtag}.
\end{proof}

\begin{lemma}\label{lemma:comparesaddles-bimodules}
Let $C$ be any circle in a stacked diagram $\stacked$. Then
\begin{gather}\label{eq:more-dull-1}
\begin{gathered}
-
{\textstyle\sum_{\gamma \in \ecups(C)}}\; 
(s_{\Lambda}(\gamma) -1)  + 
{\textstyle\sum_{\gamma \in \ecaps(C)}}\; 
(s_{\Lambda}(\gamma) -1) + 
{\textstyle\sum_{\gamma \in\ilefts(C)}}\; 
1 + 
{\textstyle\sum_{\gamma \in\irights(C)}}\; 
1
\\
=-
{\textstyle\sum_{\gamma \in \icups(C)}}\;
s_{\Lambda}(\gamma) + 
{\textstyle\sum_{\gamma \in \icaps(C)}}\;
s_{\Lambda}(\gamma) + 
{\textstyle\sum_{\gamma \in\elefts(C)}}\; 
1 + 
{\textstyle\sum_{\gamma \in\erights(C)}}\; 
1,
\end{gathered}
\end{gather}
and both equal $\neatfrac{1}{4}\left(\length_\Lambda(C)-2\right)$.
\end{lemma}

\begin{proof}
This follows immediately by interpreting 
$\neatfrac{1}{4}\left(\length_\Lambda(C)-2\right)$ 
as the number of internal phantom edges of 
the circle as done in \cite[Lemma 4.10]{EST}.
\end{proof}

\begin{lemma}\label{lemma:signs-are-constant}
For a stacked diagram $\stacked^{\mathrm{or}}$ 
and a circle $C$ in it, we have
\begin{gather}\label{eq:more-dull-2}
\coeffb_{\parsign}(C,\stacked^{\mathrm{or}}) = 
\coeff_{\parsign}(C,\stacked^{\mathrm{or}}) \cdot \factor_{\parsign}(C),\;
\coeffb_{\paro}(C,\stacked^{\mathrm{or}}) = 
\coeff_{\paro}(C,\stacked^{\mathrm{or}}) \cdot \factor_{\paro}(C),\\
\begin{aligned}
\factor_{\parsign}(C) =\;\;& 
{\textstyle\prod_{\gamma \in \icups(C) \,\cup\, \ecups(C)}}\;
\parsign^{\pos_\Lambda(\gamma) + s_\Lambda(\gamma)} \\
=\;\;& 
{\textstyle\prod_{\gamma \in \icaps(C) \,\cup\, \ecaps(C)}}\; 
\parsign^{\pos_\Lambda(\gamma) + s_\Lambda(\gamma)} \cdot \parsign^{ \#\left( \eright(C) \,\cup\, \eleft(C) \,\cup\, \iright(C) \,\cup\, \ileft(C)\right)},\\
\factor_{\paro}(C) =\;\;& \paro^{ \#\left( \icup(C) \,\cup\, \ecup(C)\right)} = \paro^{ \#\left( \icap(C) \,\cup\, \ecap(C)\right)}.
\end{aligned}
\end{gather}
\end{lemma}

\begin{proof}\makeautorefname{lemma}{Lemmas}
Consider $\coeffb_{\parsign}(C,\stacked^{\mathrm{or}})$. After rewriting 
all positions with respect to the rightmost point using \fullref{lemma:posandtag} and \ref{lemma:posandtag-forrays}, \makeautorefname{lemma}{Lemma} we obtain with \fullref{lemma:comparesaddles-bimodules} for the left had side the following
\begin{gather}\label{eq:more-dull-3}
\begin{gathered}
{\textstyle\sum_{\gamma \in \ecups(C)}}\; 
s_\Lambda(\gamma){\rm t}(C) + 
{\textstyle\sum_{\gamma \in \ecaps(C)}}\; 
(s_\Lambda(\gamma)-1)({\rm t}(C)+1) 
\\
+ 
{\textstyle\sum_{\gamma \in \irights(C)}}\;
({\rm t}(C)+1) + 
{\textstyle\sum_{\gamma \in \ilefts(C)}}\;
({\rm t}(C)+1)
\\
\equiv\left( 
{\textstyle\sum_{\gamma \in \ecups(C)}}\; 
(s_\Lambda(\gamma)-1) +
{\textstyle\sum_{\gamma \in \ecaps(C)}}\; 
(s_\Lambda(\gamma)-1) + \# 
\left( \iright(C) \,{\scriptstyle \cup}\, \ileft(C)\right) \right) {\rm t}(C) 
\\
+ \#(\ecup(C)) {\rm t}(C) + 
{\textstyle\sum_{\gamma \in \ecaps(C)}}\; 
(s_\Lambda(\gamma)-1) + \# \left( \iright(C) \,{\scriptstyle \cup}\,\ileft(C) \right) 
\\
\equiv\left( 
{\textstyle\sum_{\gamma \in \icups(C)}}\; 
s_\Lambda(\gamma) +
{\textstyle\sum_{\gamma \in \icaps(C)}}\; 
s_\Lambda(\gamma) + \# \left( \eright(C) 
\,{\scriptstyle \cup}\, \eleft(C)\right) \right) {\rm t}(C) 
\\
+ \#(\ecup(C)) {\rm t}(C) + 
{\textstyle\sum_{\gamma \in \ecaps(C)}}\; 
(s_\Lambda(\gamma)-1) + \# \left( \iright(C) \,{\scriptstyle \cup}\, \ileft(C) \right).
\end{gathered}
\end{gather}
with all congruences modulo $2$.
Collecting terms belonging to $\coeff_{\parsign}(C,\stacked^{\mathrm{or}})$ we are left with
\begin{gather}\label{eq:more-dull-4}
\begin{gathered}
\# \left( \eright(C) \,{\scriptstyle \cup}\, \eleft(C) 
\,{\scriptstyle \cup}\, \iright(C) \,{\scriptstyle \cup}\, \ileft(C)\right) 
\\
+ \#(\icap(C)) {\rm t}(C) + 
{\textstyle\sum_{\gamma \in \icaps(C)}}\; 
s_\Lambda(\gamma) + \#(\ecap(C)) {\rm t}(C) + 
{\textstyle\sum_{\gamma \in \ecaps(C)}}\; 
(s_\Lambda(\gamma)-1)
\\
\overset{\fullref{lemma:posandtag}}{\equiv}
\# \left( \eright(C) \,{\scriptstyle \cup}\, \eleft(C) 
\,{\scriptstyle \cup}\, \iright(C) \,{\scriptstyle \cup}\, \ileft(C)\right) 
\\
+ 
{\textstyle\sum_{\gamma \in \icaps(C)}}\; 
(\pos_\Lambda(\gamma) + s_\Lambda(\gamma)) + 
{\textstyle\sum_{\gamma \in \ecaps(C)}}\; 
(\pos_\Lambda(\gamma) + s_\Lambda(\gamma)).
\end{gathered}
\end{gather}
That this can be rewritten with 
respect to cups instead is just an 
application of 
\fullref{lemma:comparesaddles-bimodules} 
to both sums in the first line above.

Next, the (easier) $\paro$-term:
$\coeffb_{\paro}(C,\stacked^{\mathrm{or}})$ can be rewritten as
\begin{gather}\label{eq:more-dull-5}
\begin{gathered}
-
{\textstyle\sum_{\gamma \in \ecups(C)}}\;
(s_\Lambda(\gamma) - 1) + 
{\textstyle\sum_{\gamma \in \ecaps(C)}}\; 
s_\Lambda(\gamma)  + \# \left( \iright(C)\, {\scriptstyle \cup}\, \ileft(C) \right)
\\
\overset{\fullref{lemma:comparesaddles-bimodules}}{=}
-
{\textstyle\sum_{\gamma \in \icups(C)}}\; 
s_\Lambda(\gamma) + 
{\textstyle\sum_{\gamma \in \icaps(C)}}\; 
(s_\Lambda(\gamma) -1) + \#\left( \eright(C)\,{\scriptstyle \cup}\, \eleft(C) \right) 
\\
+\# \left( \icap(C) \,{\scriptstyle \cup}\, \ecap(C)\right).
\end{gathered}
\end{gather}
The first three summands are the 
powers in $\coeff_{\paro}(C,\stacked^{\mathrm{or}})$, while 
the last term is the power in $\chi_{\paro}(C)$. 
That $\chi_{\paro}(C)$ can be written in the two 
ways is evident.
\end{proof}
\subsubsection{Proof of \eqref{eq:show-me-please} and thus of  \fullref{proposition:crazy-module-maps}}
The proof of the proposition is totally analogously to \fullref{proposition:crazy-isomorphism} except of the case of nested merges which we treat now carefully. Assume therefore that two nested circles $C_{\rm out}$ and $C_{\rm in}$ 
are merged into one circle $C$. As in the proof of 
\fullref{proposition:crazy-isomorphism}, 
the notion of exterior and interior swaps for the nested 
circle $C_{\rm in}$. 
Overall the situation is similar in the sense that the 
cup-cap pair involved in the surgery is of the form 
$\icup$\,-\,$\ecap$ or of the form $\ecup$\,-\hspace*{0.06cm}$\icap$. That is, we have
\begin{gather}\label{eq:cupcapthingy-module}
\left(\icup(C_{\mathrm{out}})\,{\scriptstyle\cup}\,\icap(C_{\mathrm{out}})
\,{\scriptstyle\cup}\,\ecup(C_{\mathrm{in}})\,{\scriptstyle\cup}\,\ecap(C_{\mathrm{in}})\right)\setminus\text{surg}
=\icup(C)\,{\scriptstyle\cup}\,\icap(C),
\end{gather}
where ``surg'' is the set containing the cup and cap of the surgery, and we have
\begin{gather}\label{eq:second-cupcapthingy-module}
\eright(C_{\mathrm{out}})\,{\scriptstyle{\scriptstyle\cup}}\,\eleft(C_{\mathrm{out}})
\,{\scriptstyle\cup}\,
\iright(C_{\mathrm{in}})\,{\scriptstyle\cup}\,\ileft(C_{\mathrm{in}})=\eright(C)\,{\scriptstyle\cup}\,\eleft(C).
\end{gather}

\textit{Both, $C_{\rm out}$ and $C_{\rm in}$, are oriented anticlockwise.} 
Similarly as before, we get \eqref{eq:show-me-please}:
\begin{gather}\label{eq:crazy-equation}
\begin{gathered}
\coeff_{\parsign}(C^{\rm anti}_{\rm out}) \cdot \coeff_{\parsign}(C^{\rm anti}_{\rm in}) 
\cdot \parsign  \cdot \parsign^{\neatfrac{1}{4}\left(\length_\Lambda(C_{\mathrm{in}})-2\right)} 
\cdot \parsign^{s_{\Lambda}(\gamma)}
\\
=\coeff_{\parsign}(C^{\rm anti}_{\rm out})  
\cdot 
{\textstyle\prod_{\gamma^{\prime} \in \icups(C_{\rm in})}}\; 
\parsign^{(s_{\Lambda}(\gamma^{\prime})+1)\pos_{\Lambda}(\gamma^{\prime})}
\cdot 
{\textstyle\prod_{\gamma^{\prime} \in \icaps(C_{\rm in})}}\; 
\parsign^{s_{\Lambda}(\gamma^{\prime})(\pos_{\Lambda}(\gamma^{\prime})+1)}
\\
\cdot
{\textstyle\prod_{\gamma^{\prime} \in\erights(C_{\rm in})}}\; 
\parsign^{\pos_{\Lambda}(\gamma^{\prime})}
\cdot 
{\textstyle\prod_{\gamma^{\prime} \in\elefts(C_{\rm in})}}\;
\parsign^{\pos_{\Lambda}(\gamma^{\prime})+1}
\cdot \parsign \cdot 
\parsign^{\neatfrac{1}{4}\left(\length_\Lambda(C_{\mathrm{in}})-2\right)} \cdot \parsign^{s_{\Lambda}(\gamma)}
\\
\overset{(\mathrm{I})}{=}\coeff_{\parsign}(C^{\rm anti}_{\rm out})  
\cdot 
{\textstyle\prod_{\gamma^{\prime} \in \ecups(C_{\rm in})}}\; 
\parsign^{(s_{\Lambda}(\gamma^{\prime})+1)\pos_{\Lambda}(\gamma^{\prime})}
\cdot
{\textstyle\prod_{\gamma^{\prime} \in \ecaps(C_{\rm in})}}\; 
\parsign^{s_{\Lambda}(\gamma^{\prime})(\pos_{\Lambda}(\gamma^{\prime})+1)} 
\\
\cdot
{\textstyle\prod_{\gamma^{\prime}\in\irights(C_{\rm in})}}\;
\parsign^{\pos_{\Lambda}(\gamma^{\prime})}
\cdot 
{\textstyle\prod_{\gamma^{\prime}\in\ilefts(C_{\rm in})}}\;
\parsign^{\pos_{\Lambda}(\gamma^{\prime})+1}
\cdot 
\parsign^{\pos_{\Lambda}(\gamma)+s_{\Lambda}(\gamma)}
\overset{\eqref{eq:cupcapthingy-module} + \eqref{eq:second-cupcapthingy-module}}{=}
\coeff_{\parsign}(C^{\rm anti}),
\end{gathered}
\end{gather}
\begin{gather}\label{eq:crazy-equation-next}
\begin{gathered}
\coeff_{\parsign}(C^{\rm anti}_{\rm out}) 
\cdot \coeff_{\parsign}(C^{\rm anti}_{\rm in}) \cdot \parsign  \cdot \parsign^{\neatfrac{1}{4}\left(\length_\Lambda(C_{\mathrm{in}})-2\right)} 
\cdot \parsign^{s_{\Lambda}(\gamma)}
\\
=\coeff_{\parsign}(C^{\rm anti}_{\rm out})  
\cdot 
{\textstyle\prod_{\gamma^{\prime} \in \icups(C_{\rm in})}}\; 
\parsign^{(s_{\Lambda}(\gamma^{\prime})+1)\pos_{\Lambda}(\gamma^{\prime})}
\cdot 
{\textstyle\prod_{\gamma^{\prime} \in \icaps(C_{\rm in})}}\; 
\parsign^{s_{\Lambda}(\gamma^{\prime})(\pos_{\Lambda}(\gamma^{\prime})+1)}
\\
\cdot
{\textstyle\prod_{\gamma^{\prime} \in\erights(C_{\rm in})}}\; 
\parsign^{\pos_{\Lambda}(\gamma^{\prime})}
\cdot 
{\textstyle\prod_{\gamma^{\prime} \in\elefts(C_{\rm in})}}\;
\parsign^{\pos_{\Lambda}(\gamma^{\prime})+1}
\cdot \parsign \cdot \parsign^{\neatfrac{1}{4}\left(\length_\Lambda(C_{\mathrm{in}})-2\right)} \cdot \parsign^{s_{\Lambda}(\gamma)}
\\
\overset{(\mathrm{I})}{=} \coeff_{\parsign}(C^{\rm anti}_{\rm out})  
\cdot 
{\textstyle\prod_{\gamma^{\prime} \in \ecups(C_{\rm in})}}\; 
\parsign^{(s_{\Lambda}(\gamma^{\prime})+1)\pos_{\Lambda}(\gamma^{\prime})}
\cdot
{\textstyle\prod_{\gamma^{\prime} \in \ecaps(C_{\rm in})}}\; 
\parsign^{s_{\Lambda}(\gamma^{\prime})(\pos_{\Lambda}(\gamma^{\prime})+1)}
\\
\cdot
{\textstyle\prod_{\gamma^{\prime}\in\irights(C_{\rm in})}}\;
\parsign^{\pos_{\Lambda}(\gamma^{\prime})}
\cdot 
{\textstyle\prod_{\gamma^{\prime}\in\ilefts(C_{\rm in})}}\;
\parsign^{\pos_{\Lambda}(\gamma^{\prime})+1}
\cdot 
\parsign^{\pos_{\Lambda}(\gamma)+s_{\Lambda}(\gamma)}
\overset{\eqref{eq:cupcapthingy-module} + \eqref{eq:second-cupcapthingy-module}}{=}
\coeff_{\parsign}(C^{\rm anti}),
\end{gathered}
\end{gather}
\begin{gather}\label{eq:crazy-equation-2}
\begin{gathered}
\coeff_{\paro}(C^{\rm anti}_{\rm out}) \cdot \coeff_{\paro}(C^{\rm anti}_{\rm in})
\\
=\coeff_{\paro}(C^{\rm anti}_{\rm out}) \cdot 
{\textstyle\prod_{\gamma^{\prime} \in \icups(C_{\rm in})}}\; 
\paro^{-s_{\Lambda}(\gamma^{\prime})}
\cdot 
{\textstyle\prod_{\gamma^{\prime} \in \icaps(C_{\rm in})}}\; 
\paro^{s_{\Lambda}(\gamma^{\prime})-1} \cdot
\paro^{\# \left(\eright(C)\,\cup\,\eleft(C)\right)}
\\
\overset{(\mathrm{II})}{=} \coeff_{\paro}(C^{\rm anti}_{\rm out}) \cdot 
{\textstyle\prod_{\gamma^{\prime} \in \ecups(C_{\rm in})}} 
\paro^{-s_{\Lambda}(\gamma^{\prime})} \cdot 
{\textstyle\prod_{\gamma^{\prime} \in \ecaps(C_{\rm in})}} 
\paro^{s_{\Lambda}(\gamma^{\prime})-1} \cdot
\paro^{\#\left( \iright(C)\,\cup\,\ileft(C)\right)} \cdot \paro
\\
=\coeff_{\paro}(C^{\rm anti}).
\end{gathered}
\end{gather}\makeautorefname{lemma}{Lemmas}
Here (I) follows 
from \fullref{lemma:comparecircleinversion} and \ref{lemma:posandtag-forrays},
while (II) follow from\makeautorefname{lemma}{Lemma}  
\fullref{lemma:comparesaddles-bimodules}.
The arguments for the other orientations are as 
for \fullref{proposition:crazy-isomorphism}.
\subsection{Proof of \texorpdfstring{\fullref{proposition:cats-are-equal-yes}}{Proposition \ref{proposition:cats-are-equal-yes}}}\label{subsec:proof-four}
We are only concerned about $\ring$-ranks of hom-spaces 
in this proof. Consequently, by web evaluation 
\eqref{eq:web-eval} and braid closing, we can stick 
to the upwards oriented setup. Let us write $\webalg[g]^{\based}$ respectively $\webalg[s]^{\based}$ 
if we are in the generic situation or in case \ref{enum:generic} respectively in 
case \ref{enum:semisimple}. We also write $\webalg[*]^{\based}$ if we mean both cases 
(for simplicity of notation we extend scalars to $\ring$).
\subsubsection{The problem}
Because of \fullref{proposition:cats-are-equal} it suffices to 
show that the $\ring$-modules
$\Hom_{\webalg[*]^{\based}}(\M[*]^{\based}(u),\M[*]^{\based}(v))$ are
free of finite rank and to match their ranks. 
The first task is clear because of the cup foam bases. 
The main difficulty is thus to control the number 
of $\webalg[*]^{\based}$-bimodule homomorphisms. We do so by analyzing the 
decomposition structure. 

To this end, 
recall from \fullref{subsec:algmodel} that, 
given a web $u$, then
we can associate to it a $\compmatch$-composite matching $\cc{u}$
by erasing orientations and phantom edges. 
Here choose any presentation of the associated 
$\compmatch$-composite matching $\cc{u}$ in terms of the
basic moves from \cite[(31)]{EST}. 
From this we obtain an $\ArcalgS[*]$-bimodule 
$\Mo[*](\cc{u})$ associated to $\M[*]^{\based}(u)$. 
(The careful reader might want to check 
that different choices in terms of basic moves 
give isomorphic $\ArcalgS[*]$-bimodules.)

The main ingredient in order to control the number of 
$\webalg[a]^{\based}$-bimodule homomorphisms 
is to first use the results from \fullref{subsec:algmodel} 
to identify $\webalg[a]^{\based}$ and its web bimodules with $\ArcalgS[a]$ 
and its arc bimodules. Then further 
identify $\ArcalgS[a]$ with $\ArcalgS[\KBN]$ and their 
arc bimodules by using \makeautorefname{subsection}{Sections}
the results from \fullref{subsec:iso-alg} and \ref{subsec:iso-bimod}.
Then we can use statements obtained 
in \cite{BS2} and \cite{BS3} 
as we explain below.
Hereby we note that these results work, mutatis mutandis, 
in the generic case as well. \makeautorefname{subsection}{Section}

We have 
$\twoHom_{\F[a]}(u,v)
\cong\twoHom_{\F[a]}(\oneinsert{2\omega_{\ell}},\clapping(u)\clapping(v)^*)\langle(\vec{k})\rangle$
as graded, free $\ring$-modules,
cf. the proof of \fullref{proposition:cats-are-equal}. Thus, using the cup foam 
basis and the translation to the side of $\ArcalgS[a]$ 
from \fullref{lemma:matchvs}, 
the (graded) rank of $\twoHom_{\F[a]}(u,v)$ is precisely 
given by all orientations of the composite matching for 
$\cc{\clapping(u)\clapping(v)^*}$ 
and their degrees. 
Thus, we have to show the same on the side of $\ArcalgS[\KBN]$:
\begin{gather}\label{eq:stupid-identity}
\begin{aligned}
\mathrm{rank}_{\ring}&\left(\Hom_{\ArcalgS[\KBN]}(\Mo[\KBN](\cc{u}),\Mo[\KBN](\cc{v})\langle s\rangle)\right)
\\
&
\overunder{\text{need to}}{\text{show}}{=}
\#\{\text{orientations of }\cc{\clapping(u)\clapping(v)^*}\text{ of degree }s\}.
\end{aligned}
\end{gather}
\subsubsection{The two cases in the proposition}\hfill\\
\noindent\textbf{Case \ref{enum:generic}.} 
Assume first that neither $u$ nor $v$ have internal circles. 
Then $\cc{u}$ and $\cc{v}$ fit into the framework  
from \cite[Section 4]{BS2}, i.e. \cite[Theorems 3.6 and 4.14]{BS2} show that
\begin{gather}\label{eq:stupid-identity-2}
\Mo[\KBN](\cc{u})
\text{ is indecomposable 
iff }\cc{u}\text{ does not contain internal circles.}
\end{gather}
It follows now from \cite[Theorem 3.5]{BS3} that \eqref{eq:stupid-identity} 
holds in case $\parto=0$ and $R=\C$. 
Scrutiny of the arguments used in \cite{BS2} and \cite{BS3} 
shows that these work under the circumstances of case \ref{enum:generic} as well. 
Next, if $C$ is any circle in $u$, and thus, in $\cc{u}$, then
\begin{gather}\label{eq:stupid-identity-3}
\Mo[\KBN](\cc{u})\cong\Mo[\KBN](\cc{u}{-}C)\langle+1\rangle\oplus\Mo[\KBN](\cc{u}{-}C)\langle-1\rangle,
\end{gather}
which follows as in \cite[Example 3.22]{EST}, similarly for $v$.
The right-hand side of \eqref{eq:stupid-identity} 
behaves in the same way, i.e. for 
$w=\cc{\clapping(u{-}C)\clapping(v)^*}$
we have
\begin{gather}\label{eq:stupid-identity-4}
\begin{gathered}
\#\{\text{orientations of }\cc{\clapping(u)\clapping(v)^*}\text{ of degree }s\}
\\
=
\#\{\text{orientations of }w\text{ of degree }s+1\}
+
\#\{\text{orientations of }w\text{ of degree }s-1\}.
\end{gathered}
\end{gather}
and similarly for $v$. The claim \eqref{eq:stupid-identity} follows in case \ref{enum:generic}.

\noindent\textbf{Case \ref{enum:semisimple}.} 
As before, it suffices to study the case where $u$ and $v$ do not have internal circles. 
In this case $\twoEnd_{\F[b]}(u)$ 
has a basis which locally looks like
\begin{gather}\label{eq:the-idempotents}
\xy
(0,0)*{\includegraphics[scale=.65]{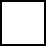}};
\endxy
\quad,\quad
\xy
(0,0)*{\includegraphics[scale=.65]{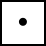}};
\endxy
\quad,\quad
\xy
(0,0)*{\includegraphics[scale=.65]{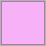}};
\endxy
\end{gather}
This can be shown by using the cup foam basis. 
Because of \eqref{eq:theusualrelations1}--\eqref{eq:theusualrelations2}, we can ignore the phantom parts of any foam $f\in\twoEnd_{\F[a]}(u)$. A direct calculation shows that 
\begin{gather}\label{eq:stupid-identity-5}
e_+=\neatfrac{1}{2}\left(\,
\xy
(0,0)*{\includegraphics[scale=.65]{figs/fig4-50.pdf}};
\endxy
+\sqrt{\specS(\parto)^{-1}}\cdot
\xy
(0,0)*{\includegraphics[scale=.65]{figs/fig4-51.pdf}};
\endxy
\,
\right)
\quad\text{and}\quad
e_-=\neatfrac{1}{2}\left(\,
\xy
(0,0)*{\includegraphics[scale=.65]{figs/fig4-50.pdf}};
\endxy
-\sqrt{\specS(\parto)^{-1}}\cdot
\xy
(0,0)*{\includegraphics[scale=.65]{figs/fig4-51.pdf}};
\endxy
\,
\right)
\end{gather}
are idempotents satisfying $e_+e_-=0=e_-e_+$ and $1=e_++e_-$. 
If $u$ has $c$ connected components---ignoring phantom edges, but 
counting both adjacent usual edges---then
\begin{gather}\label{eq:stupid-identity-6}
E=\{\vec{e}=(e_{1},\dots,e_{2^c})\mid e_i=e_{\pm}, i=1,\dots,2^c\}\setminus\{0\}
\end{gather}
(some $\vec{e}\,$'s might be zero, see below)
gives a complete set of pairwise orthogonal 
idempotents in $\twoEnd_{\F[b]}(u)$. 
Here the idempotents $\vec{e}$ 
are obtained 
by spreading the idempotents $e_+$ and $e_-$ 
locally around a trivalent vertex as 
follows:
\begin{gather}\label{eq:funny-idempotents}
\parsign=1\colon
\xy
(0,0)*{\includegraphics[scale=.65]{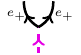}};
\endxy
\quad
\text{and}
\quad
\xy
(0,0)*{\includegraphics[scale=.65]{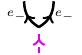}};
\endxy
\quad
,
\quad
\parsign=-1\colon
\xy
(0,0)*{\includegraphics[scale=.65]{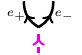}};
\endxy
\quad
\text{and}
\quad
\xy
(0,0)*{\includegraphics[scale=.65]{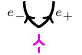}};
\endxy
\end{gather}
These are idempotents as one easily checks, while the other possible 
combinations give zero. 
This shows that, with $w=\clapping(u)\clapping(v)^*$ 
being the clapped web,
\begin{gather}\label{eq:match-some-idempotent-homs}
\begin{aligned}
\mathrm{rank}_{\ring}(\twoHom_{\F[b]}(u,v))
=\#\{\text{non-zero }\text{``colorings'' of }w\text{ with the }e_{\pm}\text{'s}\}.
\end{aligned}
\end{gather}
Using the idempotents $E$ one shows as in \cite[Proposition 3.13]{MPT}
that $\webalg[b]_{\vec{k}}$ 
is semisimple for all $\vec{k}\in\bY$. 

A web bimodule $\M[b](u)$ 
for $u$ having $c$ connected components decomposes 
into pairwise non-isomorphic copies of $\ring$, i.e.
$\M[b](u)\cong {\textstyle\bigoplus_{\vec{e}\in E}}\; \vec{e}\,\M[b](u)\vec{e}$.
The claim follows, since---for 
$w=\clapping(u)\clapping(v)^*$---we get
that the $\ring$-rank of $\Hom_{\webalg[b]}(\M[b](u),\M[b](v))$ 
equals the number of non-zero ``colorings'' of $w$ with the $e_{\pm}$'s.
By \eqref{eq:match-some-idempotent-homs}, 
$\mathrm{rank}_{\ring}(\twoHom_{\F[b]}(u,v))
=\mathrm{rank}_{\ring}(\Hom_{\webalg[b]}(\M[b](u),\M[b](v)))$.

\noindent Now, \eqref{eq:stupid-identity} is verified and thus \fullref{proposition:cats-are-equal-yes} is proven.
\subsection{Proof of \texorpdfstring{\fullref{proposition:its-an-invariant!}}{Proposition \ref{proposition:its-an-invariant!}}}\label{subsec:proof-five}
Let us start with the proof in the upwards oriented 
situation $\Hgen{{{}_-}}$ since---as we will see below---it is slightly more involved. 
Afterwards we can be short in case of $\Hgenpm{{{}_-}}$.
\subsubsection{The two tasks}
Note that  
composition of tangles corresponds, by construction, to 
tensoring of bimodule complexes. 
(The careful reader might want to 
copy \cite[Proposition 13]{Khov} to see this.)
Thus, there are two things to show: 
we have to show that 
$\Hgen{{{}_-}}$ does not depend on our choice of the map 
$\web({}_-)$, 
and we have to show invariance 
under the tangle Reidemeister moves from \fullref{definition:tanglecat}. 
\subsubsection{Independence of choice}  
In case of $\Hgenpm{{{}_-}}$ this task is empty, since there are no choices involved in its definition.
Given some ${}_{\vec{s}}\overline{T}_{\vec{t}}$. 
Assume that we have two different choices $\web_1({}_{\vec{s}}\overline{T}_{\vec{t}})$ and
$\web_2({}_{\vec{s}}\overline{T}_{\vec{t}})$.
To show independence we have to prove that
\begin{gather}\label{eq:new-stuff-1}
\Hgen{\web_1({}_{\vec{s}}\overline{T}_{\vec{t}})}=
\Hgen{\web_2({}_{\vec{s}}\overline{T}_{\vec{t}})}
\text{ as complexes in }\Komh(\webcatcirc),
\end{gather}
Now, two different choices $\web_1({}_{\vec{s}}\overline{T}_{\vec{t}})$ and 
$\web_2({}_{\vec{s}}\overline{T}_{\vec{t}})$ can only 
differ by the following local moves: 
The \textit{ordinary-ordinary-phantom} \textbf{R3} \textit{moves} 
(similarly for a negative crossing)
\begin{gather}\label{eq:move-to-show1}
\xy
(0,0)*{\includegraphics[scale=.65]{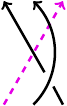}};
\endxy
\,\sim\,
\xy
(0,0)*{\includegraphics[scale=.65]{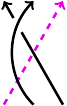}};
\endxy
\quad,\quad
\xy
(0,0)*{\includegraphics[scale=.65]{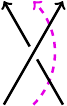}};
\endxy
\,\sim\,
\xy
(0,0)*{\includegraphics[scale=.65]{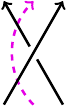}};
\endxy
\quad,\quad
\xy
(0,0)*{\includegraphics[scale=.65]{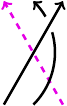}};
\endxy
\,\sim\,
\xy
(0,0)*{\includegraphics[scale=.65]{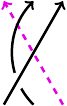}};
\endxy
\end{gather}
The \textit{ordinary-phantom} \textbf{R2} \textit{moves} 
and the \textit{pure-phantom}
\textbf{R2} \textit{move}
\begin{gather}\label{eq:move-to-show2}
\xy
(0,0)*{\includegraphics[scale=.65]{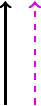}};
\endxy
\,\sim\,
\xy
(0,0)*{\includegraphics[scale=.65]{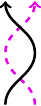}};
\endxy
\quad,\quad
\xy
(0,0)*{\includegraphics[scale=.65]{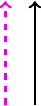}};
\endxy
\,\sim\,
\xy
(0,0)*{\includegraphics[scale=.65]{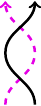}};
\endxy
\quad,\quad
\xy
(0,0)*{\includegraphics[scale=.65]{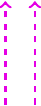}};
\endxy
\,\sim\,
\xy
(0,0)*{\includegraphics[scale=.65]{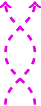}};
\endxy
\end{gather}
Third, the \textit{ordinary-phantom-phantom} 
\textbf{R3} \textit{moves} and the \textit{pure-phantom} 
\textbf{R3} \textit{move}
\begin{gather}\label{eq:move-to-show3}
\xy
(0,0)*{\includegraphics[scale=.65]{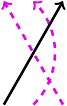}};
\endxy
\sim
\xy
(0,0)*{\includegraphics[scale=.65]{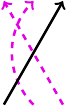}};
\endxy
\quad,\quad
\xy
(0,0)*{\includegraphics[scale=.65]{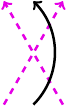}};
\endxy
\sim
\xy
(0,0)*{\includegraphics[scale=.65]{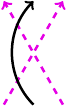}};
\endxy
\quad,\quad
\xy
(0,0)*{\includegraphics[scale=.65]{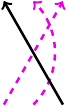}};
\endxy
\sim
\xy
(0,0)*{\includegraphics[scale=.65]{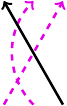}};
\endxy
\quad,\quad
\xy
(0,0)*{\includegraphics[scale=.65]{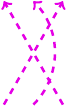}};
\endxy
\sim
\xy
(0,0)*{\includegraphics[scale=.65]{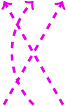}};
\endxy
\end{gather}
Thus, it suffices to show that the chain complex stays the 
same---up to chain homotopy---if the two choices 
$\web_1({}_{\vec{s}}\overline{T}_{\vec{t}})$ and 
$\web_2({}_{\vec{s}}\overline{T}_{\vec{t}})$ differ by one of these moves. 
For this purpose, we have by \cite[Lemmas 4.4 and 4.5]{EST} 
(which work for $\parameter$ as well)
that the chain groups 
are isomorphic. (The slogan is ``isotopic webs give isomorphic 
web bimodules). 
These isomorphisms are given by the evident foams, 
which are clearly $\webalg^\based[\parameter]$-bimodule homomorphisms. 
Thus, it remains to show that these commute with 
the differentials 
in the associated complexes. 
This is clear for the moves \eqref{eq:move-to-show2} 
or \eqref{eq:move-to-show3}. For  \eqref{eq:move-to-show1} 
one easily checks that these isomorphisms 
commute with 
the differentials. Indeed, via clapping one sees 
that the corresponding hom-spaces between the two sides are free 
of rank one in the appropriated degree. Thus, since the 
chain isomorphism are given by foams which are identities on
the ordinary parts, everything has to work out 
up to units. The involved complexes are 
however of length two, and so the foams can be scaled appropriately.
\subsubsection{Tangle Reidemeister moves (upwards case)}
Denote by 
${}_{\vec{s}}\overline{T}_{\vec{t}}$ 
and ${}_{\vec{s}}\overline{\mathcal{T}}_{\vec{t}}$ 
two tangle diagrams that differ by 
one of the moves \eqref{enum:reide1}--\eqref{enum:reide3}. Again, 
if we show that
\begin{gather}\label{eq:new-stuff-2}
\Hgen{{}_{\vec{s}}\overline{T}_{\vec{t}}}=\Hgen{{}_{\vec{s}}\overline{\mathcal{T}}_{\vec{t}}}
\text{ as complexes in }\Komh(\webcatcirc),
\end{gather}
then we are done. The main point is invariance 
under a move from \eqref{enum:reide2} or \eqref{enum:reide3}. Indeed, invariance 
under a move from \eqref{enum:reide1} 
follows as above, 
because e.g. we can by the above assume that a zigzag move looks 
locally as in \fullref{example:from-tangles-to-webs} 
and then use the same arguments as before (``isotopic webs give isomorphic web bimodules''). 
Hence, it remains 
to show invariance under the moves \eqref{enum:reide2} 
and \eqref{enum:reide3}, i.e. we have to 
check the following, together with variations of these:
\begin{gather}\label{eq:wrong-r2}
\xy
(0,0)*{
\xy
(0,0)*{\includegraphics[scale=.65]{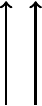}};
\endxy
\,
\stackrel{\boldsymbol{\mathrm{R2}}}{=}
\,
\xy
(0,0)*{\includegraphics[scale=.65]{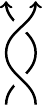}};
\endxy
\quad,\quad
\xy
(0,0)*{\includegraphics[scale=.65]{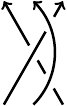}};
\endxy
\,
\stackrel{\boldsymbol{\mathrm{R3}}}{=}
\,
\xy
(0,0)*{\includegraphics[scale=.65]{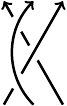}};
\endxy};
(0,-8)*{\text{\tiny braid-like}};
\endxy
\quad;\quad
\xy
(0,0)*{
\xy
(0,0)*{\includegraphics[scale=.65]{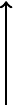}};
\endxy
\,
\stackrel{\boldsymbol{\mathrm{R1}}}{=}
\,
\xy
(0,0)*{\includegraphics[scale=.65]{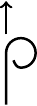}};
\endxy
\quad,\quad
\xy
(0,0)*{\reflectbox{\includegraphics[scale=.65]{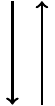}}};
\endxy
\,
\stackrel{\boldsymbol{\mathrm{mR2}}}{=}
\,
\xy
(0,0)*{\reflectbox{\includegraphics[scale=.65]{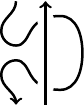}}};
\endxy};
(0,-8)*{\text{\tiny non-braid-like}};
\endxy
\end{gather}

We start with the \textit{braid-like moves}, which are 
much easier than the \textit{non-braid-like moves}. For these we can basically copy the usual 
(known) homotopies.

\begin{enumerate}[label=(\roman*)]

\item \textbf{R2} \textit{move, both versions.} We use the same 
cobordisms and coefficients 
as in \cite[Sections 3.4 and 3.5]{Bla}.

\item \textbf{R3} \textit{move, all versions.} This can be showed 
using the usual (abstract) Gauss elimination argument, 
as pioneered in \cite{BN2}. To be precise, 
one uses the $\parameter$-analog of \cite[Lemma 4.3]{EST}---the ``circle 
removal''---and then twice the Gauss elimination from \cite[Lemma 3.2]{BN2}. 
One obtains that the two complexes for both sides of the \textbf{R3} move 
have isomorphic chain groups. These can then be matched directly. We leave the details 
to the reader, where we note that all appearing coefficients are trivial, 
because the ``complicated'' maps in the Gauss elimination 
are at extremal parts of the complexes.

\item \textbf{R1} \textit{move, right curl with a 
positive or negative crossing.} 
In the setup of upwards oriented webs, 
this move is quite different from the one in e.g. \cite{Bla}. Thus, 
we give the chain maps and homotopies explicitly (omitting the obvious shifts). 
First, we want  chain maps to establish the first Reidemeister move for the positive crossing:
\begin{equation}\label{eq:new-stuff-3}
\begin{tikzpicture}[anchorbase]
\matrix (m) [matrix of math nodes, row sep=4em, column
sep=3em, text height=1.8ex, text depth=0.25ex] {
\Hgen{
	\xy
	(0,0)*{\includegraphics[scale=.65]{figs/fig5-57.pdf}};
	\endxy
}
=
&
\phantom{.}
\xy
(0,0)*{\includegraphics[scale=.65]{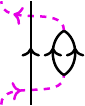}};
\endxy
&
\phantom{.}
\xy
(0,0)*{\includegraphics[scale=.65]{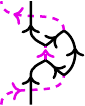}};
\endxy
\\
\phantom{aa}
\Hgen{
	\xy
	(0,0)*{\;\includegraphics[scale=.65]{figs/fig5-56.pdf}};
	\endxy
}
=
&\phantom{.}
\xy
(0,0)*{\includegraphics[scale=.65]{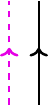}};
\endxy
&
0
\\
};
\path[<-,blue] ($(m-1-2.east) + (0,.065)$) edge node[above] {\text{\tiny$h$}} ($(m-1-3.west) + (.2,.065)$);
\path[<-] ($(m-1-3.west) + (.2,-.065)$) edge node[below] {\text{\tiny$d$}} ($(m-1-2.east) + (0,-.065)$);
\path[->, mycolor] ($(m-1-2.south) + (-.1,-.5)$) edge ($(m-1-2.south) + (-.1,-.95)$);
\path[<-, mycolor] ($(m-1-2.south) + (.1,-.5)$) edge ($(m-1-2.south) + (.1,-.95)$);
\node[mycolor] at ($(m-1-2.south) + (-.3,-.725)$) {\text{\tiny$f$}};
\node[mycolor] at ($(m-1-2.south) + (.3,-.725)$) {\text{\tiny$g$}};
\path[->] (m-2-2) edge (m-2-3);
\end{tikzpicture}
\end{equation}
\vspace*{0.25cm}
The three maps $f$, $g$ and $h$ are given by the following scaled maps of cobordisms:
\begin{gather}\label{eq:new-stuff-4}
\xymatrix@C2.0cm{
\text{$\phantom{.}$}
\xy
(0,0)*{\includegraphics[scale=0.75]{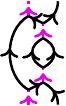}};
(0,-8)*{\text{\tiny\text{start }$f$}};
(0,8)*{\text{\tiny\text{end }$g$}};
\endxy
\ar@<-2pt>@[mycolor][rrr]_{\xy
	(0,0)*{\includegraphics[scale=0.325]{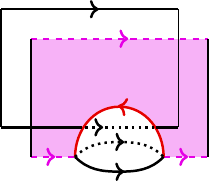}};
	\endxy}
\ar@{.>}@<-27pt>@[mycolor]@/_.52cm/[rrrr]|{\,{\color{mycolor}\parep^2\parom^{-1}\parome^{-2}}\,}
& & &
\text{$\phantom{.}$}
\xy
(0,0)*{\includegraphics[scale=0.75]{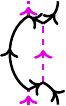}};
\endxy
\ar@<-2pt>@[mycolor][r]_{\xy
	(0,0)*{\includegraphics[scale=0.325]{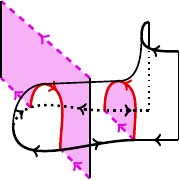}};
	\endxy}
\ar@<-2pt>@[mycolor][lll]_{
	\xy
	(0,0)*{\includegraphics[scale=0.325]{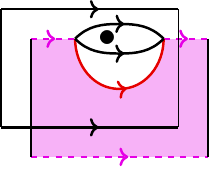}};
	\endxy
	-\parom^{-1}\parome
	\xy
	(0,0)*{\includegraphics[scale=0.325]{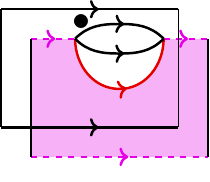}};
	\endxy
	-\parom^{-1}\parome\parha
	\xy
	(0,0)*{\includegraphics[scale=0.325]{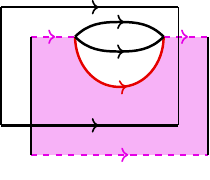}};
	\endxy
}
&
\text{$\phantom{.}$}
\xy
(0,0)*{\includegraphics[scale=0.75]{figs/fig5-rm1-check1.pdf}};
(0,-8)*{\text{\tiny\text{end }$f$}};
(0,8)*{\text{\tiny\text{start }$g$}};
\endxy
\ar@<-2pt>@[mycolor][l]_{\xy
	(0,0)*{\includegraphics[scale=0.325]{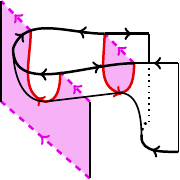}};
	\endxy}
}
\end{gather}
\begin{gather}\label{eq:new-stuff-5}
\xymatrix@C2cm{
\text{$\phantom{.}$}
\xy
(0,0)*{\includegraphics[scale=0.75]{figs/fig5-rm1-check5.pdf}};
(0,-8)*{\text{\tiny\text{end }$h$}};
\endxy
&
\text{$\phantom{.}$}
\xy
(0,0)*{\includegraphics[scale=0.75]{figs/fig5-rm1-check4.pdf}};
\endxy
\ar@[myblue][l]|{
	\;
	\xy
	(0,0)*{\includegraphics[scale=0.325]{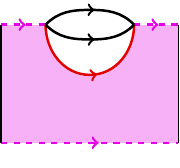}};
	\endxy\;}
&
\text{$\phantom{.}$}
\xy
(0,0)*{\includegraphics[scale=0.75]{figs/fig5-rm1-check5.pdf}};
\endxy
\ar@[myblue][l]|{
	\;
	\xy
	(0,0)*{\includegraphics[scale=0.325]{figs/fig2-61.pdf}};
	\endxy\;}
&
\text{$\phantom{.}$}
\xy
(0,0)*{\includegraphics[scale=0.75]{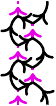}};
(0,-8)*{\text{\tiny\text{start }$h$}};
\endxy
\ar@[myblue][l]|{
	\;
	\xy
	(0,0)*{\includegraphics[scale=0.325]{figs/fig2-60.pdf}};
	\endxy\;}
\ar@{.>}@<-15pt>@[myblue]@/_.3cm/[lll]|{\,{\color{myblue}\parep\parom^{-2}}\,}
}
\end{gather}
(The dotted arrows indicate a scaling.) One checks now that $d\circ g=0$, $f\circ g=\mathrm{id}$, 
$\mathrm{id} - g\circ f = h\circ d$ and $d\circ h=\mathrm{id}$.
Similarly for the negative crossing, but with the roles of  $f$ and $g$ exchanged.

\item \textbf{R1} \textit{move, left curl with a 
positive or negative crossing.} Similarly as for the right curl, 
but exchanging the roles of $\parom$ and $\parome$.

\item \textbf{mR2} \textit{move, version in \eqref{eq:wrong-r2}.} 
As in case of \textbf{R1} we give the chain maps and homotopies explicitly:
\begin{equation}\label{eq:new-stuff-11}
\begin{tikzpicture}[anchorbase]
\matrix (m) [matrix of math nodes, row sep=4em, column
sep=3em, text height=1.8ex, text depth=0.25ex] {
\Hgen{
	\xy
	(0,0)*{\reflectbox{\includegraphics[scale=.65]{figs/fig5-mrm0a.pdf}}};
	\endxy
}
=
&
\raisebox{.05cm}{\xy
	(0,0)*{\reflectbox{\includegraphics[scale=.65]{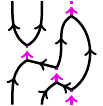}}};
	\endxy}
&
\raisebox{.05cm}{\xy
	(0,0)*{\reflectbox{\includegraphics[scale=.65]{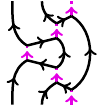}}};
	\endxy}
\oplus
\xy
(0,0)*{\reflectbox{\includegraphics[scale=.65]{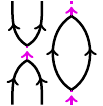}}};
\endxy
&
\raisebox{.05cm}{\xy
	(0,0)*{\reflectbox{\includegraphics[scale=.65]{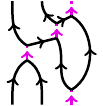}}};
	\endxy}
\\
\phantom{aa}
\raisebox{-.1cm}{$\Hgen{
		\xy
		(0,0)*{\reflectbox{\includegraphics[scale=.65]{figs/fig5-mrm0b.pdf}}};
		\endxy
	}$}
=
&
0
&
\xy
(0,0)*{\reflectbox{\includegraphics[scale=.65]{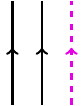}}};
\endxy
&
0
\\
};
\path[->] (m-1-2) edge node[below] {\text{\tiny$\begin{pmatrix}d_1 \\ d_2 \end{pmatrix}$}} (m-1-3);
\path[->] (m-1-3) edge node[below] {\text{\tiny$\begin{pmatrix}d_3, -d_4 \end{pmatrix}$}} (m-1-4);
\path[<-, mycolor] ($(m-1-3.south) + (-.7,-.5)$) edge ($(m-2-3.north) + (-.45,.5)$);
\path[<-, mycolor] ($(m-2-3.north) + (-.65,.5)$) edge ($(m-1-3.south) + (-.9,-.5)$);
\node[mycolor] at ($(m-2-3.north) + (-.95,.7)$) {\text{\tiny$f_1$}};
\node[mycolor] at ($(m-2-3.north) + (-.35,.7)$) {\text{\tiny$f_2$}};
\path[->] ($(m-1-3.south) + (.7,-.5)$) edge ($(m-2-3.north) + (.45,.5)$);
\path[->] ($(m-2-3.north) + (.65,.5)$) edge ($(m-1-3.south) + (.9,-.5)$);
\node at ($(m-2-3.north) + (1,.7)$) {\text{\tiny$g_2$}};
\node at ($(m-2-3.north) + (.325,.7)$) {\text{\tiny$g_1$}};
\draw[mygreen, ->] ($(m-1-4.north) + (-.25,.5)$) to  [out=158, in=22] node[above] {
\text{\tiny$h_2$}} ($(m-1-3.north) + (1.25,.5)$);
\draw[myblue, ->] ($(m-1-3.north) + (.75,.5)$) to [out=169, in=11] node[above] {
\text{\tiny$h_1$}} ($(m-1-2.north) + (.25,.5)$);
\path[->] (m-2-2) edge (m-2-3);
\path[->] (m-2-3) edge (m-2-4);
\end{tikzpicture}
\end{equation}
\vspace*{.25cm}
\begin{gather}\label{eq:new-stuff-12}
\xymatrix@C12mm{
\xy
(0,0)*{\reflectbox{\includegraphics[scale=0.75]{figs/fig5-mrm2.pdf}}};
(0,-8)*{\text{\tiny\text{start/end }$f$'s}};
\endxy
\ar@<2pt>@[mycolor][r]^{\xy
	(0,0)*{\includegraphics[scale=0.325]{figs/fig5-squeeze1.pdf}};
	\endxy}
&
\xy
(0,0)*{\reflectbox{\includegraphics[scale=0.75]{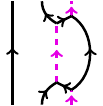}}};
\endxy
\ar@<2pt>@[mycolor][r]^{\xy
	(0,0)*{\includegraphics[scale=0.325]{figs/fig5-squeeze1.pdf}};
	\endxy}
\ar@<2pt>@[mycolor][l]^{\xy
	(0,0)*{\includegraphics[scale=0.325]{figs/fig5-squeeze2.pdf}};
	\endxy}
&
\xy
(0,0)*{\reflectbox{\includegraphics[scale=0.75]{figs/fig5-mrm-equi2.pdf}}};
(0,-8)*{\text{\tiny\text{end/start }$f$'s}};
(0,8)*{\text{\tiny\text{end/start }$g$'s}};
\endxy
\ar@<-2pt>[r]_{\xy
	(0,0)*{\includegraphics[scale=0.325]{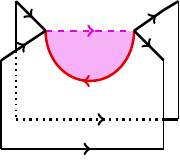}};
	\endxy}
\ar@<2pt>@[mycolor][l]^{\xy
	(0,0)*{\includegraphics[scale=0.325]{figs/fig5-squeeze2.pdf}};
	\endxy}
\ar@{.>}@<25pt>@[mycolor]@/^.5cm/[ll]|{\,{\color{mycolor}\parep^{-2}\parome^{4}}\,}
&
\xy
(0,0)*{\reflectbox{\includegraphics[scale=0.75]{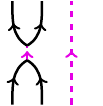}}};
\endxy
\ar@<-2pt>[r]_{\xy
	(0,0)*{\includegraphics[scale=0.325]{figs/fig2-62.pdf}};
	\endxy}
\ar@<-2pt>[l]_{\xy
	(0,0)*{\includegraphics[scale=0.325]{figs/fig2-60.pdf}};
	\endxy}
&
\xy
(0,0)*{\reflectbox{\includegraphics[scale=0.75]{figs/fig5-mrm3.pdf}}};
(0,8)*{\text{\tiny\text{start/end }$g$'s}};
\endxy
\ar@<-2pt>[l]_{\xy
	(0,0)*{\includegraphics[scale=0.325]{figs/fig2-61.pdf}};
	\endxy}
\ar@{.>}@<-25pt>@/^-.5cm/[ll]|{\,-\parep^{-2}\parome^{4}\,}
}
\end{gather}
\begin{gather}\label{eq:new-stuff-13}
\xymatrix@C12mm{
\xy
(0,0)*{\reflectbox{\includegraphics[scale=0.75]{figs/fig5-mrm4.pdf}}};
(0,-8)*{\text{\tiny\text{end }$h_1$}};
\endxy
&
\xy
(0,0)*{\reflectbox{\includegraphics[scale=0.75]{figs/fig5-mrm-equi3.pdf}}};
\endxy
\ar@[myblue][l]^{\xy
	(0,0)*{\includegraphics[scale=0.325]{figs/fig5-squeeze2.pdf}};
	\endxy}
&
\xy
(0,0)*{\reflectbox{\includegraphics[scale=0.75]{figs/fig5-mrm3.pdf}}};
(0,-8)*{\text{\tiny\text{start }$h_1$}};
(0,8)*{\text{\tiny\text{end }$h_2$}};
\endxy
\ar@[myblue][l]^{\xy
	(0,0)*{\includegraphics[scale=0.325]{figs/fig2-61.pdf}};
	\endxy}
\ar@{.>}@<25pt>@[myblue]@/^.5cm/[ll]|{{\color{myblue}\,\parep\parom^{-2}\parome^{3}\,}}
&
\xy
(0,0)*{\reflectbox{\includegraphics[scale=0.75]{figs/fig5-mrm-equi3.pdf}}};
\endxy
\ar@[mygreen][l]_{\xy
	(0,0)*{\includegraphics[scale=0.325]{figs/fig2-62.pdf}};
	\endxy}
&
\xy
(0,0)*{\reflectbox{\includegraphics[scale=0.75]{figs/fig5-mrm1.pdf}}};
(0,8)*{\text{\tiny\text{start }$h_2$}};
\endxy
\ar@[mygreen][l]_{\xy
	(0,0)*{\includegraphics[scale=0.325]{figs/fig5-squeeze1.pdf}};
	\endxy}
\ar@[mygreen]@{.>}@<-25pt>@/^-.5cm/[ll]|{{\color{mygreen}\,-\parep\parom^{-2}\parome^{3}\,}}
}
\end{gather}
The second illustration gives the chain maps $f_1,f_2,g_1,g_2$ 
and the third the non-trivial homotopies $h_1,h_2$---all of which are the composites 
of the displayed foams---as well as all their scalars. We leave it to the reader 
to verify that $f_1,f_2,g_1,g_2$ commute with the differentials 
$d_1,d_2,d_3,d_4$---i.e. 
$f_1\circ d_1+g_1\circ d_2=0$ and  
$d_3\circ f_2-d_4\circ g_2=0$---that $f_1,f_2$ are mutually inverses---i.e. 
$f_1\circ f_2=\mathrm{id}$ and $f_2\circ f_1=\mathrm{id}$---that $g_1\circ g_2=0$, 
$d_1\circ h_1 =- f_2\circ g_1$, $h_2\circ d_3 =- g_2\circ f_1$ and 
$d_2\circ h_1 - h_2\circ d_4=\mathrm{id}-g_2\circ g_1$.

\item \textbf{mR2} \textit{move, other versions.} As above but 
keeping in mind that some scalars depend on directions, 
e.g. the ones turning up for the squeeze \eqref{eq:squeezing}.
\end{enumerate} 

The arguments work for any specialization of $\parameter$.
\subsubsection{Tangle Reidemeister moves in general}
The case of $\Hgenpm{{{}_-}}$ can be proven as before with homotopies given as in the upwards oriented 
case but deleting the phantom facets which only 
show up due to the difference between $\web^\pm({}_-)$ and $\web({}_-)$. However, the scalars have to be adjusted as follows:
\begin{itemize}[leftmargin=.8in]

\item[(i)+(ii)] The braid-like moves have the same scalars.

\item[(iii)+(iv)] In case of \textbf{R1} (left and right), $f$
is scaled by $1$ instead of 
$\parep^2\parom^{-1}\parome^{-2}$, $g$ should get the 
same scalars, and $h$ gets $1$ 
instead of $\parep\parom^{-2}$.

\item[(v)+(vi)] In case of \textbf{mR2} (both versions), the only 
needed scalars are the minus signs.\qedhere

\end{itemize}
\subsection{Proof of \texorpdfstring{\fullref{theorem:functor-gl2}}{Theorem \ref{theorem:functor-gl2}}}\label{subsec:proof-six}
\subsubsection{The $2$-category $\twoTan$}
The underlying 
category of this $2$-category is $\Tan$ from \fullref{definition:tanglecat} (without relations) where 
we additionally fix an ordering of the crossings. 
Its 
$2$-morphisms---besides crossing 
reordering isomorphisms---can be seen as two-dimensional cobordisms embedded 
into $\R\times [-1,1]\times [-1,1]$ bounding a tangle at $\R\times [-1,1]\times \{-1\}$ 
and $\R\times [-1,1]\times \{1\}$. 
Categorical compositions are 
given by the evident (gluing) operations, see \cite[Chapters 1 and 2]{CS}.

As explained in \cite[Chapters 1 and 2]{CS}, the $2$-category 
$\twoTan$ admits a generator-relation 
presentation with generators given by certain 
basic cobordisms and the relations given by (oriented versions of)
the \textit{(Roseman--Carter--Saito) movie moves} \textbf{MM}, see 
e.g. \cite[Section 8]{BN1} whose numbering
we adapt. That is, \textbf{MM1} to \textbf{MM5} 
are the \textit{Reidemeister movie moves}, \textbf{MM6} to \textbf{MM10} 
are the \textit{reversible movie moves}, and 
\textbf{MM11} to \textbf{MM15} the \textit{non-reversible} ones. 
All of these come in various flavors depending on 
orientations and relative height of the strands, as well as 
ordering of the crossings.

The generating cobordisms come in two types (with $n=0,1,2$):

\begin{enumerate}[label=$\blacktriangleright$]

\renewcommand{\theenumi}{($\operatorname{Cob}$-A)}
\renewcommand{\labelenumi}{\theenumi}

\item \label{enum:cob-gen1} Morse type cobordisms attaching $n$-handles, i.e. caps, saddles or cups, cf. \eqref{eq:morse-foams}.

\renewcommand{\theenumi}{($\operatorname{Cob}$-B)}
\renewcommand{\labelenumi}{\theenumi}

\item \label{enum:cob-gen2} Cobordisms between different sides of Reidemeister moves
as in \eqref{eq:wrong-r2}.

\end{enumerate}
\subsubsection{The assignments}
Hence, we need to fix what to associate to these. 
For \ref{enum:cob-gen1} this is evident, cf. \eqref{eq:morse-foams}; to the Reidemeister cobordisms 
we associate the chain homotopies fixed in 
the proof of \fullref{theorem:functor-gl2}. 
Important hereby is that a \textit{non-braid like \textbf{R3} move}---with 
upwards and downwards pointing strands---can
be obtained from a braid like \textbf{R3} move, 
with \textbf{R2} and \textbf{mR2} moves as in \eqref{eq:wrong-r2}. 
However, there is 
a choice involved and we fix one, namely:
\begin{gather}\label{eq:last-proof-1}
\xy
(0,0)*{\includegraphics[scale=.65]{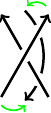}};
\endxy
\;\leftrightsquigarrow\;
\xy
(0,0)*{\includegraphics[scale=.65]{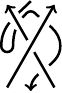}};
\endxy
\;\leftrightsquigarrow\;
\xy
(0,0)*{\includegraphics[scale=.65]{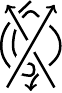}};
\endxy
\;\leftrightsquigarrow\;
\xy
(0,0)*{\includegraphics[scale=.65]{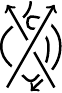}};
\endxy
\;\leftrightsquigarrow\;
\xy
(0,0)*{\includegraphics[scale=.65]{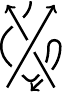}};
\endxy
\;\leftrightsquigarrow\;
\xy
(0,0)*{\includegraphics[scale=.65]{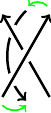}};
\endxy
\end{gather}
(This is the same choice ---and pictures---as in \cite[(3.5) and Example 3.21]{ETWe}. Important for us is  only that all such non-braid like \textbf{R3} moves 
consist of precisely two \textbf{mR2} moves on opposite sides of an 
``active strand''.)
\subsubsection{Reductions and the role of $\gltwo$}
W need to check that the movie moves \textbf{MM1} to \textbf{MM15} 
are preserved.
To this end, let us recall (some of) the ideas developed in \cite{ETWe} 
(which in turn are based on e.g. \cite{Bla}). The two 
main ideas therein can be summarized as follows:

\begin{enumerate}[label=$\blacktriangleright$]

\renewcommand{\theenumi}{(I)}
\renewcommand{\labelenumi}{\theenumi}

\item \label{enum:to-check1} It suffices to check functoriality in semisimple 
deformations.

\renewcommand{\theenumi}{(II)}
\renewcommand{\labelenumi}{\theenumi}

\item \label{enum:to-check2} For these deformations it is enough to check 
that the movie moves are preserved on so-called \textit{simple 
resolutions} only.

\end{enumerate}

In our setup \ref{enum:to-check1} means that we can set 
$\parto$ to $1$ and $\parha$ to $0$. This in turn means that we get the same idempotents 
as in the proof of \fullref{proposition:cats-are-equal-yes}, 
cf. \eqref{eq:the-idempotents}. These are elevant for (II). By writing out the idempotents, 
one checks that one has only the following 8 local colorings, 
four of which we call called \textit{admissible}:
\begin{gather}\label{eq:this-fails}
\xy
(0,0)*{
\xy
(0,0)*{\includegraphics[scale=1]{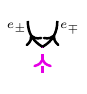}};
\endxy
\quad,\quad
\xy
(0,0)*{\includegraphics[scale=1]{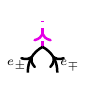}};
\endxy};
(0,-10)*{\text{\tiny admissible}};
\endxy
\quad,\quad
\xy
(0,0)*{
\reflectbox{\xy
(0,0)*{\includegraphics[scale=.65]{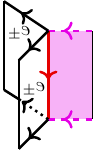}};
\endxy}
=
0\colon
\xy
(0,0)*{\includegraphics[scale=1]{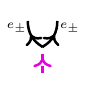}};
\endxy
\to
\xy
(0,0)*{\includegraphics[scale=1]{figs/fig6-func1b.pdf}};
\endxy};
(0,-10)*{\text{\tiny non-admissible}};
\endxy
\end{gather}
The non-admissible configurations are zero as the $\gltwo$ specialization swap the orthogonal idempotents $e_+$ and $e_-$ when moving over phantom edges, see 
\eqref{eq:slgl-parameters}. We 
abbreviate the $\gltwo$ specialization by $[\gltwo]$ in the following.

Following  \cite{BNM}, \cite[Section 3.2]{ETWe} we obtain a 
diagrammatic calculus 
for the Karoubi envelope $\Kar(\F^\pm[\gltwo])$ of 
the semisimplified foam $2$-category. 
In $\Kar(\F^\pm[\gltwo])$ ordinary edges of webs and 
ordinary facets of foams are labeled with the admissible idempotent
configurations. We have for instance the following four basic Morse type foams, all labeled 
with the idempotent $e_+$:
\begin{gather}\label{eq:morse-foams}
\xy
(0,0)*{\includegraphics[scale=.65]{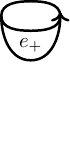}};
\endxy
\quad,\quad
\xy
(0,0)*{\includegraphics[scale=.65]{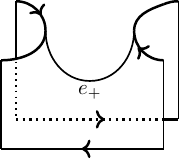}};
\endxy
\quad,\quad
\xy
(0,0)*{\includegraphics[scale=.65]{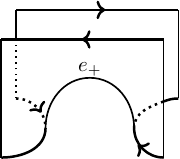}};
\endxy
\quad,\quad
\xy
(0,0)*{\includegraphics[scale=.65]{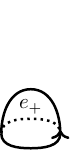}};
\endxy
\end{gather}
On the right we displayed examples of  relations for these foams which can easily be verified by writing out the idempotents.

\begin{gather}\label{eq:idem-rels}
\xy
(0,0)*{\includegraphics[scale=.4]{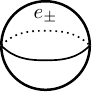}};
\endxy
\,=\,
\pm 1,
\quad\quad
\xy
(0,0)*{\includegraphics[scale=.4]{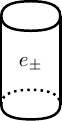}};
\endxy
\,=\,
\xy
(0,0)*{\includegraphics[scale=.4]{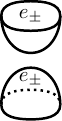}};
\endxy
\end{gather}

Let us denote the homotopy category of bounded complex with values in 
$\Kar(\F^\pm[\gltwo])$ by $\Komh(\Kar(\F^\pm[\gltwo]))$. Then---ignoring 
grading shifts since 
we are in the filtered only case---there are, cf. \cite[Figure 17]{Bla},
isomorphisms in $\Komh(\Kar(\F^\pm[\gltwo]))$ of the form
\begin{gather}\label{eq:last-proof-2}
\Hgenglpm{
\xy
(0,0)*{\includegraphics[scale=.65]{figs/fig5-9.pdf}};
\endxy
}
\!\!\!\!\cong\!\!
\underline{\xy
(0,0)*{\includegraphics[scale=.65]{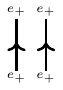}};
(0,6.25)*{\text{\tiny si.re.}};
(0,-5.25)*{\text{\tiny $\phantom{si.re.}$}};
\endxy
\oplus
\xy
(0,0)*{\includegraphics[scale=.65]{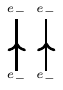}};
\endxy}
\xrightarrow{0}
\xy
(0,0)*{\includegraphics[scale=.65]{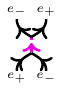}};
\endxy
\oplus
\xy
(0,0)*{\includegraphics[scale=.65]{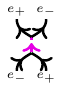}};
\endxy
\;\;\;,\;\;\;
\Hgenglpm{
\xy
(0,0)*{\includegraphics[scale=.65]{figs/fig5-10.pdf}};
\endxy
}
\!\!\!\!\cong\!\!
\xy
(0,0)*{\includegraphics[scale=.65]{figs/fig6-simple-crossing3.pdf}};
\endxy
\oplus
\xy
(0,0)*{\includegraphics[scale=.65]{figs/fig6-simple-crossing4.pdf}};
\endxy
\xrightarrow{0}
\underline{\xy
(0,0)*{\includegraphics[scale=.65]{figs/fig6-simple-crossing1.pdf}};
(0,6.25)*{\text{\tiny si.re.}};
(0,-5.25)*{\text{\tiny $\phantom{si.re.}$}};
\endxy
\oplus
\xy
(0,0)*{\includegraphics[scale=.65]{figs/fig6-simple-crossing2.pdf}};
\endxy}
\end{gather}
(The underlined part of the complex is 
in homological degree zero.) The resolutions, where all strands are colored by the idempotents 
$e_+$ are the \textit{simple resolutions}.

Then \ref{enum:to-check2} 
amounts to say that we can check if the movie moves hold on simple resolutions only. 
Thus, we can prove functoriality 
independent of the relative height of the involved strands 
and the orderings of the crossings, and only the relative 
orientation of the strands matters---which boils down the number of cases.
\subsubsection{Check of the movie moves}
For the Reidemeister movie moves 
\textbf{MM1} to \textbf{MM5}
we get the following simple resolutions. First the 
\textit{braid-like}:
\begin{gather}\label{eq:braid-like}
\xy
(0,0)*{\includegraphics[scale=.85]{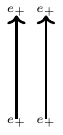}};
\endxy
\xrightarrow{\text{si.re.}}
\xy
(0,0)*{\includegraphics[scale=.85]{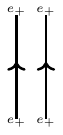}};
\endxy
\xleftarrow{\text{si.re.}}
\xy
(0,0)*{\includegraphics[scale=.85]{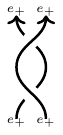}};
\endxy
\quad,\quad
\xy
(0,0)*{\includegraphics[scale=.85]{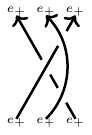}};
\endxy
\xrightarrow{\text{si.re.}}
\xy
(0,0)*{\includegraphics[scale=.85]{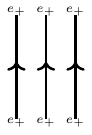}};
\endxy
\xleftarrow{\text{si.re.}}
\xy
(0,0)*{\includegraphics[scale=.85]{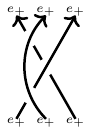}};
\endxy
\end{gather}
Next, the \textit{non-braid-like}: 
\begin{gather}\label{eq:non-braid-like}
\begin{gathered}
\raisebox{.019cm}{$\xy
(0,0)*{\includegraphics[scale=.80]{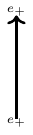}};
\endxy$}
\xrightarrow{\text{si.re.}}
\xy
(0,0)*{\includegraphics[scale=.80]{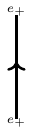}};
\endxy
\quad,\quad
\xy
(0,0)*{\includegraphics[scale=.80]{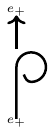}};
\endxy
\xrightarrow{\text{si.re.}}
\xy
(0,0)*{\includegraphics[scale=.80]{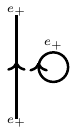}};
\endxy
\quad,\quad
\xy
(0,0)*{\includegraphics[scale=.80]{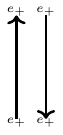}};
\endxy
\xrightarrow{\text{si.re.}}
\xy
(0,0)*{\includegraphics[scale=.80]{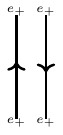}};
\endxy
\\
\raisebox{.017cm}{$
\xy
(0,0)*{\includegraphics[scale=.80]{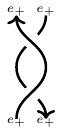}};
\endxy$}
\xrightarrow{\text{si.re.}}
\xy
(0,0)*{\includegraphics[scale=.80]{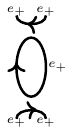}};
\endxy
\quad,\quad
\xy
(0,0)*{\includegraphics[scale=.80]{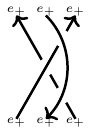}};
\endxy
\xrightarrow{\text{si.re.}}
\xy
(0,0)*{\includegraphics[scale=.80]{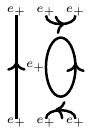}};
\endxy
\quad,\quad
\xy
(0,0)*{\includegraphics[scale=.80]{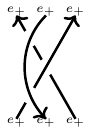}};
\endxy
\xrightarrow{\text{si.re.}}
\xy
(0,0)*{\includegraphics[scale=.80]{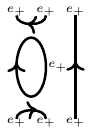}};
\endxy
\end{gathered}
\end{gather}
On this simple resolutions the chosen homotopies between the 
movie moves look as follows. First, the braid-like homotopies are 
all identities
\begin{gather}\label{eq:some-gather}
\raisebox{-.05cm}{$\begin{gathered}
\mathrm{MM}3a
\\
\mathrm{MM}4a
\end{gathered}$}
\colon\!\!\!
\xymatrix{
\xy
(0,0)*{\includegraphics[scale=.65]{figs/fig6-RM2-simple1b.pdf}};
\endxy
\ar@<2pt>[r]^{\text{id foam}}
&
\xy
(0,0)*{\includegraphics[scale=.65]{figs/fig6-RM2-simple1b.pdf}};
\endxy
\ar@<2pt>[r]^{\text{id foam}}
\ar@<2pt>[l]^{\text{id foam}}
&
\xy
(0,0)*{\includegraphics[scale=.65]{figs/fig6-RM2-simple1b.pdf}};
\endxy
\ar@<2pt>[l]^{\text{id foam}}
}
,\;
\mathrm{MM}5a\colon\!\!\!
\xymatrix{
\xy
(0,0)*{\includegraphics[scale=.65]{figs/fig6-RM3-simple1b.pdf}};
\endxy
\ar@<2pt>[r]^{\text{id foam}}
&
\xy
(0,0)*{\includegraphics[scale=.65]{figs/fig6-RM3-simple1b.pdf}};
\endxy
\ar@<2pt>[r]^{\text{id foam}}
\ar@<2pt>[l]^{\text{id foam}}
&
\xy
(0,0)*{\includegraphics[scale=.65]{figs/fig6-RM3-simple1b.pdf}};
\endxy
\ar@<2pt>[l]^{\text{id foam}}
}
\end{gather}
Checking that these compose to the identity---what we need to check 
in order for these movie moves to be satisfied---is trivial. The non-braid-like 
homotopies in the movies moves $\textbf{MM1}$ to $\textbf{MM5}$ 
are a bit ---although not much--- more involved:
\begin{gather}\label{eq:last-proof-11}
\textbf{MM1}\colon\!\!\!
\xymatrix@C1.1cm{
\xy
(0,0)*{\includegraphics[scale=.65]{figs/fig6-RM1-simple1b.pdf}};
\endxy \;
\ar@<2pt>[r]^{
\xy
(0,0)*{\includegraphics[scale=.35]{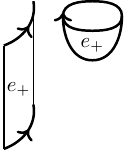}};
\endxy\hspace*{0.45cm}
}
&
\;
\xy
(0,0)*{\includegraphics[scale=.65]{figs/fig6-RM1-simple2b.pdf}};
\endxy \;
\ar@<2pt>[r]^{\hspace*{0.45cm}\xy
(0,0)*{\includegraphics[scale=.35]{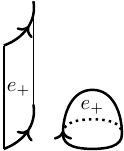}};
\endxy}
\ar@<2pt>[l]^{\xy
(0,0)*{\includegraphics[scale=.35]{figs/fig6-rm1-foam2.pdf}};
\endxy\hspace*{0.45cm}}
&
\;
\xy
(0,0)*{\includegraphics[scale=.65]{figs/fig6-RM1-simple1b.pdf}};
\endxy
\ar@<2pt>[l]^{\hspace*{0.45cm}\xy
(0,0)*{\includegraphics[scale=.35]{figs/fig6-rm1-foam1.pdf}};
\endxy}
}
,\;
\textbf{MM2}\colon\!\!\!
\xymatrix@C1.1cm{
\xy
(0,0)*{\includegraphics[scale=.65]{figs/fig6-RM1-simple2b.pdf}};
\endxy \;
\ar@<2pt>[r]^{\hspace*{0.45cm}
\xy
(0,0)*{\includegraphics[scale=.35]{figs/fig6-rm1-foam2.pdf}};
\endxy
}
&
\;
\xy
(0,0)*{\includegraphics[scale=.65]{figs/fig6-RM1-simple1b.pdf}};
\endxy \;
\ar@<2pt>[r]^{\xy
(0,0)*{\includegraphics[scale=.35]{figs/fig6-rm1-foam1.pdf}};
\endxy\hspace*{0.45cm}}
\ar@<2pt>[l]^{\hspace*{0.45cm}\xy
(0,0)*{\includegraphics[scale=.35]{figs/fig6-rm1-foam1.pdf}};
\endxy}
&
\; 
\xy
(0,0)*{\includegraphics[scale=.65]{figs/fig6-RM1-simple2b.pdf}};
\endxy
\ar@<2pt>[l]^{\xy
(0,0)*{\includegraphics[scale=.35]{figs/fig6-rm1-foam2.pdf}};
\endxy\hspace*{0.45cm}}
}\!,
\end{gather}
\vskip-.4em
\begin{gather}\label{eq:last-proof-12}
\textbf{MM3b}\colon\!\!\!\!\!
\xymatrix@C1.05cm{
\xy
(0,0)*{\includegraphics[scale=.65]{figs/fig6-RM2-simple3b.pdf}};
\endxy \;\;\;
\ar@<2pt>[r]^{
\xy
(0,0)*{\includegraphics[scale=.35]{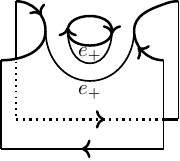}};
\endxy
}
&
\;\;\;\;
\xy
(0,0)*{\includegraphics[scale=.65]{figs/fig6-RM2-simple4b.pdf}};
\endxy
\;\;\;
\ar@<2pt>[r]^{\xy
(0,0)*{\includegraphics[scale=.35]{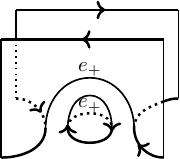}};
\endxy}
\ar@<2pt>[l]^{\xy
(0,0)*{\includegraphics[scale=.35]{figs/fig6-rm2-foam2.pdf}};
\endxy}
&
\;\;\;
\xy
(0,0)*{\includegraphics[scale=.65]{figs/fig6-RM2-simple3b.pdf}};
\endxy
\ar@<2pt>[l]^{\xy
(0,0)*{\includegraphics[scale=.35]{figs/fig6-rm2-foam1.pdf}};
\endxy}
}\!\!,
\end{gather}
\vskip-.7em
\begin{gather}\label{eq:last-proof-13}
\textbf{MM4b}\colon\!\!\!\!\!
\xymatrix@C1.05cm{
\xy
(0,0)*{\includegraphics[scale=.65]{figs/fig6-RM2-simple4b.pdf}};
\endxy\;\;\;
\ar@<2pt>[r]^{
\xy
(0,0)*{\includegraphics[scale=.35]{figs/fig6-rm2-foam2.pdf}};
\endxy
}
&
\;\;\;
\xy
(0,0)*{\includegraphics[scale=.65]{figs/fig6-RM2-simple3b.pdf}};
\endxy \;\;\;
\ar@<2pt>[r]^{\xy
(0,0)*{\includegraphics[scale=.35]{figs/fig6-rm2-foam1.pdf}};
\endxy}
\ar@<2pt>[l]^{\xy
(0,0)*{\includegraphics[scale=.35]{figs/fig6-rm2-foam1.pdf}};
\endxy}
&
\;\;\;
\xy
(0,0)*{\includegraphics[scale=.65]{figs/fig6-RM2-simple4b.pdf}};
\endxy
\ar@<2pt>[l]^{\xy
(0,0)*{\includegraphics[scale=.35]{figs/fig6-rm2-foam2.pdf}};
\endxy}
}
\!\!,
\end{gather}
\vskip-.9em
\begin{gather}\label{eq:last-proof-14}
\textbf{MM5b}\colon\!\!\!
\xymatrix@C1.07cm{
\xy
(0,0)*{\includegraphics[scale=.65]{figs/fig6-RM3-simple3b.pdf}};
\endxy
\ar@<2pt>[r]^{
f
}
&
\xy
(0,0)*{\includegraphics[scale=.65]{figs/fig6-RM3-simple4b.pdf}};
\endxy
\ar@<2pt>[r]^{g}
\ar@<2pt>[l]^{g}
&
\xy
(0,0)*{\includegraphics[scale=.65]{figs/fig6-RM3-simple3b.pdf}};
\endxy
\ar@<2pt>[l]^{f}
}
\end{gather}
with $f,g$ being ``monkey saddles'' as in \cite[Figure 9]{BN1}. 
Using \eqref{eq:idem-rels}, a visualization exercise 
shows that these also compose to the identities. In total, \textbf{MM1} to \textbf{MM5} are preserved.

That the remaining movie moves are preserved is not much harder to 
show---keeping in mind how easy the simple 
resolutions actually are, cf. \eqref{eq:braid-like} and \eqref{eq:non-braid-like}. It will
use only the relations in \eqref{eq:idem-rels}. Let us comment on two 
of them:
\begin{enumerate}

\item[\textbf{MM6}] There are essentially two versions, one of 
it contains only braid-like Reidemeister moves and is thus immediate. The other 
contains two \textbf{mR2} moves and a non-braid like 
\textbf{R3} move and is by far the hardest---but not really hard---move to be checked.

\item[\textbf{MM10}] By \cite[Proof of \textbf{MM10}]{CMW}, far-commutativity 
and \textbf{MM6} this move needs 
only be checked in its braid version and is therefore 
immediate.

\end{enumerate} 
The other movie moves hold in an almost trivial 
fashion and we omit these for brevity. 
Note that all involved 
foams are ordinary. This in turn ensures that 
working with either $\F^\pm[\gltwo]$ or with any further 
specializations does not affect functoriality.

\begin{remark} 
We like to finally point out, that  in the case of a $\sltwo$ choice of parameters, 
the ``admissibility trick'' as in \eqref{eq:this-fails} fails 
and one can find sign ambiguities exactly as 
Jacobsson did for Khovanov homology \cite{Jac}---noting again that 
everything in the semisimple deformation boils down to parameter free 
foam theories with only ordinary parts.
\end{remark}

\bibliographystyle{alphaurl}
\bibliography{generic-gl2}

\begin{thebibliography}{BHPW19}

\bibitem[BHMV95]{BHMV}
C.~Blanchet, N.~Habegger, G.~Masbaum, and P.~Vogel.
\newblock Topological quantum field theories derived from the {K}auffman
  bracket.
\newblock {\em Topology}, 34(4):883--927, 1995.
\newblock \href {https://doi.org/10.1016/0040-9383(94)00051-4}
  {\path{doi:10.1016/0040-9383(94)00051-4}}.

\bibitem[BHPW19]{BHPW}
A.~Beliakova, M.~Hogancamp, K.K. Putyra, and S.M. Wehrli.
\newblock On the functoriality of {$sl(2)$} tangle homology.
\newblock 2019.
\newblock URL: \url{https://arxiv.org/abs/1903.12194}.

\bibitem[Bla10]{Bla}
C.~Blanchet.
\newblock An oriented model for {K}hovanov homology.
\newblock {\em J. Knot Theory Ramifications}, 19(2):291--312, 2010.
\newblock URL: \url{http://arxiv.org/abs/1405.7246}, \href
  {https://doi.org/10.1142/S0218216510007863}
  {\path{doi:10.1142/S0218216510007863}}.

\bibitem[BN05]{BN1}
D.~Bar-Natan.
\newblock Khovanov's homology for tangles and cobordisms.
\newblock {\em Geom. Topol.}, 9:1443--1499, 2005.
\newblock URL: \url{http://arxiv.org/abs/math/0410495}, \href
  {https://doi.org/10.2140/gt.2005.9.1443} {\path{doi:10.2140/gt.2005.9.1443}}.

\bibitem[BN07]{BN2}
D.~Bar-Natan.
\newblock Fast {K}hovanov homology computations.
\newblock {\em J. Knot Theory Ramifications}, 16(3):243--255, 2007.
\newblock URL: \url{http://arxiv.org/abs/math/0606318}, \href
  {https://doi.org/10.1142/S0218216507005294}
  {\path{doi:10.1142/S0218216507005294}}.

\bibitem[BNM06]{BNM}
D.~Bar-Natan and S.~Morrison.
\newblock The {K}aroubi envelope and {L}ee's degeneration of {K}hovanov
  homology.
\newblock {\em Algebr. Geom. Topol.}, 6:1459--1469, 2006.
\newblock URL: \url{http://arxiv.org/abs/math/0606542}, \href
  {https://doi.org/10.2140/agt.2006.6.1459}
  {\path{doi:10.2140/agt.2006.6.1459}}.

\bibitem[BS10]{BS2}
J.~Brundan and C.~Stroppel.
\newblock Highest weight categories arising from {K}hovanov's diagram algebra
  {II}: {K}oszulity.
\newblock {\em Transform. Groups}, 15(1):1--45, 2010.
\newblock URL: \url{http://arxiv.org/abs/0806.3472}, \href
  {https://doi.org/10.1007/s00031-010-9079-4}
  {\path{doi:10.1007/s00031-010-9079-4}}.

\bibitem[BS11a]{BS1}
J.~Brundan and C.~Stroppel.
\newblock Highest weight categories arising from {K}hovanov's diagram algebra
  {I}: cellularity.
\newblock {\em Mosc. Math. J.}, 11(4):685--722, 821--822, 2011.
\newblock URL: \url{http://arxiv.org/abs/0806.1532}.

\bibitem[BS11b]{BS3}
J.~Brundan and C.~Stroppel.
\newblock Highest weight categories arising from {K}hovanov's diagram algebra
  {III}: category $\mathcal{O}$.
\newblock {\em Represent. Theory}, 15:170--243, 2011.
\newblock URL: \url{http://arxiv.org/abs/0812.1090}, \href
  {https://doi.org/10.1090/S1088-4165-2011-00389-7}
  {\path{doi:10.1090/S1088-4165-2011-00389-7}}.

\bibitem[Cap08]{Cap}
C.L. Caprau.
\newblock {$\mathfrak{sl}(2)$} tangle homology with a parameter and singular
  cobordisms.
\newblock {\em Algebr. Geom. Topol.}, 8(2):729--756, 2008.
\newblock URL: \url{http://arxiv.org/abs/0707.3051}, \href
  {https://doi.org/10.2140/agt.2008.8.729} {\path{doi:10.2140/agt.2008.8.729}}.

\bibitem[Cap09]{Cap2}
C.L. Caprau.
\newblock The universal {$\mathfrak{sl}(2)$} cohomology via webs and foams.
\newblock {\em Topology Appl.}, 156(9):1684--1702, 2009.
\newblock URL: \url{https://arxiv.org/abs/0802.2848}, \href
  {https://doi.org/10.1016/j.topol.2009.02.001}
  {\path{doi:10.1016/j.topol.2009.02.001}}.

\bibitem[CK14]{CK}
Y.~Chen and M.~Khovanov.
\newblock An invariant of tangle cobordisms via subquotients of arc rings.
\newblock {\em Fund. Math.}, 225(1):23--44, 2014.
\newblock URL: \url{http://arxiv.org/abs/math/0610054}, \href
  {https://doi.org/10.4064/fm225-1-2} {\path{doi:10.4064/fm225-1-2}}.

\bibitem[CMW09]{CMW}
D.~Clark, S.~Morrison, and K.~Walker.
\newblock Fixing the functoriality of {K}hovanov homology.
\newblock {\em Geom. Topol.}, 13(3):1499--1582, 2009.
\newblock URL: \url{http://arxiv.org/abs/math/0701339}, \href
  {https://doi.org/10.2140/gt.2009.13.1499}
  {\path{doi:10.2140/gt.2009.13.1499}}.

\bibitem[CS98]{CS}
J.S. Carter and M.~Saito.
\newblock {\em Knotted surfaces and their diagrams}, volume~55 of {\em
  Mathematical Surveys and Monographs}.
\newblock American Mathematical Society, Providence, RI, 1998.

\bibitem[EST17]{EST}
M.~Ehrig, C.~Stroppel, and D.~Tubbenhauer.
\newblock The {B}lanchet--{K}hovanov algebras.
\newblock In {\em Categorification and Higher Representation Theory}, volume
  683 of {\em Contemp. Math.}, pages 183--226. Amer. Math. Soc., Providence,
  RI, 2017.
\newblock URL: \url{http://arxiv.org/abs/1510.04884}, \href
  {https://doi.org/10.1090/conm/683} {\path{doi:10.1090/conm/683}}.

\bibitem[ETW18]{ETWe}
M.~Ehrig, D.~Tubbenhauer, and P.~Wedrich.
\newblock Functoriality of colored link homologies.
\newblock {\em Proc. Lond. Math. Soc. (3)}, 117(5):996--1040, 2018.
\newblock URL: \url{https://arxiv.org/pdf/1703.06691.pdf}, \href
  {https://doi.org/10.1112/plms.12154} {\path{doi:10.1112/plms.12154}}.

\bibitem[ETW19]{ETWi}
M.~Ehrig, D.~Tubbenhauer, and A.~Wilbert.
\newblock Singular {TQFT}s, foams and type {D} arc algebras.
\newblock {\em Doc. Math.}, 24:1585--1655, 2019.
\newblock URL: \url{http://arxiv.org/abs/1611.07444}, \href
  {https://doi.org/10.25537/dm.2019v24.1585-1655}
  {\path{doi:10.25537/dm.2019v24.1585-1655}}.

\bibitem[Jac04]{Jac}
M.~Jacobsson.
\newblock An invariant of link cobordisms from {K}hovanov homology.
\newblock {\em Algebr. Geom. Topol.}, 4:1211--1251 (electronic), 2004.
\newblock URL: \url{http://arxiv.org/abs/math/0206303}, \href
  {https://doi.org/10.2140/agt.2004.4.1211}
  {\path{doi:10.2140/agt.2004.4.1211}}.

\bibitem[Kas95]{Kas}
C.~Kassel.
\newblock {\em Quantum groups}, volume 155 of {\em Graduate Texts in
  Mathematics}.
\newblock Springer-Verlag, New York, 1995.
\newblock \href {https://doi.org/10.1007/978-1-4612-0783-2}
  {\path{doi:10.1007/978-1-4612-0783-2}}.

\bibitem[Kho00]{Kho}
M.~Khovanov.
\newblock A categorification of the {J}ones polynomial.
\newblock {\em Duke Math. J.}, 101(3):359--426, 2000.
\newblock URL: \url{http://arxiv.org/abs/math/9908171}, \href
  {https://doi.org/10.1215/S0012-7094-00-10131-7}
  {\path{doi:10.1215/S0012-7094-00-10131-7}}.

\bibitem[Kho02]{Khov}
M.~Khovanov.
\newblock A functor-valued invariant of tangles.
\newblock {\em Algebr. Geom. Topol.}, 2:665--741, 2002.
\newblock URL: \url{http://arxiv.org/abs/math/0103190}, \href
  {https://doi.org/10.2140/agt.2002.2.665} {\path{doi:10.2140/agt.2002.2.665}}.

\bibitem[Kho04]{Khova}
M.~Khovanov.
\newblock $\mathfrak{sl}(3)$ link homology.
\newblock {\em Algebr. Geom. Topol.}, 4:1045--1081, 2004.
\newblock URL: \url{http://arxiv.org/abs/math/0304375}, \href
  {https://doi.org/10.2140/agt.2004.4.1045}
  {\path{doi:10.2140/agt.2004.4.1045}}.

\bibitem[Kho20]{Kh20}
M.~Khovanov.
\newblock Decorated one-dimensional cobordisms and tensor envelopes of
  noncommutative recognizable power series.
\newblock 2020.
\newblock URL: \url{https://arxiv.org/abs/2010.05730}.

\bibitem[KK20]{KhKi}
M.~Khovanov and N.~Kitchloo.
\newblock A deformation of {R}obert--{W}agner foam evaluation and link
  homology.
\newblock 2020.
\newblock URL: \url{https://arxiv.org/abs/2004.14197}.

\bibitem[Koc04]{Kock}
J.~Kock.
\newblock {\em Frobenius algebras and 2{D} topological quantum field theories},
  volume~59 of {\em London Mathematical Society Student Texts}.
\newblock Cambridge University Press, 2004.

\bibitem[LQR15]{LQR}
A.D. Lauda, H.~Queffelec, and D.E.V. Rose.
\newblock Khovanov homology is a skew {H}owe $2$-representation of categorified
  quantum $\mathfrak{sl}(m)$.
\newblock {\em Algebr. Geom. Topol.}, 15(5):2517--2608, 2015.
\newblock URL: \url{http://arxiv.org/abs/1212.6076}, \href
  {https://doi.org/10.2140/agt.2015.15.2517}
  {\path{doi:10.2140/agt.2015.15.2517}}.

\bibitem[Mac14]{Ma-sln-web-algebras}
M.~Mackaay.
\newblock The {$\mathfrak{sl}_{N}$}-web algebras and dual canonical bases.
\newblock {\em J. Algebra}, 409:54--100, 2014.
\newblock URL: \url{https://arxiv.org/abs/1308.0566}, \href
  {https://doi.org/10.1016/j.jalgebra.2014.02.036}
  {\path{doi:10.1016/j.jalgebra.2014.02.036}}.

\bibitem[MN09]{MaNe}
C.~Manolescu and I.~Neithalath.
\newblock Skein lasagna modules for {$2$}-handlebodies.
\newblock 2009.
\newblock URL: \url{https://arxiv.org/abs/2009.08520}.

\bibitem[MOS09]{MaOvSt-koszul-functors}
V.~Mazorchuk, S.~Ovsienko, and C.~Stroppel.
\newblock Quadratic duals, {K}oszul dual functors, and applications.
\newblock {\em Trans. Amer. Math. Soc.}, 361(3):1129--1172, 2009.
\newblock URL: \url{https://arxiv.org/abs/math/0603475}, \href
  {https://doi.org/10.1090/S0002-9947-08-04539-X}
  {\path{doi:10.1090/S0002-9947-08-04539-X}}.

\bibitem[MPT14]{MPT}
M.~Mackaay, W.~Pan, and D.~Tubbenhauer.
\newblock The $\mathfrak{sl}_3$-web algebra.
\newblock {\em Math. Z.}, 277(1-2):401--479, 2014.
\newblock URL: \url{http://arxiv.org/abs/1206.2118}, \href
  {https://doi.org/10.1007/s00209-013-1262-6}
  {\path{doi:10.1007/s00209-013-1262-6}}.

\bibitem[NV18]{NaVa}
G.~Naisse and P.~Vaz.
\newblock Odd {K}hovanov's arc algebra.
\newblock {\em Fund. Math.}, 241(2):143--178, 2018.
\newblock URL: \url{https://arxiv.org/abs/1604.05246}, \href
  {https://doi.org/10.4064/fm328-6-2017} {\path{doi:10.4064/fm328-6-2017}}.

\bibitem[Rou08]{Rou}
R.~Rouquier.
\newblock $2$-{K}ac--{M}oody algebras.
\newblock 2008.
\newblock URL: \url{http://arxiv.org/abs/0812.5023}.

\bibitem[Str05]{Str}
C.~Stroppel.
\newblock Categorification of the {T}emperley--{L}ieb category, tangles, and
  cobordisms via projective functors.
\newblock {\em Duke Math. J.}, 126(3):547--596, 2005.
\newblock \href {https://doi.org/10.1215/S0012-7094-04-12634-X}
  {\path{doi:10.1215/S0012-7094-04-12634-X}}.

\bibitem[Str06]{Str-TQFTs}
C.~Stroppel.
\newblock {TQFT} with corners and tilting functors in the {K}ac--{M}oody case.
\newblock 2006.
\newblock URL: \url{https://arxiv.org/abs/math/0605103}.

\bibitem[Str09]{Str2}
C.~Stroppel.
\newblock Parabolic category {$\mathcal{O}$}, perverse sheaves on
  {G}rassmannians, {S}pringer fibres and {K}hovanov homology.
\newblock {\em Compos. Math.}, 145(4):954--992, 2009.
\newblock URL: \url{http://arxiv.org/abs/math/0608234}, \href
  {https://doi.org/10.1112/S0010437X09004035}
  {\path{doi:10.1112/S0010437X09004035}}.

\bibitem[Tub14a]{Tub}
D.~Tubbenhauer.
\newblock $\mathfrak{gl}_n$-webs, categorification and {K}hovanov--{R}ozansky
  homologies.
\newblock 2014.
\newblock To appear in J. Knot Theory Ramifications.
\newblock URL: \url{http://arxiv.org/abs/1404.5752}, \href
  {https://doi.org/10.1142/S0218216520500741}
  {\path{doi:10.1142/S0218216520500741}}.

\bibitem[Tub14b]{Tub1}
D.~Tubbenhauer.
\newblock $\mathfrak{sl}_3$-web bases, intermediate crystal bases and
  categorification.
\newblock {\em J. Algebraic Combin.}, 40(4):1001--1076, 2014.
\newblock URL: \url{http://arxiv.org/abs/1310.2779}, \href
  {https://doi.org/10.1007/s10801-014-0518-5}
  {\path{doi:10.1007/s10801-014-0518-5}}.

\bibitem[Vog20]{Vog}
P.~Vogel.
\newblock Functoriality of {K}hovanov homology.
\newblock {\em J. Knot Theory Ramifications}, 29(4):2050020, 66, 2020.
\newblock URL: \url{http://arxiv.org/abs/1505.04545}, \href
  {https://doi.org/10.1142/S0218216520500200}
  {\path{doi:10.1142/S0218216520500200}}.

\end{thebibliography}
\end{document}